\newcommand{\mysection}[1]{\section{#1}
      \setcounter{equation}{0}}
\newcommand\cbrk{\text{$]$\kern-.15em$]$}} 
\newcommand\opar{\text{\raise.2ex\hbox{${\scriptstyle | }$}\kern-.34em$($} }
\DeclareMathOperator*{\esssup}{ess\,sup}
\newtheorem{theorem}{Theorem}[section]
\newtheorem{lemma}[theorem]{Lemma}
\newtheorem{proposition}[theorem]{Proposition}
\newtheorem{corollary}[theorem]{Corollary}
\theoremstyle{definition}
\newtheorem{assumption}{Assumption}[section]
\newtheorem{definition}{Definition}[section]
\theoremstyle{remark}
\newtheorem{remark}{Remark}[section]
\newcommand{\F}{\mathcal{{F}}}
\newcommand{\R}{\mathbb{{R}}}
\newcommand\bB{\mathbb{B}}
\newcommand\bI{\mathbb{I}}
\newcommand\bL{\mathbb{L}}
\newcommand\bR{\mathbb{R}}
\newcommand\bN{\mathbb{N}}
\newcommand\bM{\mathbb{M}}
\newcommand\bS{\mathbb{S}}
\newcommand\bT{\mathbb{T}}
\newcommand\bW{\mathbb{W}}
\newcommand\frc{\mathfrak{c}}
\newcommand\frM{\mathfrak{M}}
\newcommand\frZ{\mathfrak{Z}}
\newcommand\frz{\mathfrak{z}}
\newcommand\cA{\mathcal{A}}
\newcommand\cB{\mathcal{B}}
\newcommand\cF{\mathcal{F}}
\newcommand\cG{\mathcal{G}}
\newcommand\cL{\mathcal{L}}
\newcommand\cM{\mathcal{M}}
\newcommand\cO{\mathcal{O}}
\newcommand\cQ{\mathcal{Q}}
\newcommand\cR{\mathcal{R}}
\newcommand\cS{\mathcal{S}}
\newcommand\cZ{\mathcal{Z}}
\newcommand{\ep}{{(\varepsilon)}}
\newcommand{\ke}{k_{\varepsilon}}
\newcommand{\kea}{k_{\varepsilon,\alpha}}
\newcommand{\E}{\mathbb{E}}
\newcommand{\Nte}{\tilde{N}_{1}}
\newcommand{\vp}{\varphi}
\newcommand{\bRY}{{\mathbb{R}^{d'}}}
\newcommand{\bRYn}{{\mathbb{R}^{d'}\setminus\{0\}}}
 \newcommand{\sumstar}
 {\operatornamewithlimits{\sum@\kern-.2em\raise1ex\hbox{*}}}
\begin{document}

\author[F. Germ]{Fabian Germ}
\address{School of Mathematics,
University of Edinburgh,
King's  Buildings,
Edinburgh, EH9 3JZ, United Kingdom}
\email{fgerm@ed.ac.uk}

\author[I. Gy\"ongy]{Istv\'an Gy\"ongy}
\address{School of Mathematics and Maxwell Institute, 
University of Edinburgh, Scotland, United Kingdom.}
\email{i.gyongy@ed.ac.uk}

\keywords{Nonlinear filtering, random measures, L\'evy processes}

\subjclass[2020]{Primary  60G35, 60H15; Secondary 60G57, 60H20}

\begin{abstract} 
The filtering equations associated to a partially observed jump diffusion 
model $(Z_t)_{t\in [0,T]}=(X_t,Y_t)_{t\in [0,T]}$, driven by Wiener processes 
and Poisson martingale measures are considered. 
Building on results from two preceding articles on the filtering equations, 
the regularity of the conditional density of the signal $X_t$, 
given observations $(Y_s)_{s\in [0,t]}$, is investigated, when the conditional density of 
$X_0$ given $Y_0$ exists and belongs to a Sobolev space, and the coefficients 
satisfy appropriate smoothness and growth conditions.
\end{abstract}

\title[Regularity of the filtering density]{On partially observed jump diffusions III.
Regularity of the filtering density}

\maketitle
\tableofcontents
\mysection{Introduction}

Let $(\Omega,\F,\{\F_t\}_{t\geq 0},P)$ be a complete filtered probability space, 
carrying a $d_1+d'$-dimensional $\cF_t$-Wiener process $(W_t,V_t)_{t\geq0}$ and independent 
$\F_t$-Poisson martingale measures 
$\tilde N_i(d\frz,dt) = N_i(d\frz,dt)-\nu_i(d\frz)dt$ on $\bR_{+}\times \frZ_i$, for $i=0,1$,
with $\sigma$-finite characteristic measures $\nu_0$ and $\nu_1$ 
on a separable measurable space 
$(\frZ_0,\cZ_0)$ and on $(\frZ_1,\cZ_1)=(\bRYn, \cB(\bRYn)$, respectively, 
where $\cB(V)$ denotes the Borel $\sigma$-algebra on $V$ for topological spaces $V$.

We consider the signal and observation model
\begin{equation}                                                                          
\begin{split}
    dX_t    &= b(t,X_t,Y_t)\,dt + \sigma(t,X_t,Y_t)\,dW_t + \rho(t,X_t,Y_t)\,dV_t\\
            &+\int_{\frZ_0}\eta(t, X_{t-},Y_{t-},\frz)\,\tilde N_0(d\frz,dt) 
            + \int_{\frZ_1}\xi(t,X_{t-},Y_{t-},\frz)\,\tilde N_{1}(d\frz,dt)\\
    dY_t    &=B(t,X_t,Y_t)\,dt + dV_t + \int_{\frZ_1} \frz\,\tilde N_{1}(d\frz, dt),
    \end{split}                                                                                             \label{system_1}
\end{equation} 
where $b=(b^i)$, $B=(B^i)$, $\sigma=(\sigma^{ij})$ and $\rho=(\rho^{il})$ 
are Borel functions 
on $\bR_+\times\bR^{d+d'}$, with values in $\bR^d$, $\bR^{d'}$, $\bR^{d\times d_1}$ 
and $\bR^{d\times d'}$, respectively, $\eta=(\eta^i)$ and $\xi=(\xi^i)$ are 
$\bR^d$-valued $\cB(\bR_+\times\bR^{d+d'})\otimes\cZ_0$-measurable and 
$\bR^d$-valued $\cB(\bR_+\times\bR^{d+d'})\otimes\cZ_1$-measurable functions on 
$\bR_+\times\bR^{d+d'}\times \frZ_0$ and $\bR_+\times\bR^{d+d'}\times \frZ_1$, 
respectively.  

This paper is a continuation of \cite{GG1} and \cite{GG2}. In \cite{GG1} 
we derive the filtering equations, 
describing the time evolution of the conditional distribution 
$P_t(dx) = P(X_t\in dx| \cF^Y_t)$ and the unnormalised 
conditional distribution $\mu_t(dx) = P_t(dx)\lambda_t$  
of the unobserved component $X_t$ given $\cF^Y_t$, the $\sigma$-algebra 
generated by the observations $(Y_s)_{s\in [0,t]}$, where $(\lambda_t)_{t\in [0,T]}$ 
is a normalising positive process. The equation for $(\mu_t(dx))_{t\in [0,T]}$, 
referred to as Zakai equation, is given in Theorem \ref{theorem Z1} 
below and has the advantage of being linear in $\mu$, 
making it easier to analyse in some situations. 
For more details on the filtering equations for partially observed 
(jump) diffusions as well as for a historical account we refer to \cite{GG1} 
and the references therein. In \cite{GG2} it is shown that the conditional 
density $\pi_t=dP_t/dx$ exists for $t>0$ and belongs to $L_q$ for 
$q\in[1,p]$, if $\pi_0=dP_0/dx\in L_p$ for a $p\geq2$,  
the coefficients of \eqref{system_1} satisfy appropriate Lipschitz and growth conditions, 
and the derivatives of $\xi$ and $\eta$ in $x$ are equicontinuous in $x$, uniformly 
in their other variables. 
The aim of the present paper is to show that if in addition to the 
Lipschitz and growth conditions in \cite{GG1}, we assume that 
the coefficients have continuous and bounded derivatives in $x$ up to order $m+1$, 
for some integer $m\geq0$, and 
$\pi_0\in W^m_p$ for some $p\geq2$, then $(\pi_t)_{t\geq0}$ is a $W^m_p$-valued 
weakly cadlag process.

For partially observed diffusion processes, i.e., 
when $\xi=\eta=0$ and the observation process $Y$ does not have jumps, 
the existence and the regularity properties of the conditional density $\pi_t$ 
have been extensively studied in the literature. 
In \cite{K1978}, an early work on the regularity of the 
filtering density for continuous diffusions, it was shown 
that if the coefficients are bounded, $\sigma, \rho$ 
admit  bounded derivatives in $x\in\bR^d$ up to order $m+1$, 
$b,B$ admit bounded derivatives in $x$ 
up to order $m$, $\sigma\sigma^{\ast}$ is uniformly non-degenerate
and $\pi_0\in W^m_p\cap W^m_2$, then the 
filtering density $(\pi_t)_{t\in [0,T]}$ is weakly continuous 
as an $W^m_p$-valued process, where $p\geq2$ and $m\geq 0$. 
Later, under the uniform non-degeneracy condition 
on $\sigma\sigma^{\ast}$, stronger results are obtained 
in \cite{K1999} and \cite{K2011} by the help of the $L_p$-theory of SPDEs
developed there. Generalisations of the linear filtering theory 
are presented in \cite{K2010} by the help of the theory of SPDEs 
with VMO leading coefficients and growing lower order coefficients,  
see \cite{K2009}, \cite{K2010a}.

In \cite{R1980} it was proven that the nondegeneracy 
condition can be dropped if one imposes $m+2$ bounded 
derivatives on $\sigma,\rho$ in $x$, as well as 
$m+1$ derivatives on $b,B$ in $x$, to get that $\pi$ 
is a $W^m_p$-valued weakly continuous process 
if $\pi_0\in W^m_p\cap W^m_2$
for a $p\geq2$ and $m\geq1$. 
In \cite{KX1999} it was shown that the conditional density 
$dP_t/dx$ exists for any $t>0$ and it is in $L_2$ if $\pi_0=dP_0/dx$ exists,  
it belongs to $\in L_2$, and the coefficients are bounded and Lipschitz continuous. To achieve this, a nice calculation is presented in \cite{KX1999} 
to show that the $L_2(\Omega\times\bR^d,\bR)$-norm of $P_t^{(\varepsilon)}$, 
the conditional distribution $P_t$ mollified 
by Gaussian kernels, can be estimated 
by the $L_2(\Omega\times\bR^d,\bR)$-norm 
of $\pi_0$, independently of $\varepsilon>0$. 

More recently filtering densities associated 
to signal and observation models $(X_t,Y_t)_{t\geq0}$ 
with jumps have been investigated 
in a growing number of publications.
The above mentioned method from \cite{KX1999} 
was used in \cite{B2014}, \cite{CF2022},  \cite{MPX2019} 
and \cite{Q2021}, 
under various conditions on the filtering models,  
to prove that the conditional density 
$\pi_t=P(X_t\in dx|(Y_s)_{s\in[0,t]})/dx$ 
exists for $t>0$ and it is in $L_2$, when the initial density exists and belongs to 
$L_2$.
In \cite{B2014} only the observation process has jumps.  
In \cite{CF2022}  only the signal process has jumps due to an additive noise 
component which is a cadlag process of bounded variation, adapted to the 
observation process. In \cite{MPX2019} a fairly general jump diffusion model 
is considered, but, as in \cite{B2014} and \cite{CF2022},
the driving noises in the signal are independent of those in the observation. 
In \cite{Q2021} a general jump diffusion model with correlated signal and observation noises 
is considered, but as in the articles \cite{B2014}, \cite{CF2022} and \cite{MPX2019},
the coefficients in the signal do not depend on the observation process. In the above 
publications,  \cite{B2014}, \cite{CF2022}, \cite{MPX2019} 
and \cite{Q2021}, the coefficients of the SDE describing the signal and 
observation models are bounded and satisfy appropriate Lipschitz 
conditions. An approach from \cite{KO} is adapted to study the uniqueness 
of measure-valued solutions to the filtering equations for a  model with 
jumps in \cite{QD2015}, and  the existence of the filtering 
density in $L_2$ is obtained when the initial filtering density is in $L_2$ 
and the jump component in the signal is a symmetric $\alpha$-stable L\'evy process.  
In this publication the signal and observation noises 
are independent of each other, the drift and diffusion coefficients in the signal 
process depend only on $x$, the variable for the signal, they are bounded and 
their derivatives up to first order and and up second order, respectively, 
exist and are bounded functions.

As the present paper is a direct continuation of \cite{GG2}, 
it builds on the results of the latter. 
In \cite{GG2} the method from \cite{KX1999}, combined with methods 
from the theory of SPDEs, is applied to 
partially observed jump diffusions to show that the filtering density $\pi_t$ exists 
for $t>0$ and belongs to $L_q$ for a $p\in[1,p]$, provided $\pi_0\in L_p$  
for a $p\geq2$, the coefficients in the SDE 
satisfy appropriate Lipschitz conditions, the drift coefficient 
in the observation process is bounded, and the other coefficients 
satisfy a linear growth condition. In the present paper we investigate 
the regularity of the filtering density for the same filtering model as in \cite{GG2}.
In addition to these assumptions from \cite{GG2}, in the present paper 
we assume that for an integer $m\geq0$ the initial conditional density 
$\pi_0$ is in $W^m_p$ for some $p\geq2$ and the coefficients of the SDE admit 
bounded derivatives in $x$ up to order $m+1$.  Under these conditions 
we prove that $(\pi_t)_{t\geq0}$ is a $W^m_p$-valued weakly cadlag process.   
Moreover, we show that if the coefficients are also bounded, 
then $(\pi_t)_{t\geq0}$ is a $W^s_p$-valued strongly cadlag process for any $s<m$. \newline

This article is structured as follows. Section \ref{sec main results} 
contains the main results along with the required assumptions. 
In section \ref{sec preliminaries} we state some important results 
from \cite{GG1} and \cite{GG2} which we build on. 
Section \ref{sec estimates} contains Sobolev estimates 
necessary to obtain a priori estimates for the smoothed filtering measures. 
In section \ref{sec solvability} we investigate some solvability properties 
of the Zakai equation. Section \ref{sec proof} finally contains 
the proof of our main theorem, as well as some auxiliary results.  \newline

In conclusion we present important notions and notations used in this paper.
For an integer $n\geq0$ the notation $C^n_b(\bR^d)$ means 
the space of real-valued bounded continuous functions on $\bR^d$, 
which have bounded and continuous derivatives up to order $n$. 
(If $n=0$, then $C^0_b(\bR^d)=C_b(\bR^d)$ denotes the space of 
real-valued bounded continuous functions on $\bR^d$). 
We denote by  $\bM=\bM(\bR^d)$ the set of finite Borel measures 
on $\bR^d$ and by $\frM=\frM(\bR^d)$ the set of finite signed 
Borel measures on $\bR^d$. For $\mu\in\frM$ we use the notation 
$$
\mu(\varphi)=\int_{\bR^d}\varphi(x)\,\mu(dx) 
$$
for Borel functions $\varphi$ on $\bR^d$. 
We say that a function $\nu:\Omega\to\bM$ is $\cG$-measurable 
for a $\sigma$-algebra $\cG\subset\cF$, if $\nu(\varphi)$ is a  
$\cG$-measurable random variable for every bounded Borel function 
$\varphi$ on $\bR^d$. 
An $\bM$-valued stochastic process $\nu=(\nu_t)_{t\in[0,T]}$ 
is said to be weakly cadlag if almost surely 
$\nu_t(\varphi)$ is a cadlag function of $t$ for all $\varphi\in C_b(\bR^d)$. 
An $\frM$-valued process $(\nu_t)_{t\in [0,T]}$ is weakly cadlag, 
if it is the difference of two $\bM$-valued weakly cadlag processes.
For processes $U=(U_t)_{t\in [0,T]}$ we use the notation
$
\cF_t^{U}
$
for the $P$-completion of the $\sigma$-algebra generated by 
$\{U_s: s\leq t\}$. By an abuse of notation, we often write $\cF_t^U$ 
when referring to the filtration $(\cF^U_t)_{t\in [0,T]}$, 
whenever this is clear from the context.  For a  measure space 
$(\frZ,\cZ,\nu)$ and $p\geq1$ we use the notation 
$L_p(\frZ)$ for the $L_p$-space of $\bR^d$-valued 
$\cZ$-measurable processes defined on $\frZ$.  However, 
if not otherwise specified, the function spaces are considered to be over $\bR^d$.
We always use without mention the summation convention, 
by which repeated integer valued indices imply a summation. 
For a multi-index $\alpha=(\alpha_1,\dots,\alpha_d)$ 
of nonnegative integers $\alpha_i, i=1,\dots,d$, 
a function $\vp$ of $x=(x_1,\dots,x_d)\in\bR^d$ 
and a nonnegative  integer $k$ we use the notation
$$
D^\alpha\vp(x)=D_1^{\alpha_1}D_2^{\alpha_2}\dots D_d^{\alpha_d}\vp(x),
\quad\text{as well as}
\quad 
|D^k\vp|^2=\sum_{|\gamma|=k}|D^\gamma \vp|^2,
$$ 
where $D_i=\tfrac{\partial}{\partial {x^i}}$ and $|\cdot|$ denotes an appropriate norm. 
We also use the notation $D_{ij}=D_iD_j$.
If we want to stress that the derivative is taken in a variable $x$, 
we write $D^\alpha_x$. If the norm $|\cdot|$ is not clear from the context, 
we sometimes use appropriate subscripts, 
as in $|\vp|_{L_p}$ for the $L_p(\bR^d)$-norm of $\vp$. 
For $p\geq 1$ and integers $m\geq 0$ we use the notation $W^m_p$ 
for Borel functions $f=f(x)$ on $\bR^d$ such that
$$
|f|_{W^m_p}^p:=\sum_{k=0}^m \int_{\bR^d}|D^kf(x)|^p\,dx<\infty.
$$
Throughout the paper we work on the finite time interval $[0,T]$, 
where $T>0$ is fixed but arbitrary, as well as 
on a given complete probability space $(\Omega,\cF,P)$ equipped 
with a filtration $(\cF_t)_{t\geq0}$ such that $\cF_0$ 
contains all the $P$-null sets. 
For $p,q\geq 1$ and integers $m\geq 1$ 
we denote by $\bW^m_p= L_p((\Omega,\cF_0,P),W^m_p(\bR^d))$ and 
$\bW^m_{p,q}\subset  L_p(\Omega,L_q([0,T],W^m_p(\bR^d)))$ 
the set of $\cF_0\otimes  \cB(\bR^d)$-measurable 
real-valued functions   $f=f(\omega,x)$  
and $\cF_t$-optional $W^m_p$-valued processes  $g=g_t(\omega,x)$ such that
$$
|f|_{\bW^m_p}^p:=\E |f|_{W^m_p}^p<\infty
\quad\text{and}
\quad
|g|_{\bL_{p,q}}^p:=\E\Big(\int_0^T |g_t|_{W^m_p}^q dt\Big)^{p/q}<\infty
$$
respectively. If $m=0$ we set $\bL_p=\bW^0_p$ 
and $\bL_{p,q}=\bW^0_{p,q}$. In case a different 
$\sigma$-algebra $\cG$ than $\cF_0$ is considered above, 
we denote this explicitly by $\bL_p(\cG)$ and $\bW^m_p(\cG)$. 
If $m\geq 0$ is not an integer and $p> 1$ then $W^m_p$ 
denotes the space of real-valued generalised functions $h$ on $\bR^d$ 
such that 
$$
|h|_{W^m_p}:=| (1-\Delta)^{m/2} h|_{L_p}<\infty.
$$
Finally, for real-valued functions $f$ and $g$ on $\bR^d$, 
we often denote by $(f,g)$ the integral of $f\cdot g$ over $\bR^d$.

\mysection{Formulation of the main results}
\label{sec main results}

We fix nonnegative constants $K_0$, $K_1$, $L$, $K$ and functions 
$\bar\xi\in L_2(\frZ_1)=L_2(\frZ_1,\cZ_1,\nu_1)$, $\bar\eta\in 
L_2(\frZ_0)=L_2(\frZ_0,\cZ_0,\nu_0)$, used throughout 
the paper, and make the following assumptions.

\begin{assumption}                                                    \label{assumption SDE}
\begin{enumerate}
\item[(i)]For $z_j=(x_j,y_j)\in\mathbb{R}^{d+d'}$ ($j=1,2$), 
$t\geq0$ and $\frz_i\in\frZ_i$ ($i=0,1$) ,
$$
|b(t, z_1)-b(t,z_2)| + |B(t,z_1)-B(t,z_2)|
    +|\sigma(t,z_1)-\sigma(t, z_2)| 
    + |\rho(t,z_1)-\rho(t,z_2)|\leq L|z_1-z_2|,
$$
$$
|\eta(t,z_1,\frz_0)-\eta(t,z_2,\frz_0)|\leq \bar{\eta}(\frz_0)|z_1-z_2|,
$$
$$
|\xi(t,z_1,\frz_1)-\xi(t,z_2,\frz_1)|\leq \bar{\xi}(\frz_1)|z_1-z_2|.
$$
\item[(ii)] 
For all $z=(x,y)\in\mathbb{R}^{d+d'}$, $t\geq0$ 
and $\frz_i\in \frZ_i$ for $i=0,1$ we have 
$$
|b(t,z)|
 +|\sigma(t,z)|+ |\rho(t,z)|\leq K_0+K_1|z|,
 \quad |B(t,z)|\leq K,\quad 
 \int_{\frZ_1}|\frz|^2\,\nu_1(d\frz)\leq K_0^2,
$$
$$
|\eta(t,z,\frz_0)|\leq \bar{\eta}(\frz_0)(K_0+K_1|z|),
\quad   
|\xi(t,z,\frz_1)|
\leq \bar{\xi}(\frz_1)( K_0+K_1|z|).
$$
\item[(iii)] The initial condition $Z_0=(X_0,Y_0)$ is 
an $\cF_0$-measurable random variable 
with values in $\bR^{d+d'}$.
\end{enumerate}
\end{assumption}

\begin{assumption}                                                           \label{assumption p}
The functions $\bar\eta\in L_2(\frZ_0,\cZ_0,\nu_0)$ 
and $\bar\xi\in L_2(\frZ_1,\cZ_1,\nu_1)$ are 
are such that for constants $K_\eta$ and $K_\xi$ we have 
$\bar{\eta}(\frz_0)\leq K_\eta$ and $\bar{\xi}(\frz_1)\leq K_\xi$ 
for all $\frz_0\in\frZ_0$, $\frz_1\in\frZ_1$.
\end{assumption}

\begin{assumption}                                                                           \label{assumption nu}
For some $r>2$ let $\E|X_0|^r<\infty$ and the measure $\nu_1$ satisfy
$$
K_r:=\int_{\frZ_1} |\frz|^{r}\,\nu_1(d\frz)<\infty.
$$
\end{assumption}

By a well-known theorem of It\^o one knows that 
Assumption \ref{assumption SDE} ensures the existence and 
uniqueness of a solution $(X_t,Y_t)_{t\geq0}$ to  \eqref{system_1} 
for any given $\cF_0$-measurable initial 
value $Z_0=(X_0,Y_0)$, and for every $T>0$,
\begin{equation}                                                              \label{bound_Z}
\E\sup_{t\leq T}(|X_t|^q+|Y_t|^q)\leq N(1+\E|X_0|^q+\E|Y_0|^q)
\end{equation}
holds for $q=2$ with a constant $N$ depending only on 
$T$, $K_0$, $K_1$, $K_2$, $|\bar{\xi}|_{L_2}$, 
$|\bar{\eta}|_{L_2}$ and $d+d'$.
If in addition to Assumption \ref{assumption SDE} 
we assume Assumptions  \ref{assumption p} 
and \ref{assumption nu}, then it is known, 
see e.g. \cite{DKS}, that the moment estimate 
\eqref{bound_Z} holds with $q:=r$ 
for every $T>0$, 
where now the constant $N$ depends also 
on $r$, $K_r$ $K_{\xi}$ and $K_{\eta}$.

\begin{assumption}                                                                              \label{assumption estimates}

(i) For a constant $\lambda>0$ we have 
$$
\lambda|x-\bar x |\leq |x-\bar{x}+\theta(f_i(t,x,y,\frz_i) - f_i(t,\bar{x},y,\frz_i))|
$$
for all $\theta\in [0,1]$, $t\in [0,T]$, $y\in\bR^{d'}$, $x,\bar x\in\bR^d$, 
$\frz_i\in\frZ_i$, $i=0,$ and $f_0(t,x,y,\frz_0)=\eta(t,x,y,\frz_0)$, 
$f_1(t,x,y,\frz_1)=\xi(t,x,y,\frz_1)$.
\newline
(ii) For all $(t,y)\in\bR_+\times\bRY$ and all $x_1,x_2\in\bR^d$,
$$
|(\rho B)(t,x_1,y)-(\rho B)(t,x_2,y)|\leq L|x_1-x_2|.
$$
\newline
(iii) The functions $f_0(t,x,y,\frz):=\xi(t, x,y,\frz)$
and $f_1(t,x,y,\frz):= \eta(t,x,y,\frz)$
are continuously differentiable in $x\in\bR^d$ 
for each $(t,y,\frz)\in \bR_+\times\bRY\times \frZ_i$, for $i=0$ and $i=1$, 
respectively, such that 
$$
\lim_{\varepsilon\downarrow0}
\sup_{t\in[0,T]}\sup_{\frz\in\frZ_i}\sup_{|y|\leq R}
\sup_{|x|\leq R, |\bar x|\leq R, |x-x'|\leq\varepsilon}
|D_xf_i(t,x,y,\frz)-D_xf_i(t,\bar x,y,\frz)|=0 
$$
for every $R>0$. 
\end{assumption}

\begin{assumption}                                                                            \label{assumption SDE2}
Let $m\geq 0$ be an integer.
\begin{enumerate} 
\item[(i)]
The partial derivatives in $x\in\bR^d$ of the 
coefficients $b$, $B$, $\sigma$, $\rho$, $(\rho B)$, $\eta$ 
and $\xi$ up to order $m+1$ 
are functions such that
$$
\sum_{k=1}^{m+1}|D^{k}_x(b,B,\sigma,\rho,(\rho B))|\leq L
\quad\text{for all $t\in [0,T]$, $x\in\bR^d$, $y\in\bR^{d'}$}.
$$
\item[(ii)] Moreover,
$$
\sum_{k=1}^{m+1}|D^{k}_x\eta|\leq L\bar\eta, 
\quad 
\sum_{k=1}^{m+1}|D^{k}_x\xi|\leq L\bar\xi, 
$$
for all $t\in[0,T]$, $x\in\bR^d$, $y\in\bR^{d'}$ 
and $\frz_i\in \frZ_i, i=0,1$.
\end{enumerate}
\end{assumption}

\begin{remark}
\label{remark diffeom}
Note that Assumption \ref{assumption estimates}(i), together 
with Assumptions \ref{assumption p} and \ref{assumption SDE}(i), 
implies that for a constant $c = c(\lambda,K_\xi,K_\eta)$ 
we have for all $\theta\in[0,1]$, $y\in\bR^{d'}$, $t\in [0,T]$ and 
$\frz_i\in\frZ_i$, $i=0,1$, 
\begin{equation*}
c^{-1}|x-\bar{x}|\leq |x-\bar{x}+\theta(f_i(t,x,y,\frz_i)-f(t,\bar{x},y,\frz_i))|
\leq c|x-\bar{x}|
\quad
\text{for $x,\bar{x}\in\bR^d$,} 
\end{equation*}
with $f_0(t,x,y,\frz_0):=\eta(t,x,y,\frz_0)$ 
and $f_1(t,x,y,\frz_1):=\xi(t, x,y,\frz_1)$.
This, together with Assumption \ref{assumption estimates}(iii) 
in particular implies that for all $\theta\in[0,1]$, $y\in\bR^{d'}$, $t\in [0,T]$ and 
$\frz_i\in\frZ_i$, $i=0,1$ the mappings
$$
\tau^\eta(x) = x+\theta\eta(t,x,y,\frz_0)
\quad
\text{and}\quad\tau^\xi(x) = x+\theta\xi(t,x,y,\frz_1)
$$
are $C^1$-diffeomorphisms.
\end{remark}

Let $\cF_t^Y$ denote the completion of the $\sigma$-algebra generated by $(Y_s)_{s\leq t}$. 
\begin{theorem}
\label{theorem regularity}
Let Assumptions \ref{assumption SDE}, \ref{assumption p},  \ref{assumption estimates} 
and  \ref{assumption SDE2} hold. If $K_1\neq 0$ in Assumption \ref{assumption SDE}, 
then let additionally Assumption \ref{assumption nu} hold.
Assume the conditional density $\pi_0=P(X_0\in dx|\cF^Y_0)/dx$ exists almost surely
and for some $p\geq 2$ and integer $m\geq 0$ we have $\E|\pi_0|_{W_p^m}^p<\infty$. 
Then almost surely $P(X_t\in dx|\cF^Y_t)/dx$ exits and belongs 
to $W^m_p$ for every $t\in[0,T]$.\newline
Moreover, there is an $W^m_p$-valued weakly cadlag process $\pi=(\pi_t)_{t\in [0,T]}$ 
such that for each $t$ almost surely $\pi_t= P(X_t\in dx|\cF^Y_t)/dx$. If $K_1=0$ and $m\geq 1$, 
then $\pi$ is strongly cadlag as $W^s_p$-valued process for $s\in [0,m)$. 
\end{theorem}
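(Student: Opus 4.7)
The plan is to obtain uniform a priori $W^m_p$-estimates for a smooth approximation of the filtering density and then pass to the limit, exploiting the Zakai equation (Theorem \ref{theorem Z1}) as an SPDE. First I would mollify $\pi_0$ into a sequence $\pi_0^{(\varepsilon)}$ of smooth compactly supported functions converging to $\pi_0$ in $\bW^m_p$, and appropriately regularise the coefficients, so that by the existence results of \cite{GG2} the associated unnormalised filter $\mu_t^{(\varepsilon)}$ has a density $\pi_t^{(\varepsilon)}$ of the required smoothness for each $t\in[0,T]$. The goal of this reduction is the uniform bound
\[
\E\sup_{t\leq T}|\pi_t^{(\varepsilon)}|_{W^m_p}^p\leq N\bigl(1+\E|\pi_0|_{W^m_p}^p\bigr),
\]
with $N$ independent of $\varepsilon$.

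Next I would apply the spatial derivative $D^\alpha$, $|\alpha|\leq m$, to the strong form of the Zakai SPDE satisfied by $\pi_t^{(\varepsilon)}$, and use It\^o's formula for $|\cdot|_{L_p}^p$ in the jump-diffusion setting, as developed in \cite{GG2}. Under Assumption \ref{assumption SDE2}(i), the terms coming from the second-order operator built from $\sigma\sigma^\ast+\rho\rho^\ast$, together with the first-order observation-drift corrections involving $\rho B$ and $B$, can be absorbed after integration by parts thanks to the boundedness of their derivatives. The jump contributions from $\tilde N_0$ and $\tilde N_1$ are handled by representing the relevant increments as $(D^\alpha\pi)\circ\tau^{-1}$ with $\tau^\eta$ and $\tau^\xi$ from Remark \ref{remark diffeom}; a change of variables using the two-sided diffeomorphism bound, combined with Assumption \ref{assumption SDE2}(ii), reduces the compensated sums to commutator-type expressions in $D^\beta\pi$, $|\beta|\leq m$, thereby closing a Gr\"onwall argument.

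The main obstacle will be this jump-term analysis on the $L_p$-scale for $p>2$: the standard $L_2$ calculation of Krylov--Xu does not extend directly, and one has to control quantities of the form
\[
\int_{\bR^d}\bigl(|\varphi\circ\tau^{-1}-\varphi|^p-p|\varphi|^{p-2}\varphi\,(\varphi\circ\tau^{-1}-\varphi)\bigr)\,dx
\]
uniformly in the jump parameter via the bounds from Remark \ref{remark diffeom}, while simultaneously accommodating the cross terms produced by the correlation between the signal and the observation through $\tilde N_1$. Once the a priori estimate is secured, weak compactness in $\bW^m_p$ extracts a subsequential limit which, by the uniqueness results of \cite{GG1}, must coincide almost surely with $P(X_t\in dx|\cF^Y_t)/dx$, giving the pointwise-in-$t$ $W^m_p$-regularity. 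Weak cadlagness then follows because the Zakai equation provides a cadlag modification of $\mu_t(\varphi)$ (hence of $P_t(\varphi)$) for each $\varphi\in C_b$, and the uniform $W^m_p$-bound allows one to extend cadlag-in-duality from a countable dense family of test functions to all of $C_b$.

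Finally, in the case $K_1=0$ and $m\geq 1$ the coefficients are bounded, so the Zakai equation can be read as a genuine $W^{m-1}_p$-valued SPDE whose stochastic integrals, both Wiener and Poisson, are strongly cadlag in $W^{m-1}_p$. Combining strong cadlagness in $W^{m-1}_p$ with the uniform-in-$t$ bound in $W^m_p$ — using the continuous embedding $W^{m-1}_p\hookrightarrow W^s_p$ for $s\in[0,m-1]$ and the Sobolev interpolation inequality $|\cdot|_{W^s_p}\leq C|\cdot|_{W^m_p}^\theta|\cdot|_{W^{m-1}_p}^{1-\theta}$ for $s=\theta m+(1-\theta)(m-1)$, $\theta\in(0,1)$ — upgrades the regularity to strong cadlagness in $W^s_p$ for every $s\in[0,m)$.
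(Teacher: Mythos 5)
Your plan is essentially the paper's own route: regularise the initial density and the coefficients, obtain uniform a priori $W^m_p$ bounds by applying $D^\alpha$ and the $L_p$-It\^o formula developed in \cite{GG2}, treating the jump terms through the diffeomorphisms of Remark \ref{remark diffeom} and the convexity remainder, then pass to the limit by weak compactness, identify the limit with the filtering density, upgrade to weak cadlagness using the uniform bound and duality, and finally get strong cadlagness in $W^s_p$, $s\in[0,m)$, by interpolating against cadlagness at a lower regularity level. The differences are only in execution: the paper carries out the estimates on Gaussian-mollified densities (Kurtz--Xiong style symmetrisation, which is also what really controls the degenerate second-order term, rather than plain integration by parts), truncates the coefficients with a cutoff built to preserve the bi-Lipschitz property of $x\mapsto x+\theta\xi$, and proves strong cadlagness at the $L_p$ level by rewriting the equation in divergence form and invoking the It\^o formula of \cite{GW}, instead of reading the Zakai equation as a $W^{m-1}_p$-valued SPDE.
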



\mysection{Preliminaries}
\label{sec preliminaries}
Recall the notions and notations concerning measure-valued processes,  
given in the final part of the Introduction, and note that if $(\nu_t)_{t\geq0}$ 
is an $\bM$-valued weakly cadlag process then
there is a set 
$\Omega'\subset\Omega$ of full probability and there is uniquely defined 
(up to indistinguishability)   $\bM$-valued processes $(\nu_{t-})_{t\geq0}$ 
such that for every $\omega\in\Omega'$ 
$$
\nu_{t-}(\varphi)=\lim_{s\uparrow t}\nu_{s}(\varphi)
\quad\text{for all $\varphi\in C_b(\bR^d)$
and $t>0$,}
$$
and for each $\omega\in\Omega'$ we have $\nu_{t-}=\nu_t$, 
for all but at most countably many $t\in(0,\infty)$. 
The following result was proven in \cite{GG1}.
In order to formulate it, we define 
\begin{equation*}                                                                                   \label{gamma}
\gamma_t=\exp\left(-\int_0^tB(s,X_s,Y_s)\,dV_s-\tfrac{1}{2}
\int_0^t|B(s,X_s,Y_s)|^2\,ds\right), 
\quad t\in[0,T].
\end{equation*}
and note that since $B$ is bounded in magnitude, 
we have $\E\gamma_T=1$ and it is an $\cF_t$-martingale 
under $P$. Thus we can define the equivalent 
probability measure $Q$ by $Q:=\gamma_TP$.

\begin{theorem}                                                                                                         \label{theorem Z1}
Let Assumption \ref{assumption SDE} hold. If $K_1\neq 0$, 
then assume also $\E|X_0|^2<\infty$.
Then there exist measure-valued $\cF^Y_t$-adapted weakly cadlag processes 
$(P_t)_{t\in[0,T]}$ and $(\mu_t)_{t\in[0,T]}$ 
such that 
$$
P_t(\varphi)=\mu_t(\varphi)/\mu_t({\bf 1}),
\quad 
\text{for $\omega\in\Omega,\,\, t\in[0,T]$}, 
$$
$$
P_t(\varphi)=\E(\varphi(X_t)|\cF^Y_t),
\quad \mu_t(\varphi)=\E_{Q}(\gamma_t^{-1}\varphi(X_t)|\cF^Y_t) 
\quad
\text{(a.s.) for each $t\in[0,T]$}.
$$
\end{theorem}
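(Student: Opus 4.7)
The plan is to follow the classical Kallianpur--Striebel reference-probability approach. Since $B$ is bounded by Assumption \ref{assumption SDE}(ii), $(\gamma_t)_{t\in[0,T]}$ is a strictly positive $(\cF_t,P)$-martingale with $\E\gamma_T=1$, so $Q:=\gamma_TP$ is an equivalent probability measure on $\cF_T$ with $dP/dQ|_{\cF_t}=\gamma_t^{-1}$; in particular $(\gamma_t^{-1})_{t\in[0,T]}$ is a strictly positive $Q$-martingale (since $\E_Q[\gamma_t^{-1}\mathbf{1}_A]=P(A)=\E_Q[\gamma_s^{-1}\mathbf{1}_A]$ for $A\in\cF_s$, $s\leq t$). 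An application of the abstract Bayes formula then gives, for every bounded Borel $\varphi$ on $\bR^d$ and every fixed $t\in[0,T]$,
$$
\E(\varphi(X_t)\mid\cF^Y_t)=\frac{\E_Q(\gamma_t^{-1}\varphi(X_t)\mid\cF^Y_t)}{\E_Q(\gamma_t^{-1}\mid\cF^Y_t)}\quad\text{a.s.,}
$$
which motivates setting $\mu_t(\varphi):=\E_Q(\gamma_t^{-1}\varphi(X_t)\mid\cF^Y_t)$ and $P_t(\varphi):=\mu_t(\varphi)/\mu_t(\mathbf{1})$; the denominator is strictly positive a.s.\ because $\gamma_t^{-1}>0$.

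Next I would turn these fixed-$t$ conditional expectations into honest $\cF^Y_t$-measurable measure-valued random variables. Fixing a countable algebra $\cD\subset C_b(\bR^d)$ separating points and containing $\mathbf{1}$, I choose $\cF^Y_t$-measurable versions of $\mu_t(\varphi)$ for each $\varphi\in\cD$. Linearity, positivity, and the uniform bound $|\mu_t(\varphi)|\leq|\varphi|_\infty\mu_t(\mathbf{1})$ hold off a common null set; a Daniell--Stone / regular-conditional-probability argument (using that $\bR^d$ is Polish and $\E_Q\mu_t(\mathbf{1})=1$) extends the functional to a finite Borel measure $\mu_t(\omega,dx)$. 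The map $t\mapsto P_t:=\mu_t/\mu_t(\mathbf{1})$ is then a probability-measure-valued $\cF^Y_t$-adapted process, and the identifications $\mu_t(\varphi)=\E_Q(\gamma_t^{-1}\varphi(X_t)\mid\cF^Y_t)$ and $P_t(\varphi)=\E(\varphi(X_t)\mid\cF^Y_t)$, each holding almost surely for each fixed $t$, are built into the construction via Bayes' formula.

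The main obstacle, which I would treat last, is the weakly cadlag property: producing a single full-measure $\omega$-set on which $t\mapsto\mu_t(\varphi)$ is cadlag for every $\varphi\in C_b(\bR^d)$ simultaneously. My approach is to appeal to the optional projection theorem with respect to the right-continuous $P$-completion of $(\cF^Y_t)$. For each $\varphi\in\cD$, the process $Z^\varphi_t:=\gamma_t^{-1}\varphi(X_t)$ is cadlag (since $\gamma^{-1}$ is continuous and $X$ is cadlag) and satisfies the needed integrability on $[0,T]$ (being dominated by $|\varphi|_\infty$ times the $Q$-martingale $\gamma^{-1}$), so its $(\cF^Y_t)$-optional projection admits a cadlag modification agreeing with $\mu_\cdot(\varphi)$ at every fixed time. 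Intersecting the exceptional sets across the countable family $\cD$ yields a common full-measure set off which $t\mapsto\mu_t(\varphi_n)$ is cadlag for every $\varphi_n\in\cD$; the uniform bound $|\mu_t(\varphi)|\leq|\varphi|_\infty\mu_t(\mathbf{1})$ combined with a monotone-class argument then extends the cadlag property to all of $C_b(\bR^d)$. The technically delicate points are the right-continuity of $(\cF^Y_t)$ in the present jump-observation setting and the careful bookkeeping of exceptional sets across the measure-valued construction and the optional projection; once this is handled, the weakly cadlag property of $(P_t)$ follows from the quotient $P_t=\mu_t/\mu_t(\mathbf{1})$ and the strict positivity of the denominator.
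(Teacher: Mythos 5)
The identification part of your plan (reference measure $Q=\gamma_TP$, the abstract Bayes/Kallianpur--Striebel formula, and the construction of $\cF^Y_t$-measurable measure-valued versions from a countable separating family via regular conditional probabilities) is sound and is the same route as the paper's source \cite{GG1} for this theorem. The genuine gap is in the step you yourself call the main obstacle: you dispose of the weakly cadlag property by asserting that, because $Z^\varphi_t=\gamma_t^{-1}\varphi(X_t)$ is cadlag and of class (D) under $Q$, its $(\cF^Y_t)$-optional projection ``admits a cadlag modification''. There is no such general theorem to quote: the optional projection theorem gives existence and optionality, and cadlag versions are guaranteed for projections of \emph{martingales} (and, with some work, of continuous or monotone class-(D) processes), but an optional projection of a merely cadlag integrable process need not admit a cadlag modification, and the regularity cannot be obtained by the fixed-$t$ Hunt-lemma arguments, whose exceptional null sets depend on $t$ and on the approximating sequence. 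This is precisely why the filtering literature (including the derivation this paper leans on) obtains the cadlag property structurally: one restricts the countable test family to smooth functions (e.g. $C_0^\infty$ together with $\mathbf 1$), uses It\^o's formula to exhibit $\gamma_t^{-1}\varphi(X_t)$ as a $Q$-special semimartingale, and projects the martingale part (a cadlag $\cF^Y_t$-martingale) and the absolutely continuous finite-variation part (whose projection is an absolutely continuous process plus a cadlag martingale) separately -- in effect deriving the Zakai equation for $\mu_t(\varphi)$ -- before passing to the measure-valued statement. Your proposal skips exactly this content, so the central claim of the theorem (weak cadlagness of $\mu$ and $P$) is not actually proved.

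Two further points need repair even granting the regularity of $\mu_\cdot(\varphi)$ for a countable family. First, the optional projection machinery requires the usual conditions for $(\cF^Y_t)$; the paper's $\cF^Y_t$ is only the completion of $\sigma(Y_s,\,s\le t)$, and with jump observations its right-continuity is not free -- you flag this but do not resolve it (the standard fix is to work with $\cF^Y_{t+}$ and then check the fixed-time identities are unaffected). Second, to pass from the countable family to all of $C_b(\bR^d)$ and to divide by $\mu_t(\mathbf 1)$ you need (i) a $C_0$-density plus convergence-of-total-mass argument rather than a ``monotone class'' argument (cadlagness of $t\mapsto\mu_t(\varphi)$ is false for general bounded Borel $\varphi$, so monotone class reasoning is the wrong tool), and (ii) strict positivity of $\mu_t(\mathbf 1)$ and $\mu_{t-}(\mathbf 1)$ \emph{simultaneously for all} $t$, which follows from the nonnegative cadlag $Q$-martingale property of $\mu_\cdot(\mathbf 1)$ together with $\mu_T(\mathbf 1)>0$ a.s., not merely from $\gamma_t^{-1}>0$ at each fixed $t$.
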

We refer to $\mu_t$ (resp. $P_t$) as the unnormalised (resp. normalised) conditional distribution of $X_t$ given $\cF^Y_t$, $t\in [0,T]$.

We introduce 
the random differential operators 
\begin{equation}                                                                           \label{1.28.9.22}
\tilde \cL_t=a^{ij}_t(x)D_{ij}+b^i_t(x)D_i + \beta_t^k\cM_t^k, 
\quad 
\cM^k_t=\rho_t^{ik}(x)D_i+B^k_t(x), \quad k=1,2,...,d', 
\end{equation}
where $\beta_t:=B_t(X_t)$ and
$$
a^{ij}_t(x):=\tfrac{1}{2}\sum_{k=1}^{d_1}(\sigma^{ik}_t\sigma^{jk}_t)(x)
+\tfrac{1}{2}\sum_{l=1}^{d'}(\rho^{il}\rho_t^{jl})(x), 
\quad\sigma_t^{ik}(x):=\sigma^{ik}(t,x,Y_t),\quad
\rho_t^{il}(x):=\rho^{il}(t,x,Y_t), 
$$
$$
b^i_t(x):=b^i(t,x,Y_t),
\quad 
B^k_t(x):=
B^k(t,x,Y_t)
$$
for $\omega\in\Omega$, $t\geq0$, $x=(x^1,...,x^d)\in\bR^d$, 
and $D_i=\partial/\partial x^i$, 
$D_{ij}=\partial^2/(\partial x^i\partial x^j)$ for $i,j=1,2...,d$. 
Moreover for every $t\geq0$ and $\frz \in \frZ_1$ 
we introduce the random operators $I_t^{\xi}$ and $J_t^{\xi}$ defined by 
\begin{equation*}
T_t^{\xi}\varphi(x,\frz) = \varphi(x+\xi_t(x,\frz), \frz)
\end{equation*}
$$
I_t^{\xi}\varphi(x,\frz)=T_t^{\xi}\varphi(x,\frz)-\varphi(x,\frz), 
\quad
J_t^{\xi}\phi(x, \frz)=I_t^{\xi}\phi(x, \frz)-\sum_{i=1}^d\xi_t^i(x,\frz)D_i\phi(x,\frz)
$$
for functions $\varphi=\varphi(x,\frz)$ and $\phi=\phi(x,\frz)$ of 
$x\in\bR^d$ and $\frz\in\frZ_1$, 
and furthermore the random operators 
$I_t^{\eta}$ and $J_t^{\eta}$, defined as $I_t^{\xi}$ and $J_t^{\xi}$, respectively, with 
$\eta_t(x,\frz)$ in place of $\xi_t(x,\frz)$, where 
$$
\xi_t(x,\frz_{1}):=\xi(t,x,Y_{t-},\frz_{1}),
\quad
\eta_t(x,\frz_{0}):=\eta(t,x,Y_{t-},\frz_{0})
$$
for $\omega\in\Omega$, $t\geq0$, $x\in\bR^d$ and $\frz_i\in\frZ_i$ for $i=0,1$. 

From \cite{GG2} we know that if the unnormalised conditional distribution 
$\mu_t$ has a density such that $u_t=d\mu_t/dx$ (a.s.) for each $t\in[0,T]$ 
for an $L_p$-valued  weakly cadlag process $(u_t)_{t\in[0,T]}$ for some $p\geq 2$, 
then it satisfies for each $\vp\in C^\infty_0$ almost surely 
\begin{equation}
\begin{split}
(u_t,\vp)=&(\psi,\vp) +  \int_0^t(u_{s},\tilde\cL_s\varphi)\,ds
+ \int_0^t(u_{s},\cM_s^k\varphi)\,dV^k_s
+ \int_0^t\int_{\frZ_0}(u_{s},J_s^{\eta}\varphi)\,\nu_0(d\frz)ds\\ 
&+ \int_0^t\int_{\frZ_1}(u_{s},J_s^{\xi}\varphi)\,\nu_1(d\frz)ds
+\int_0^t\int_{\frZ_1}(u_{s-},I_s^{\xi}\varphi)\,\tilde N_1(d\frz,ds),\quad t\in [0,T].
\end{split}
\label{equZ}
\end{equation}
for all $t\in[0,T]$. 
Formally we may write \eqref{equZ} as the Cauchy problem
\begin{align}
\nonumber
du_t =& \tilde{\cL}_t^\ast u_t\,dt + \cM^{\ast k}_tu_t\,dV_t^k 
+ \int_{\frZ_0}J_t^{\eta\ast}u_t\,\nu_0(d\frz)dt
\\
\label{equdZ}
 &+\int_{\frZ_1}J_t^{\xi\ast}u_t\,\nu_1(d\frz)dt 
 + \int_{\frZ_1}I_t^{\xi\ast}u_{t-}\,\Nte(d\frz,dt),\\
 \nonumber
u_0 =& \psi.
\end{align}
for a given $\psi$.

\begin{definition}                                                                                                 \label{def Lp solution}
Let integers $m\geq 0$ and $p\geq 2$.  
Let $\psi$ be an $W^m_p$-valued $\cF_0$-measurable random variable. 
Then we say that a $W^m_p$-valued $\cF_t$-adapted 
weakly cadlag process $(u_t)_{t\in[0,T]}$ 
is a $W^m_p$-solution of \eqref{equdZ} with initial condition $\psi$, 
if for each $\varphi\in C_0^{\infty}$ almost surely \eqref{equZ} holds 
for every $t\in[0,T]$.\newline
If $m = 0$, then we call $u$ an $L_p$-solution instead of a $W^0_p$-solution.
\end{definition}

As in \cite{GG2} we are interested in solutions that satisfy
\begin{equation}
\label{condition u}
\esssup_{t\in [0,T]}|u_t|_{L_1}<\infty\quad\text{and}
\quad
\sup_{t\in [0,T]}\int_{\bR^d}|y|^2|u_t(y)|\,dy<\infty\quad\text{(a.s.).}
\end{equation}

To formulate the following results from \cite{GG2}, Lemma 5.7 and Theorem 2.1 therein, 
we recall that there exists a cadlag $\cF^Y_t$-adapted process 
$( ^o\!\gamma_t)_{t\in [0,T]}$, called the optional projection 
of $(\gamma_t)_{t\in [0,T]}$ under $P$ with respect to $(\cF^Y_t)_{t\in [0,T]}$, 
such that for every $\F^Y_t$-stopping time $\tau\leq T$ we have 
\begin{equation}
\label{o gamma}
\E(\gamma_\tau|\cF^Y_\tau) = {^o\!\gamma}_\tau\quad\text{almost surely.}
\end{equation}
Since for each $t$, by known properties of conditional expectations, almost surely
$$
\mu_t({\bf1}) = \E_Q(\gamma_t^{-1}|\cF^Y_t) = 1/\E(\gamma_t|\cF^Y_t) = 1/{^o\!\gamma}_t
$$
and $P,\mu$ are weakly cadlag in the sense described above, 
we also have that 
almost surely $P_t(\vp) = \mu_t(\vp){^o\!\gamma}_t$ 
for each $t\in [0,T]$ and $\vp\in C_b$.

\begin{theorem}
\label{theorem Lp}
Let Assumptions \ref{assumption SDE}, \ref{assumption p} and  \ref{assumption estimates} hold. 
If $K_1\neq 0$, then let additionally Assumption \ref{assumption nu} hold
for some $r>2$. Assume the conditional density $\pi_0=P(X_0\in dx|Y_0)/dx$ 
exists almost surely and $\E|\pi_0|_{L_p}^p<\infty$ for some $p\geq2$. \newline
(i) The unnormalized conditional density $(u_t)_{t\in [0,T]}$ exists almost surely 
and is an $L_p$-valued weakly cadlag process such that for each $t\in [0,T]$ 
almost surely $u_t = d\mu_t/dx$ and
$$
\E\sup_{t\in [0,T]}|u_t|_{L_p}^p\leq N\E|\pi_0|_{L_p}^p.
$$
for a constant $N=N(d,d',p,K,K_\xi,K_\eta, L,T,\lambda,|\bar{\xi}|_{L_2},|\bar{\eta}|_{L_2})$.
Moreover, $u$ is the unique $L_2$-solution to \eqref{equdZ} satisfying the conditions in \eqref{condition u}.\newline
(ii)
Almost surely the conditional density 
$P(X_t\in dx|\cF^Y_t)/dx$ 
exists and belongs to $L_p$ for all $t\in[0,T]$. Moreover, there is an $L_p$-valued  weakly cadlag process  $(\pi_t)_{t\in[0,T]}$, such that for each $t\in [0,T]$ almost surely $\pi_t=P(X_t\in dx|\cF^Y_t)/dx$, as well as almost surely $\pi_t = u_t {^o\!\gamma_t}$ for all $t\in [0,T]$.
\end{theorem}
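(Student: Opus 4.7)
The plan is to upgrade the $L_p$-conclusions of Theorem \ref{theorem Lp} to $W^m_p$-regularity by establishing an a priori estimate for higher derivatives of the unnormalised density $u_t=d\mu_t/dx$ and then transferring the conclusion to $\pi_t$ via the identity $\pi_t=u_t\,{}^o\!\gamma_t$. Since $\pi_0\in W^m_p\subset L_p$, the hypotheses of Theorem \ref{theorem Lp} are satisfied, so we already have an $L_p$-valued weakly cadlag process $(u_t)_{t\in[0,T]}$ solving the Zakai equation \eqref{equdZ} with initial datum $\pi_0$, and a weakly cadlag $\pi$ with $\pi_t=u_t\,{}^o\!\gamma_t$ for each $t$. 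The task reduces to proving
\begin{equation*}
\E\sup_{t\in[0,T]}|u_t|_{W^m_p}^p\leq N\,\E|\pi_0|_{W^m_p}^p,
\end{equation*}
since this forces $u_t\in W^m_p$ and hence $\pi_t=u_t\,{}^o\!\gamma_t\in W^m_p$ almost surely for each $t\in[0,T]$.

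I would obtain this estimate by induction on $m$, with the base case furnished by Theorem \ref{theorem Lp}. For the inductive step, mollify the solution by $u_t^\ep=u_t*k_\varepsilon$ for a standard nonnegative mollifier $k_\varepsilon$; then $u^\ep$ satisfies the SPDE obtained from \eqref{equdZ} by convolving with $k_\varepsilon$, with commutator remainders $[k_\varepsilon*,\tilde\cL_t^*]$, $[k_\varepsilon*,\cM_t^{*k}]$ and analogous expressions for the jump operators, which by standard Friedrichs-type lemmas vanish in $W^{m-1}_p$ as $\varepsilon\downarrow0$. Applying $D^\alpha$ with $|\alpha|\leq m$ to this smoothed equation produces an SPDE for $D^\alpha u^\ep$ of the same structural form, with additional lower-order inhomogeneities arising from the commutators $[D^\alpha,\tilde\cL_t^*]$, $[D^\alpha,\cM_t^{*k}]$, $[D^\alpha,J_t^{\eta*}]$, $[D^\alpha,J_t^{\xi*}]$ and $[D^\alpha,I_t^{\xi*}]$. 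Under Assumption \ref{assumption SDE2}, the coefficients $b,B,\sigma,\rho,\rho B$ admit bounded $x$-derivatives up to order $m+1$ while those of $\eta,\xi$ are dominated by $L\bar\eta,L\bar\xi$; hence all these commutators yield contributions whose $L_p$-norms are controlled by $N\sum_{|\beta|\leq|\alpha|}|D^\beta u^\ep|_{L_p}$, modulated by $\bar\eta,\bar\xi$ in the jump integrals.

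The core analytical step is then to apply the It\^o-type formula for $|u_t^\ep|_{W^m_p}^p$ already used in the proof of Theorem \ref{theorem Lp}, handling each arising term. The Sobolev estimates collected in Section \ref{sec estimates} are designed precisely for this purpose: they deliver bounded action of $I_t^{\xi*}, J_t^{\xi*}$ and $J_t^{\eta*}$ on $W^m_p$, compatible with the change of variables along the $C^1$-diffeomorphisms $\tau^\xi,\tau^\eta$ from Remark \ref{remark diffeom}, and allow one to control the martingale part in $\tilde N_1$ via the Burkholder-Davis-Gundy inequality. A Gronwall argument then yields $\E\sup_{t}|u_t^\ep|_{W^m_p}^p\leq N\,\E|\pi_0*k_\varepsilon|_{W^m_p}^p+o(1)$ as $\varepsilon\downarrow0$, whence the desired bound after taking $\varepsilon\to0$. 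This is where the main obstacle lies: correctly handling the $\tilde N_1$-martingale term in $W^m_p$, together with the intricate commutator algebra involving $I_t^{\xi*}$, is the main work that Section \ref{sec estimates} is set up to perform.

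With the a priori bound in hand, the weak cadlag property of $\pi$ as a $W^m_p$-valued process follows from that of $u$ in $L_p$: for every $\varphi\in C_0^\infty$, $(D^\alpha u_t,\varphi)=(-1)^{|\alpha|}(u_t,D^\alpha\varphi)$ is cadlag, and $u_t$ lies almost surely in a bounded subset of $W^m_p$; since $C_0^\infty$ is dense in $W^{-m}_{p'}$, a density and uniform-boundedness argument upgrades the cadlag property to every continuous linear functional on $W^m_p$, and multiplying by the scalar cadlag process ${}^o\!\gamma$ transfers this to $\pi$. Finally, in the case $K_1=0$ all coefficients are bounded, and a standard application of uniform convexity in $L_p$ (weak cadlagness plus cadlagness of $|u_t|_{L_p}$, which the $L_p$-It\^o formula provides) gives strong cadlagness of $u$ in $L_p$; combined with the uniform $W^m_p$ bound through the interpolation inequality
\begin{equation*}
|u_t-u_{t'}|_{W^s_p}\leq C\,|u_t-u_{t'}|_{L_p}^{1-s/m}\bigl(|u_t|_{W^m_p}+|u_{t'}|_{W^m_p}\bigr)^{s/m},\qquad s\in[0,m),
\end{equation*}
this upgrades $u$ to a strongly cadlag $W^s_p$-valued process for every $s<m$, and multiplication by the scalar cadlag ${}^o\!\gamma$ transfers the same property to $\pi$.
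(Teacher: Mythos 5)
There is a genuine gap: your proposal does not prove the statement at hand, it assumes it. The statement to be established is Theorem \ref{theorem Lp} itself — the $L_p$-level result asserting that the unnormalised conditional density $u_t=d\mu_t/dx$ exists, is an $L_p$-valued weakly cadlag process with $\E\sup_{t\in[0,T]}|u_t|_{L_p}^p\leq N\E|\pi_0|_{L_p}^p$, is the unique $L_2$-solution of \eqref{equdZ} under \eqref{condition u}, and satisfies $\pi_t=u_t\,{}^o\!\gamma_t$. Your very first step ("Since $\pi_0\in W^m_p\subset L_p$, the hypotheses of Theorem \ref{theorem Lp} are satisfied, so we already have an $L_p$-valued weakly cadlag process $(u_t)$ solving \eqref{equdZ} ... and $\pi_t=u_t\,{}^o\!\gamma_t$") invokes exactly this theorem, and the base case of your induction is again "furnished by Theorem \ref{theorem Lp}". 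What you then sketch — commutator estimates under Assumption \ref{assumption SDE2}, the It\^o formula for $|u^{(\varepsilon)}_t|_{W^m_p}^p$, Gronwall, Lemma-\ref{lemma weakly cadlag}-type upgrading of weak cadlagness, and the interpolation argument for strong cadlagness when $K_1=0$ — is in substance the strategy for the paper's main Theorem \ref{theorem regularity}, not a proof of Theorem \ref{theorem Lp}. The circularity means none of the actual content of the statement is established: in particular, the existence of a density for $\mu_t$ when only $\E|\pi_0|_{L_p}^p<\infty$ is assumed (no Sobolev regularity of $\pi_0$), the uniqueness of the $L_2$-solution under the integrability conditions \eqref{condition u}, and the identification $\pi_t=u_t\,{}^o\!\gamma_t$ via the optional projection \eqref{o gamma} are nowhere argued.

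For comparison, the paper does not reprove this statement either; its proof is a citation to the companion article \cite{GG2} (Lemma 5.5/5.7 and Theorem 2.1 there). A self-contained blind proof would have to reconstruct that argument: derive the measure-valued Zakai equation for $\mu_t$ (Theorem \ref{theorem Z1}), mollify with the Gaussian kernel $k_{\varepsilon}$ and apply the It\^o formula to $|\mu_t^{(\varepsilon)}|_{L_p}^p$, use the $\alpha=0$ versions of the Sobolev estimates (the analogues of Lemmas \ref{lemma dpe1}--\ref{lemma dpe4}) to obtain a bound on $\E\sup_{t\le T}|\mu_t^{(\varepsilon)}|_{L_p}^p$ uniform in $\varepsilon$ — this is the Kurtz--Xiong device adapted to the jump setting, relying on Assumption \ref{assumption estimates}(i) to control the change of variables along $x\mapsto x+\theta\xi$, $x\mapsto x+\theta\eta$ — then let $\varepsilon\downarrow 0$ to extract the density $u_t$ and its weak cadlagness, prove uniqueness of the $L_2$-solution satisfying \eqref{condition u}, and finally identify $\pi_t=u_t\,{}^o\!\gamma_t$ using $\mu_t(\mathbf 1)=1/{}^o\!\gamma_t$. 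None of these steps appears in your proposal, so as a proof of the quoted statement it is not salvageable without essentially starting over; as a sketch of Theorem \ref{theorem regularity} it is broadly aligned with the paper's Sections \ref{sec estimates}--\ref{sec proof}, but that is a different statement.
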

\begin{proof}
See Lemma 5.5 and Theorem 2.1 in \cite{GG2}. 
\end{proof}

\begin{lemma}
\label{lemma weakly cadlag}
Let $1<p<\infty$ and let $(v_t)_{t\in [0,T]}$ be a weakly cadlag 
$L_p$-valued process. Assume moreover that for 
an $m\geq 0$ almost surely $\esssup_{t\in [0,T]}|v_t|_{W^m_p}<\infty$ 
and $v_T\in W^m_p$. 
 Then $v$ is weakly cadlag as a $W^m_p$-valued process.
\end{lemma}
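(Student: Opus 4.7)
The plan is to leverage the uniform $W^m_p$-bound to bootstrap the $L_p$-weak cadlag property to a $W^m_p$-weak cadlag property via a density argument. For $p>1$ the space $W^m_p$ is reflexive with dual $W^{-m}_{p'}$ (where $1/p+1/p'=1$), and $C_0^\infty$ is norm-dense in the separable space $W^{-m}_{p'}$; this density is what lets me transfer cadlag-ness from the class of test functions already controlled by the $L_p$-weak cadlag assumption to all of $W^{-m}_{p'}$.

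First I would work on a full-probability event on which $v$ is $L_p$-weakly cadlag, $C := \esssup_{t \in [0,T]}|v_t|_{W^m_p} < \infty$, and $v_T \in W^m_p$. A preliminary step promotes the essential supremum to a pointwise bound. For any $t \in [0,T)$, pick $s_n \downarrow t$ outside the Lebesgue-null set of bad times; the bounded sequence $(v_{s_n})$ in the reflexive space $W^m_p$ has a weak subsequential $W^m_p$-limit $\tilde v$, which pairs against any $\varphi \in C_0^\infty$ in the same way as $v_t$ does by the $L_p$-weak cadlag property. Hence $\tilde v = v_t \in W^m_p$ with $|v_t|_{W^m_p} \leq C$ for every $t \in [0,T]$; the hypothesis $v_T \in W^m_p$ is needed here to cover the right endpoint that the essential supremum may miss. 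This also ensures that $(v_t,\varphi)$ is well defined via the duality pairing for every $\varphi \in W^{-m}_{p'}$.

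Finally, I fix a countable dense subset $\{\varphi_j\} \subset C_0^\infty$ of $W^{-m}_{p'}$ and intersect with the countably many null sets on which the individual processes $t \mapsto (v_t,\varphi_j)$ fail to be cadlag, so that all of them are simultaneously cadlag almost surely on a single event of full probability. For arbitrary $\varphi \in W^{-m}_{p'}$ I select approximants $\varphi_j \to \varphi$ in $W^{-m}_{p'}$, and apply
$$
|(v_t - v_s, \varphi)| \leq |(v_t - v_s, \varphi_j)| + 2C|\varphi - \varphi_j|_{W^{-m}_{p'}}
$$
to deduce right-continuity of $(v_t, \varphi)$ at every $t$ (send $s \downarrow t$ first, then $j \to \infty$), and to show that $((v_{s_n},\varphi))_n$ is Cauchy along any $s_n \uparrow t$, giving existence of a left limit independent of the chosen sequence. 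The only real obstacle is bookkeeping of null sets uniformly in $\varphi$, handled here by the separability of $W^{-m}_{p'}$ rather than by any hard analysis.
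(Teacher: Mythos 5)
Your proposal is correct and follows essentially the same route as the paper: first upgrade the essential supremum to a pointwise bound $\sup_{t\in[0,T]}|v_t|_{W^m_p}<\infty$ by taking times $s_n\downarrow t$ with $|v_{s_n}|_{W^m_p}\le C$, extracting a weak $W^m_p$-subsequential limit and identifying it with $v_t$ through the $L_p$-weak cadlag property, and then transfer cadlag-ness to all of $(W^m_p)^*$ by approximating a general functional by elements of a dense subspace already controlled by the $L_p$-weak cadlag assumption, using the uniform bound in the triangle inequality. The only (cosmetic) difference is that you approximate in $W^{-m}_{p'}$ by $C_0^\infty$ and invoke separability for the null-set bookkeeping, whereas the paper approximates by elements of $L_q$, $q=p/(p-1)$, which is dense in $(W^m_p)^*$ by reflexivity.
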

\begin{proof}
Let $\Omega'$ be the set of those $\omega\in\Omega$ 
such that $(v_t(\omega))_{t\in [0,T]}$ is weakly cadlag 
as an $L_p$-valued function, $v_T(\omega)\in W^m_p$ 
and $\esssup_{t\in [0,T]}|v_t(\omega)|_{W^m_p}<\infty$.  
Then $P(\Omega')=1$, and for each $\omega\in\Omega'$ 
there exists a dense subset $\bT_\omega$ in $[0,T]$ 
such that $\sup_{ t\in \bT_\omega}|v_t(\omega)|_{W^m_p}<\infty$. 
If $\omega\in\Omega'$ and $t\notin\bT_\omega$, $t\neq T$, 
then there exists a sequence $(t_n)_{n=1}^\infty\subset\bT_\omega$ 
such that $t_n\downarrow t$. Since 
$\sup_{ t\in \bT_\omega}|v_t(\omega)|_{W^m_p}<\infty$ 
there exists a subsequence, also denoted by $(t_n)_{n=1}^\infty$, 
such that $v_{t_n}(\omega)$ converges weakly in $W^m_p$
to some element $\tilde{v}\in W^m_p$. 
However, as $v$ is weakly cadlag as an $L_p$-valued process, 
we know that $v_{t_n}\to v_t$ weakly in $L_p$ as $n\to\infty$ 
and hence $\tilde{v} = v_t\in W^m_p$. 
Thus clearly also 
$\sup_{t\in [0,T]}|v_t(\omega)|_{W^m_p}<\infty$ if $\omega\in\Omega'$. 
To see that $v$ is weakly cadlag as a $W^m_p$-valued process, 
note first that since $W^m_p$ is a reflexive space, which is embedded 
continuously and densely into $L_p$, we have that 
the dual $(L_p)^*=L_q$, $q=p/(p-1)$, is embedded continuously and densely 
into $(W^m_p)^*$. Therefore, 
for each $\varepsilon>0$ and $\phi\in (W^m_p)^*$ 
there is an $\phi_\varepsilon\in L_q$ 
such that $|\phi-\phi_{\varepsilon}|_{(W^m_p)^*}<\varepsilon$. 
Fix a $t\in [0,T)$ and a sequence $t_n\downarrow t$. Then
$$
|(v_{t_n},\phi)-(v_t,\phi)|\leq |(v_{t_n},\phi-\phi_\varepsilon)| 
+ |(v_{t_n},\phi_\varepsilon)-(v_t,\phi_\varepsilon)| 
+ |(v_t,\phi_{\varepsilon}-\phi)| 
$$
$$
\leq   2\varepsilon\sup_{t\in [0,T]}|v_t|_{W^m_p} 
+ |(v_{t_n},\phi_\varepsilon)-(v_t,\phi_\varepsilon)|.
$$
Recalling that $v$ is weakly cadlag as an $L_p$-valued process finishes the proof. 
\end{proof}

The corollary of the following lemma  
will play an essential role in the proof of the statement 
on the strong cadlagness 
of $L_p$-solutions to the filtering equations, 
see Proposition \ref{proposition strongly cadlag}.
\begin{lemma}                                                                     \label{lemma 1.14.3.22}
Let $\zeta$ be an $\bR^d$-valued function on $\bR^d$ such that 
for an integer $m\geq1$ it is continuously differentiable up to order $m$, 
and 
\begin{equation}                                                              \label{diffeomorphism}
\inf_{\theta\in[0,1]}\inf_{x\in\bR^d}|\det(\bI+
\theta D\zeta(x))|=:\lambda>0, 
\quad 
\max_{0\leq k\leq m}\sup_{x\in\bR^d}|D^k\zeta(x)|=:M_m<\infty. 
\end{equation}
Then the following statements hold.
\begin{enumerate}
\item[(i)] The function $\tau=x+\theta\zeta(x)$, $x\in\bR^d$, 
is a $C^m$-diffeomorphism for each $\theta$, 
such that for all $x\in\bR^d$, $\theta\in[0,1]$,
\begin{equation}                                                                  \label{Dinverse}
\lambda'\leq |\det D\tau^{-1}(x)|\leq\lambda'',
\quad\text{and}\quad 
\max_{1\leq k\leq m}\sup_{x\in\bR^d}|D^k\tau^{-1}|\leq M'_{m}<\infty, 
\end{equation}
with constants $\lambda'=\lambda'(d, M_1)>0$, 
$\lambda''=\lambda''(d,M_1)$  
and $M'_m=M'_m(d,\lambda,M_m)$. 
\item[(ii)] The function $\zeta^{\ast}(x)=-x+\tau^{-1}(x)$, $x\in\bR^d$,  
is continuously differentiable up to order $m$, such that 
\begin{align}
\sup_{\bR^d}|\zeta^{\ast}|&=\sup_{\bR^d}|\zeta|,                                   \label{1.12.3.22} \\
\sup_{\bR^d}|D^k\zeta^{\ast}|
&\leq M^{\ast}_m\max_{1\leq j\leq k}\sup_{\bR^d}|D^j\zeta|, 
\quad\text{for $k=1,2,...,m$},                                                                  \label{2.12.3.22}\\                                                               
\inf_{\theta\in[0,1]}\inf_{\bR^d}|\det(\bI+\theta D\zeta^{\ast})|
&\geq \lambda'\inf_{\theta\in[0,1]}\inf_{\bR^d}|\det(\bI+\theta D\zeta)|,              \label{3.12.3.22}                                         
\end{align}
with a constant $M_m^{\ast}=M_m^{\ast}(d,\lambda,M_m)$ and with 
$\lambda'$ from  \eqref{Dinverse}.
\item[(iii)] For the function 
$\frc=\det(\bI+D\zeta^{\ast})-1$
we have 
\begin{equation}                                                                                        \label{4.12.3.22}
\sup_{x\in\bR^d}|D^k\frc(x)|
\leq N\max_{1\leq j\leq k+1}\sup_{\bR^d}|D^j\zeta|,        
\end{equation}                                                                                            
for $0\leq k\leq m-1$ with a constant $N=N(d,\lambda, m, M_m)$.
\end{enumerate}
\end{lemma}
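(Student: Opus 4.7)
The plan is to establish (i) via the inverse function theorem combined with Hadamard's global inversion argument, deduce (ii) from the explicit identity $\zeta^{\ast}(x) = -\zeta(\tau^{-1}(x))$, and obtain (iii) by applying Jacobi's differentiation formula for the determinant.

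For (i), the matrix $D\tau(x) = \bI + \theta D\zeta(x)$ satisfies $|\det D\tau(x)| \geq \lambda > 0$ uniformly in $x \in \bR^d$ and $\theta \in [0,1]$, so by the inverse function theorem $\tau$ is a local $C^m$-diffeomorphism. Since $|\tau(x)-x| \leq M_0$, the map $\tau$ is proper, and Hadamard's global inversion theorem (using simple connectedness of $\bR^d$) then gives that $\tau$ is a global $C^m$-diffeomorphism. The two-sided bound on $|\det D\tau^{-1}(x)| = 1/|\det D\tau(\tau^{-1}(x))|$ follows by combining the lower bound $|\det D\tau|\geq\lambda$ with an upper bound $|\det D\tau|\leq N(d,M_1)$ coming from $|D\tau|\leq 1+M_1$. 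For the higher-derivative bounds I would induct on $k$: differentiating $\tau\circ\tau^{-1} = \mathrm{id}$ yields $D\tau^{-1} = (D\tau)^{-1}\circ\tau^{-1}$, and by Cramer's rule $D\tau^{-1}$ is a rational function of the entries of $D\tau\circ\tau^{-1}$; iterating this together with Fa\`a di Bruno's formula produces a universal polynomial expression for $D^k\tau^{-1}$ in $(D\tau)^{-1}\circ\tau^{-1}$, $\{D^j\tau\circ\tau^{-1}\}_{j\leq k}$, and the inductively bounded $\{D^l\tau^{-1}\}_{l<k}$, yielding $|D^k\tau^{-1}|\leq M'_m(d,\lambda,M_m)$.

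For (ii), taking $\theta=1$ in $\tau$ and using $\tau(\tau^{-1}(x))=x$ gives $\tau^{-1}(x)+\zeta(\tau^{-1}(x))=x$, whence $\zeta^{\ast}(x)=\tau^{-1}(x)-x=-\zeta(\tau^{-1}(x))$. Since $\tau^{-1}$ is a bijection of $\bR^d$, this yields \eqref{1.12.3.22} as an equality of suprema. The chain rule (Fa\`a di Bruno) applied to $\zeta^{\ast}=-\zeta\circ\tau^{-1}$ splits $D^k\zeta^{\ast}$ into a sum of terms, each a product of a single $D^j\zeta\circ\tau^{-1}$ for some $1\leq j\leq k$ with a polynomial in the already-bounded lower-order $D^l\tau^{-1}$; this yields \eqref{2.12.3.22}. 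For \eqref{3.12.3.22}, the identity $\bI + D\zeta^{\ast}(x) = D\tau^{-1}(x) = (D\tau(\tau^{-1}(x)))^{-1}$ gives $|\det(\bI+D\zeta^{\ast})| = 1/|\det D\tau\circ\tau^{-1}|$; for a general $\theta\in[0,1]$ I would write $\bI+\theta D\zeta^{\ast} = (1-\theta)\bI + \theta(\bI+D\zeta^{\ast})$, expand multilinearly, and reexpress the $\theta$-dependence in terms of $\bI + \theta' D\zeta$ evaluated at $\tau^{-1}(x)$ for a suitable $\theta'$, thereby extracting the factor $\lambda'\inf_\theta|\det(\bI+\theta D\zeta)|$.

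For (iii), Jacobi's formula gives $D\frc = \det(\bI+D\zeta^{\ast})\,\mathrm{tr}\!\bigl((\bI+D\zeta^{\ast})^{-1}D^2\zeta^{\ast}\bigr)$, and iterated application of the product and chain rules expresses $D^k\frc$ for $0\leq k\leq m-1$ as a polynomial in the entries of $(\bI+D\zeta^{\ast})^{-1}$ and the derivatives $\{D^{j+1}\zeta^{\ast}\}_{j\leq k}$; bounding each factor via \eqref{3.12.3.22} and \eqref{2.12.3.22} delivers \eqref{4.12.3.22}. The main technical obstacle will be the combinatorial bookkeeping for the higher derivatives of $\tau^{-1}$ in (i), where iterated Fa\`a di Bruno and Cramer formulas must be tracked to produce a single constant $M'_m$ depending only on $d$, $\lambda$, and $M_m$; once (i) is in hand, parts (ii) and (iii) follow by essentially routine differentiation.
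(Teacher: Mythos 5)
Your proposal is correct in substance but follows a genuinely different route from the paper. For (i) and (ii) the paper does not argue at all: it cites Lemma 6.1 of \cite{GG2}; you instead reprove these parts (inverse function theorem plus Hadamard's global inversion for the diffeomorphism property, Cramer's rule and an inductive Fa\`a di Bruno argument for the bounds in \eqref{Dinverse} and \eqref{2.12.3.22}), which is a sound self-contained substitute, and your idea for \eqref{3.12.3.22} is exactly the right one, since $\bI+\theta D\zeta^{\ast}(x)=\bigl(D\tau(\tau^{-1}(x))\bigr)^{-1}\bigl(\bI+(1-\theta)D\zeta(\tau^{-1}(x))\bigr)$, i.e.\ the ``suitable $\theta'$'' is $1-\theta$. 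For (iii) the paper uses Taylor's formula for $F(A)=\det A$, writing $\frc=\int_0^1\tfrac{\partial}{\partial A^{ij}}F(\bI+\theta D\zeta^{\ast})\,d\theta\,D_i\zeta^{\ast j}$, which displays the linear factor $D\zeta^{\ast}$ once and for all, so that differentiating $k$ times and invoking \eqref{2.12.3.22} yields \eqref{4.12.3.22} simultaneously for all $0\leq k\leq m-1$, including $k=0$. Your route via Jacobi's formula achieves the same for $k\geq1$ (every term of $D^k\frc$ carries one factor $D^{j}\zeta^{\ast}$, $2\leq j\leq k+1$, and the remaining factors are bounded using \eqref{2.12.3.22} together with the lower bound on $\det(\bI+D\zeta^{\ast})$ coming from \eqref{3.12.3.22}), but as written it does not cover $k=0$: the bound $|\frc|\leq N\sup|D\zeta|$ does not follow from iterating a formula for $D\frc$, and the crude estimate $|\frc|\leq|\det(\bI+D\zeta^{\ast})|+1$ is not proportional to $\sup|D\zeta|$. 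This is a one-line repair — note that $\det(\bI+B)-1$ is a polynomial in the entries of $B$ with no constant term (or use the paper's integral identity) and then apply \eqref{2.12.3.22} with $k=1$ — but it should be said. Two minor remarks: your constants give $\lambda''=1/\lambda$ in \eqref{Dinverse} rather than the stated dependence $\lambda''(d,M_1)$, which appears to be a slip in the lemma's formulation (the natural dependences are $\lambda'=\lambda'(d,M_1)$ and $\lambda''=\lambda''(\lambda)$); and since $\zeta^{\ast}$ is only $C^m$, the use of $D^2\zeta^{\ast}$ in Jacobi's formula is legitimate exactly when $m\geq2$, which is consistent because for $m=1$ only the $k=0$ case of \eqref{4.12.3.22} occurs.
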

\begin{proof}
Claims (i) and (ii) are Lemma 6.1 in \cite{GG2}. To prove (iii)
notice that for the function $F(A)=\det A$, considered as the function 
of the entries $A^{ij}$ of $d\times d$ real matrices $A$, 
by Taylor's formula 
$$
\frc=\det(\bI+D\zeta^{\ast})-\det \bI
=\int_0^1
\tfrac{\partial}{\partial A^{ij}}F(\bI+\theta D\zeta^{\ast})\,d\theta D_i\zeta^{\ast j}. 
$$
\end{proof}

\begin{corollary}                                                       \label{corollary 1.2.10.22}
For $\bR^d$-valued functions $\zeta$ on $\bR^d$ we define the operators 
$T^{\zeta}$, $I^{\zeta}$ and $J^{\zeta}$ by 
$$
T^{\zeta}\varphi(x)=\phi(x+\zeta(x))\quad\text{for $x\in\bR^d$, and} 
\quad
I^{\zeta}\varphi=T^{\zeta}\varphi-\varphi, 
\quad
J^{\zeta}\varphi=I^{\zeta}-\zeta^iD_i\varphi
$$
for differentiable functions $\varphi$ on $\bR^d$. 
Assume that $\zeta$ 
satisfies the conditions of Lemma \ref{lemma 1.14.3.22} with $m=2$. 
Then $\tau_{\theta\zeta}(x)=x+\theta\zeta(x)$, $x\in\bR^d$ 
are $C^2$-diffeomorphisms for each $\theta\in[0,1]$, and 
for every $v,\varphi\in C_0^{\infty}$ we have 
\begin{equation}                                                             \label{2.2.10.22}
(v,I^{\zeta}\varphi)=(I^{\zeta\ast}v,\varphi), 
\quad 
(v,J^{\zeta}\varphi)=(K^{\zeta}_i v, D_i\varphi), 
\end{equation}
with 
$$ 
I^{\zeta\ast}v(x)=-\int_0^1
D_i\Big(v(\tau^{-1}_{\theta\zeta}(x))\zeta^{i}(\tau^{-1}_{\theta\zeta}(x))
|{\rm det}D\tau^{-1}_{\theta\zeta}(x)|\Big)\,d\theta, 
$$
$$
K^{\zeta}_iv(x)=\int_0^1(\theta-1)
D_j\Big(v(\tau_{\theta\zeta}^{-1}(x))
\zeta^i(\tau^{-1}_{\theta\zeta}(x))\zeta^j(\tau^{-1}_{\theta\zeta}(x))
|{\rm{det}}D\tau^{-1}_{\theta\zeta}(x)|\Big)\,d\theta, 
\quad x\in\bR^d,  
$$
for $i=1,2,...,d$. Moreover, for every $x\in\bR^d$ we have 
\begin{equation}                                                         \label{3.2.10.22}
|I^{\eta\ast}v(x)|
\leq N\sup_{x\in\bR^d}
(|\zeta(x)|+|D\zeta(x)|)
\int_0^1|v(\tau^{-1}_{\theta\zeta}(x))|+|(Dv)(\tau^{-1}_{\theta\zeta}(x))|\,d\theta
\end{equation}
\begin{equation}                                                          \label{4.2.10.22}
|K_i^{\zeta}v(x)|
\leq N\sup_{x\in\bR^d}
(|\zeta(x)|^2+|D\zeta(x)|^2)
\int_0^1|v(\tau^{-1}_{\theta\zeta}(x))|+|(Dv)(\tau^{-1}_{\theta\zeta}(x))|\,d\theta
\end{equation}
with a constant $N=N(d,\lambda, M_1,M_2)$. 
\end{corollary}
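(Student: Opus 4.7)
The plan is to (a) derive both duality identities by combining a Taylor expansion of $\varphi(x+\zeta(x))$ along the path $\theta\mapsto x+\theta\zeta(x)$ with a change of variables driven by the diffeomorphisms $\tau_{\theta\zeta}$, and (b) read off the pointwise bounds on $I^{\zeta\ast}v$ and $K_i^\zeta v$ by expanding the resulting divergences via the product rule, using Lemma \ref{lemma 1.14.3.22} to absorb all factors coming from $\tau_{\theta\zeta}^{-1}$ and $|\det D\tau_{\theta\zeta}^{-1}|$ into a constant $N=N(d,\lambda,M_1,M_2)$.

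For the identities, the fundamental theorem of calculus gives
\begin{equation*}
I^\zeta\varphi(x)=\int_0^1 \zeta^i(x)(D_i\varphi)(\tau_{\theta\zeta}(x))\,d\theta,
\end{equation*}
while a Taylor expansion with integral remainder applied to $\theta\mapsto\varphi(x+\theta\zeta(x))$ yields
\begin{equation*}
J^\zeta\varphi(x)=\int_0^1 (1-\theta)\zeta^i(x)\zeta^j(x)(D_{ij}\varphi)(\tau_{\theta\zeta}(x))\,d\theta.
\end{equation*}
Pairing each with $v$, applying Fubini, and performing the substitution $y=\tau_{\theta\zeta}(x)$ (admissible by Lemma \ref{lemma 1.14.3.22}(i)) yields integrals with one and two derivatives of $\varphi$, respectively; a single integration by parts in each case, licit since $\varphi\in C_0^\infty$ and all other factors are $C^1$ with uniformly bounded first derivatives, removes all derivatives from $\varphi$ and produces the claimed formulas for $I^{\zeta\ast}v$ and $K_i^\zeta v$. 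The sign $(\theta-1)$ in $K_i^\zeta$ is just $-(1-\theta)$ absorbed from the integration by parts.

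For the pointwise bounds I would expand the divergence in the defining integrand of $I^{\zeta\ast}v$ by the product rule into three terms, in which $D_i$ falls respectively on $v(\tau_{\theta\zeta}^{-1})$, on $\zeta^i(\tau_{\theta\zeta}^{-1})$, or on $|\det D\tau_{\theta\zeta}^{-1}|$. The first produces $|Dv(\tau_{\theta\zeta}^{-1})|$ times a factor controlled by $|\zeta|_\infty$, the second produces $|v(\tau_{\theta\zeta}^{-1})|$ times a factor controlled by $|D\zeta|_\infty$, and the third produces $|v(\tau_{\theta\zeta}^{-1})|$ times a factor controlled by $|\zeta|_\infty$. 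Combining yields the bound on $|I^{\zeta\ast}v|$; the estimate for $|K_i^\zeta v|$ is identical in structure but carries two factors of $\zeta$ throughout, with cross terms $|\zeta||D\zeta|$ absorbed into $|\zeta|^2+|D\zeta|^2$ via $2ab\leq a^2+b^2$.

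The main point of care is that the term coming from $D_i|\det D\tau_{\theta\zeta}^{-1}|$ involves second derivatives of $\tau_{\theta\zeta}^{-1}$ and, implicitly, of $\zeta$. To preserve the stated factorization into $\sup_x(|\zeta|+|D\zeta|)$ (or its quadratic analogue) times an integral depending only on $v$, these second-order quantities must be fully hidden inside the constant $N$. This is precisely the content of Lemma \ref{lemma 1.14.3.22}(i) applied with $m=2$: the uniform bound on $D^2\tau_{\theta\zeta}^{-1}$ in terms of $d,\lambda,M_2$ ensures that $|D|\det D\tau_{\theta\zeta}^{-1}||$ is itself bounded by a pure constant, so no second derivative of $\zeta$ appears explicitly in the final estimate.
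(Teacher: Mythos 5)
Your proposal is correct and follows essentially the same route as the paper: Taylor/fundamental-theorem expansion of $I^{\zeta}\varphi$ and $J^{\zeta}\varphi$, pairing with $v$, the change of variables $y=\tau_{\theta\zeta}(x)$, one integration by parts to obtain \eqref{2.2.10.22}, and then the product rule together with the bounds \eqref{Dinverse} (with $m=2$) to absorb the derivatives of $|\det D\tau^{-1}_{\theta\zeta}|$ into the constant $N$. The only slight imprecision is the phrase that the integration by parts ``removes all derivatives from $\varphi$'' in the $J^{\zeta}$ case, where one derivative remains as $D_i\varphi$, exactly as in the stated identity $(v,J^{\zeta}\varphi)=(K^{\zeta}_iv,D_i\varphi)$; this does not affect the argument.
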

\begin{proof} 
Using Taylor's formula we have 
$$
(v, I^{\zeta}\varphi)=\int_{\bR^d}\int_0^1
v(x)(D_{i}\varphi)(\tau_{\theta\xi}(x))\xi^i(x)\,d\theta\,dx,
$$
$$
(v,J^{\zeta}\varphi)=\int_{\bR^d}\int_0^1(1-\theta)
v(x)(D_{ij}\varphi)(\tau_{\theta\xi}(x))\xi^i(x)\xi^j(x)\,d\theta\,dx, 
$$
and by a change of variables in the calculation of the integrals over $\bR^d$ 
and then integrating by parts we get the equations 
in \eqref{2.2.10.22}. Hence we get the estimates \eqref{3.2.10.22} 
and \eqref{4.2.10.22} by applying standard derivative rules 
and using the estimates in \eqref{Dinverse}. 
\end{proof}


\mysection{Sobolev estimates}
\label{sec estimates}

In this section we present some estimates which are needed in the subsequent sections. 
In the following lemmas note that by lower indices $i$ we mean the derivative with respect to $x^i$, i.e. $u_i=\frac{\partial}{\partial x^i}u(x)$.
For $\varepsilon>0$ we use the notation $k_{\varepsilon}$ for the Gaussian density function on $\bR^d$ 
with mean 0 and 
variance $\varepsilon$. For linear functionals $\Phi$, acting on a real vector space $V$
containing $\cS=\cS(\bR^d)$, the rapidly decreasing functions on $\bR^d$,  the mollification 
 $\Phi^{(\varepsilon)}$ is defined by 
 $$
 \Phi^{(\varepsilon)}(x)=\Phi(\ke(x-\cdot)), \quad x\in\bR^d.
 $$
In particular, when $\Phi=\mu$ is a (signed) measure from $\cS^{\ast}$, the dual of $\cS$, 
or $\Phi=f$ is a function from $\cS^{\ast}$, 
then 
$$
\mu^{(\varepsilon)}(x)=\int_{\bR^d}k_{\varepsilon}(x-y)\,\mu(dy),
\quad
f^{(\varepsilon)}(x)=\int_{\bR^d}k_{\varepsilon}(x-y)f(y)\,dy,\quad x\in\bR^d,   
$$
and, using the formal adjoint $L^*$, we write
$$
(L^*\mu)^{(\varepsilon)}(x):=\int_{\bR^d}L_yk_{\varepsilon}(x-y)\mu(dy),\quad x\in\R^d
$$
when $L$ is a linear operator on $V$ such that the integral is well-defined for every $x\in\bR^d$. 
Here the subscript $y$ in $L_y$ indicates that the operator $L$ acts in the $y$-variable of  
the function $\bar k_{\varepsilon}(x,y):= k_{\varepsilon}(x-y)$. 
For example, if $L$ is a differential operator of the form $a^{ij}D_{ij}+b^iD_i+c$,  
where $a^{ij}$, $b^i$ and $c$ are functions defined on $\bR^d$, then 
$$
(L^*\mu)^{(\varepsilon)}(x)
=\int_{\bR^d}(a^{ij}(y)\tfrac{\partial^2}{\partial y^i\partial y^j}
+b^i(y)\tfrac{\partial}{\partial y^i}+c(y))k_{\varepsilon}(x-y)\mu(dy). 
$$
We will often use the following well-known properties of mollifications with $k_{\varepsilon}$: 
\begin{enumerate}
\item[(i)] $|\varphi^{(\varepsilon)}|_{L_p}\leq |\varphi|_{L_p}$ 
for $\varphi\in L_p(\bR^d)$, $p\in[1,\infty)$;
\item[(ii)] $\mu^{(\varepsilon)}(\varphi):=\int_{\bR^d}\mu^{(\varepsilon)}(x)\varphi(x)\,dx
=\int_{\bR^d}\varphi^{(\varepsilon)}(x)\mu(dx)=:\mu(\varphi^{(\varepsilon)})$ for 
finite (signed) Borel measures $\mu$ on $\cB(\bR^d)$ and $\varphi\in L_p(\bR^d)$, $p\geq1$; 
\item[(iii)] $|\mu^{(\delta)}|_{L_p}\leq |\mu^{(\varepsilon)}|_{L_p}$ for $0\leq\varepsilon\leq\delta$,  
finite Borel measures $\mu$ on $\bR^d$ and $p\geq1$. 
This property follows immediately from (i) 
and the ``semigroup property" of the Gaussian kernel,  
\begin{equation}                                                                                               \label{sg}
k_{r+s}(y-z)=\int_{\bR^d}k_{r}(y-x)k_{s}(x-z)\,dx,
\quad \text{$y,z\in\bR^d$ and $r,s\in(0,\infty)$}. 
\end{equation}
\end{enumerate}
The following generalization of (iii) is also useful: for integers $p\geq 2$ we have 
\begin{equation}                                                                                       \label{rho}
\rho_{\varepsilon}(y):=\int_{\bR^d}\Pi_{r=1}^pk_{\varepsilon}(x-y_r)\,dx
=c_{p,\varepsilon}e^{-\sum_{1\leq r<s\leq p}|y_r-y_s|^2/(2\varepsilon p)}, 
\quad y=(y_1,....,y_p)\in\bR^{pd},
\end{equation} 
for $\varepsilon>0$, with a constant 
$c_{p,\varepsilon}=c_{p,\varepsilon}(d)=p^{-d/2}(2\pi\varepsilon)^{(1-p)d/2}$. 
This calculation can be found in \cite[Sec. 4]{GG2}.
Clearly, for every $r=1,2,...,p$ and $i=1,2,...,d$ 
\begin{equation}                                                                                               \label{partialrho}
\partial_{y^i_r}\rho_{\varepsilon}(y)=\tfrac{1}{\varepsilon p}\sum_{s=1}^p(y^i_s-y^i_r)\rho_{\varepsilon}(y), 
\quad y=(y_1,...,y_p)\in\bR^d, \quad y_r=(y^1_r...,y^d_r)\in\bR^d. 
\end{equation}
It is easy to see that 
$$
\sum_{r=1}^p\partial_{y_r^j}\rho_{\varepsilon}(y)=0\quad\text{for $y\in\bR^{pd}$, $j=1,2,...,d$},
$$
which we will often use in the form
\begin{equation}                                                                                              \label{Drho}
\partial_{y^j_r}\rho_{\varepsilon}(y)=-\sum_{s\neq r}^p\partial_{y_s^j}\rho_{\varepsilon}(y)
\quad\text{for $r=1,...,p$ and $j=1,2,...,d$}.
\end{equation} 
Moreover, we will use that for $q=1,2$, with a constant $N=N(d,p,q)$,
\begin{equation}                                                                                                \label{qrho}
\varepsilon^{-q}\sum_{s\neq r}|y_s-y_r|^{2q}\rho_{\varepsilon}
(y)\leq N\rho_{2\varepsilon}(y), \quad y\in\bR^{pd}.
\end{equation}

The case of $\alpha = 0$ in the following Lemmas in this section 
is proven in \cite{GG2} and hence this case will be omitted in the proofs.

The following estimates for $\mu\in\frM$ with density $u=d\mu/dx\in W^m_p$, for $m\geq 0$ and $p\geq 2$ even, will be useful in later sections.
In order for the left-hand side of these estimates to be well-defined, we require that
\begin{equation}
\label{second moment mu}
K_1\int_{\bR^d}|x|^2\,|u(x)|\,dx<\infty,
\end{equation}
where we use the formal convention that $0\cdot\infty = 0$, i.e. if $K_1=0$, 
then the second moment of $|\mu(dx)|=|u(x)|dx$ is not required to be finite.

\begin{lemma}                                                                                  \label{lemma dpe1}
Consider integers $m\geq 0$ and $p\geq2$ even.
Let $\sigma=(\sigma^{ik})$ be a Borel function on $\bR^d$ 
with values in $\bR^{d\times k}$, such that for some nonnegative constants 
$K_0$ and $L$ 
\begin{equation}
\label{dpc1}                                                                                         
|\sigma(x)|\leq K_0,
\quad \sum_{k=1}^{m+1}|D^k\sigma(x)|\leq L,
\end{equation} 
for all $x,y\in\bR^d$. Set $a^{ij}=\sigma^{ik}\sigma^{jk}/2$ for $i,j=1,2,...,d$. 
Let $\mu\in\frM$ such that it admits a density $u=d\mu/dx\in W^m_p$ 
which satisfies \eqref{second moment mu}.
Then for $\varepsilon>0$ 
 we have  
\begin{align}                                                                
A^{\alpha}
:=&p((D^{\alpha}\mu^{(\varepsilon)})^{p-1}, 
D^{\alpha}((a^{ij}D_{ij})^*\mu)^{(\varepsilon)})                                                          \nonumber\\
 &+\tfrac{p(p-1)}{2}
 ((D^{\alpha}\mu^{(\varepsilon)})^{p-2}D^{\alpha}((\sigma^{ik}D_i)^*\mu)^{(\varepsilon)}, 
 D^{\alpha}((\sigma^{jk}D_j)^*\mu)^{(\varepsilon)})
\leq NL^2|u|^p_{W^m_p}
\label{dpe1}                                                                                                                                                                                        
\end{align}
for multi-indices $\alpha=(\alpha_1,...,\alpha_d)$ such that 
$0\leq|\alpha|\leq m$, where 
$N$ is a constant depending only on $d$, $m$ and $p$. 
\end{lemma}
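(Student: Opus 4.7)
The case $|\alpha|=0$ is proved in \cite{GG2}, so I focus on $1\leq|\alpha|\leq m$. Since $\mu$ has density $u\in W^m_p$, one has $D^\alpha\mu^{(\varepsilon)} = v^{(\varepsilon)}$ with $v:=D^\alpha u\in L_p$, and a Leibniz expansion of the formal adjoints gives
$$
D^\alpha\bigl((a^{ij}D_{ij})^*\mu\bigr)^{(\varepsilon)} = \bigl((a^{ij}D_{ij})^*\tilde\nu\bigr)^{(\varepsilon)} + \mathfrak{R}^{(1)}_\alpha,
\qquad
D^\alpha\bigl((\sigma^{ik}D_i)^*\mu\bigr)^{(\varepsilon)} = \bigl((\sigma^{ik}D_i)^*\tilde\nu\bigr)^{(\varepsilon)} + \mathfrak{R}^{(2,k)}_\alpha,
$$
where $\tilde\nu$ is the signed measure with density $v$, and the remainders $\mathfrak{R}^{(1)}_\alpha, \mathfrak{R}^{(2,k)}_\alpha$ collect those terms in which at least one derivative from $D^\alpha$ falls on the coefficient $a^{ij}$ or $\sigma^{ik}$. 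Substituting this decomposition into the definition of $A^\alpha$ produces $A^\alpha = \widetilde A^{(0)}(\tilde\nu) + \mathcal{C}^\alpha$, with $\widetilde A^{(0)}(\tilde\nu)$ the $\alpha=0$ combination with $\tilde\nu$ in place of $\mu$, and $\mathcal{C}^\alpha$ a finite sum of cross and double-commutator inner products.

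The principal part $\widetilde A^{(0)}(\tilde\nu)$ is controlled by the $\alpha=0$ case of the lemma applied to $\tilde\nu$ (the second-moment condition \eqref{second moment mu} being vacuous in the present bounded-$\sigma$ setting), which gives $\widetilde A^{(0)}(\tilde\nu)\leq NL^2|v|^p_{L_p}\leq NL^2|u|^p_{W^m_p}$.

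For the remainder $\mathcal{C}^\alpha$, each summand is an inner product featuring at least one factor $(D_{ij}(D^\beta a^{ij}D^{\alpha-\beta}u))^{(\varepsilon)}$ or $(D_i(D^\gamma\sigma^{ik}D^{\alpha-\gamma}u))^{(\varepsilon)}$ with $|\beta|,|\gamma|\geq 1$. When $|\alpha|\leq m-1$, transferring the outer $D_i$ and $D_{ij}$ onto the $v^{(\varepsilon)}$-factors via the mollifier identity $(D_if)^{(\varepsilon)}=D_if^{(\varepsilon)}$ and integrating by parts in $\bR^d$ reduces each such inner product to a H\"older-bounded product of factors $D^\gamma u$ of order $\leq m$ multiplied by bounded derivatives of $\sigma$, and exploiting the same drift-diffusion cancellations as in the $\alpha=0$ argument gives the desired bound $NL^2|u|^p_{W^m_p}$.

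The main obstacle is the endpoint $|\alpha|=m$: there the Leibniz expansion inevitably produces factors $D^{m+1}u$, whose $L_p$ norm is not controlled by $|u|_{W^m_p}$ uniformly in $\varepsilon$. I plan to handle these terms through the joint Gaussian representation of \eqref{rho}, integrating by parts in $\bR^{pd}$ to shift all surplus derivatives onto $\rho_\varepsilon$. The kernel cancellation $\sum_r\partial_{y^j_r}\rho_\varepsilon=0$ from \eqref{Drho} then permits replacing products $\sigma(y_r)\sigma(y_s)$ by pairs of Lipschitz differences $\sigma(y_r)-\sigma(y_s)$, each of size $\leq L|y_r-y_s|$; the residual factor $|y_r-y_s|^2$ is absorbed into $\rho_{2\varepsilon}$ via \eqref{qrho}, and H\"older's inequality on $\bR^{pd}$ then closes the estimate at $NL^2|u|^p_{W^m_p}$, completing the proof.
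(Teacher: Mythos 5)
Your high-level ingredients (the joint Gaussian representation \eqref{rho}, the identity \eqref{Drho}, absorption of $|y_r-y_s|$ factors via \eqref{partialrho} and \eqref{qrho}) are the right ones, but as written the argument has real gaps. The most serious one is quantitative: the lemma's constant is $NL^2$ with $N=N(d,m,p)$, independent of $K_0$. After your split $A^\alpha=\widetilde A^{(0)}(\tilde\nu)+\mathcal{C}^\alpha$, every cross term in $\mathcal{C}^\alpha$ in which only one $\sigma$-factor is differentiated (e.g.\ $D^\beta a^{ij}=\tfrac12(D^\beta\sigma^{ik}\,\sigma^{jk}+\sigma^{ik}D^\beta\sigma^{jk})$, or $\sigma^{ik}(y_{p-1})D^\gamma\sigma^{jk}(y_p)$ from the quadratic part) still carries an underived $\sigma$, bounded only by $K_0$. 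Your proposed treatment for $|\alpha|\le m-1$ — integrate by parts and apply H\"older "with bounded derivatives of $\sigma$" — therefore yields a bound of the form $N(K_0L+L^2)|u|^p_{W^m_p}$, not $NL^2|u|^p_{W^m_p}$. Removing the $K_0$ requires the same cancellation that produces the difference kernel $-\tfrac12(\sigma^{ik}(y_r)-\sigma^{ik}(y_s))(\sigma^{jk}(y_r)-\sigma^{jk}(y_s))$, but this cancellation pairs the second-order remainder against the quadratic cross terms, i.e.\ it acts across the very split you made; asserting that "the same drift-diffusion cancellations" apply inside $\mathcal{C}^\alpha$ is exactly the point that needs proof and is not supplied. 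The paper avoids this by performing the cancellation \emph{before} any Leibniz expansion: it symmetrizes in the joint representation, uses \eqref{Drho} to arrive at \eqref{dpA} with the difference kernel valid for every $\alpha$, and only then integrates $D^{p\alpha}$ by parts onto the densities \eqref{dpip}, so that in each resulting case ((i)–(iii)) every underived $\sigma$ appears inside a Lipschitz difference or multiplied by another derived $\sigma$.

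Two further points. First, your endpoint case $|\alpha|=m$ — which is the actual heart of the lemma — is announced as a plan ("I plan to handle these terms \dots") rather than carried out, and the plan as stated ("shift all surplus derivatives onto $\rho_\varepsilon$") is the opposite of what closes the estimate: extra derivatives on $\rho_\varepsilon$ cost factors $\varepsilon^{-1/2}$ that can only be paid for by matching Lipschitz-difference factors $|y_r-y_s|$, so the bookkeeping of which derivatives stay on $\rho_\varepsilon$, which go onto the densities (keeping the order of $u$-derivatives $\le m$), and where the differences come from is precisely the content of the paper's case analysis and is missing here. Second, invoking the $\alpha=0$ result for $\tilde\nu$ with density $v=D^\alpha u$ is not immediate: $D^\alpha u$ need not lie in $L_1$, so $\tilde\nu$ need not be a finite signed measure, and the analogue of \eqref{second moment mu} for $v$ is not assumed (your remark that it is "vacuous" holds only if $K_1=0$, which is not part of this lemma). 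One would have to reinspect the $\alpha=0$ proof to see that a density in $L_p$ suffices for the Fubini steps, or argue differently; as stated this is an unjustified application of a black box outside its hypotheses.
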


\begin{proof}
Note first that using 
\begin{equation}
\label{wellposedness lefthand side1}
\sup_{x\in\bR^d}\sum_{k=0}^{m+2}|D^k\ke(x)|<\infty,\quad
\sup_{x\in\bR^d}\sum_{k=0}^{m+2} |D^k\rho_\varepsilon(x)|<\infty,\quad\text{for all }\varepsilon>0
\end{equation}
and
\begin{equation}
\label{wellposedness lefthand side2}
\int_{\bR^d}(1+|x|+|x|^2)\,|u(x)|\,dx<\infty,
\end{equation}
as well as the conditions on $\sigma$, it is easy to verify that the left-hand side of \eqref{dpe1} is well-defined.
Changing the order of taking derivatives and integrals, then writing integer powers of integrals 
as iterated integrals and using 
$$
D^{\alpha}_xk_{\varepsilon}(x-y)=(-1)^{|\alpha|}D^{\alpha}_yk_{\varepsilon}(x-y),
$$
we have 
\begin{align}
((D^{\alpha}\mu^{(\varepsilon)}(x))^{p-1}=&\int_{\bR^{(p-1)d}}
\Pi_{r=1}^{p-1}D^{\alpha}_xk_{\varepsilon}(x-y_r)\,\mu(dy_1)...\mu(dy_{p-1})                  \nonumber \\  
=& \int_{\bR^{(p-1)d}}(-1)^{(p-1)|\alpha|}
D^{\alpha}_{y_1}...D^{\alpha}_{y_{p-1}}\Pi_{r=1}^{p-1}
k_{\varepsilon}(x-y_r)\,\mu(dy_1)...\mu(dy_{p-1}),                                                            \nonumber\\
D^{\alpha}((a^{ij}D_{ij})^*\mu)^{(\varepsilon)}(x)=&\int_{\bR^{d}}
a^{ij}(y_p)\partial_{y^i_p}\partial_{y^j_p}D^{\alpha}_x
k_{\varepsilon}(x-y_p)\,\mu(dy_{p})                                                                                     \nonumber \\  
=& \int_{\bR^{d}}
(-1)^{|\alpha|}a^{ij}(y_p)
\partial_{y^i_p}\partial_{y^j_p}D^{\alpha}_{y_p}k_{\varepsilon}(x-y_p)\,\mu(dy_{p}),                            \nonumber 
\end{align}
and hence for their product we get 
\begin{equation}                                                                                                             \label{ps}
(D^{\alpha}\mu^{(\varepsilon)})^{p-1}D^{\alpha}((a^{ij}D_{ij})^*\mu)^{(\varepsilon)}(x)
=\int_{\bR^{pd}}a^{ij}(y_p)\partial_{y^i_p}\partial_{y^j_p}
D^{p{\alpha}}_y\Pi_{r=1}^pk_{\varepsilon}(x-y_r)\mu_p(dy), 
\end{equation}
where $D^{p\alpha}_y:=D^{\alpha}_{y^1}...D^{\alpha}_{y_{p}}$ 
and $\mu(dy):=\mu(dy_1)...\mu(dy_{p})$. 
Similarly,
$$
(D^{\alpha}\mu^{(\varepsilon)})^{p-2}D^{\alpha}((\sigma^{ik}D_i)^*\mu)^{(\varepsilon)} 
 D^{\alpha}((\sigma^{jk}D_j)^*\mu)^{(\varepsilon)}(x)
 $$
$$
 =\int_{\bR^{pd}}\sigma^{ik}(y_{p-1})\sigma^{jk}(y_p)\partial_{y^i_{p-1}}\partial_{y^j_p}
D^{p{\alpha}}_y\Pi_{r=1}^pk_{\varepsilon}(x-y_r)\mu_p(dy). 
$$
Adding this to \eqref{ps}, then integrating against $dx$ over $\bR^d$ and using \eqref{rho} we obtain
$$
A=\int_{\bR^{pd}}
\left(pa^{ij}(y_p)\partial_{y^i_p}\partial_{y^j_p}
+\tfrac{p(p-1)}{2}
\sigma^{ik}(y_{p-1})\sigma^{jk}(y_{p})\partial_{y^i_{p-1}}\partial_{y^j_p}\right)
D^{p{\alpha}}_y\rho_{\varepsilon}(y)\,\mu_p(dy). 
$$
Using here the symmetry of $D^{p{\alpha}}_y\rho_{\varepsilon}(y)$ and $\mu_p(dy)$ 
in $y\in\bR^{dp}$ and then interchanging differential operators 
we get 
$$
A=\int_{\bR^{pd}}
\Big(\sum_{r=1}^pa^{ij}(y_r)D^{p{\alpha}}_y\partial_{y^i_r}\partial_{y^j_r}
+\sum_{1\leq r<s\leq p}
\sigma^{ik}(y_{r})\sigma^{jk}(y_{s})D^{p{\alpha}}_y\partial_{y^i_{r}}\partial_{y^j_s}\Big)
\rho_{\varepsilon}(y)\,\mu_p(dy)
$$
Using 
$$
\partial_{y^j_r}\rho_{\varepsilon}(y)
=-\sum_{s\neq r}\partial_{y^j_s}\rho_{\varepsilon}(y), 
$$
see \eqref{Drho}, we have 
$$
\sum_{r=1}^pa^{ij}(y_r)D^{p{\alpha}}_y\partial_{y^i_r}\partial_{y^j_r}\rho_{\varepsilon}(y)
=-\sum_{1\leq r<s\leq p}(a^{ij}(y_r)+a^{ij}(y_s))
D^{p{\alpha}}_y\partial_{y^i_r}\partial_{y^j_s}\rho_{\varepsilon}(y),   
$$
and due to $a^{ij}=\sigma^{ik}\sigma^{jk}/2$ we have 
$$
-2a^{ij}(y_r,y_s):=-2(a^{ij}(y_r)+a^{ij}(y_s))
+\sigma^{ik}(y_{r})\sigma^{jk}(y_{s})+\sigma^{ik}(y_{s})\sigma^{jk}(y_{r})
$$
$$
=-(\sigma^{ik}(y_r)-\sigma^{ik}(y_s))(\sigma^{jk}(y_r)-\sigma^{jk}(y_s)). 
$$
Hence 
\begin{equation}                 
A=-\tfrac{1}{2}\sum_{r\neq s}                                                                                         \label{dpA}
\int_{\bR^{pd}}a^{ij}(y_r,y_s)
D^{p{\alpha}}_y\partial_{y^i_r}\partial_{y^j_s}\rho_{\varepsilon}(y)\,\mu_p(dy), 
\end{equation}
that by integration by parts gives 
\begin{equation}                                                                                      \label{dpip}
=-\tfrac{1}{2}\sum_{\beta\leq\alpha}\sum_{\gamma\leq\alpha}
c^{\alpha}_{\beta}c^{\alpha}_{\gamma}
\int_{\bR^{pd}}\sum_{r\neq s}a^{ij}_{\beta\gamma}(y_r,y_s)\partial_{y^i_r}\partial_{y^j_s}
\rho_{\varepsilon}(y)u_{\bar\beta}(y_r)u_{\bar\gamma}(y_s)\Pi_{q\neq r, q\neq s}u_{\alpha}(y_q)
\,dy, 
\end{equation}
where 
$a^{ij}_{\beta\gamma}(x,r)
:=\partial_{x}^{\beta}\partial^{\gamma}_{r}a^{ij}(x,r)$ and 
$u_{\delta}(x):=\partial^{\delta} u(x)$ for $x,r\in\bR^d$, 
for multi-indices $\beta$, $\gamma$ and $\delta$, $\bar\delta:=\alpha-\delta$ for multi-indices 
$\delta\leq \alpha$ (i.e. $\delta_i\leq \alpha_i$ for $i=1,2,...,d$), 
$
c^{\alpha}_{\delta}=\Pi_{i=1}^dc^{\alpha_i}_{\delta_i} 
$ 
with binomial coefficients $c^n_k$ for integers $0\leq k\leq n$, 
$$
u(y):=u(y_1)....u(y_p)\quad \text{for $y=(y_1,...,y_p)\in\bR^{dp}$}, 
$$
and $dy=dy_1...dy_p$ is the Lebesgue measure on 
$\bR^{pd}$. 
For each $\beta\leq \alpha$ and $\gamma\leq\alpha$ we are going to estimate the 
integrand 
$$
f^{\beta\gamma}(y):=
\sum_{r\neq s}a^{ij}_{\beta\gamma}(y_r,y_s)
\partial_{y^i_r}\partial_{y^j_s}\rho_{\varepsilon}(y)
u_{\bar\beta}(y_r)u_{\bar\gamma}(y_s)\Pi_{q\neq r, q\neq s}u_{\alpha}(y_q),\quad y\in\bR^{dp}, 
\quad \beta\leq \alpha,\,\,\gamma\leq \alpha
$$
in the integral in \eqref{dpip}. Because of the symmetry in 
$\beta$ and $\gamma$, we need only consider the following cases: 
(i) $|\beta|\geq 1$ and $|\gamma|\geq 1$, (ii) $|\beta|\geq1$ and $\gamma=0$ 
and (iii) $\beta=\gamma=0$. To proceed with the calculations in each of these cases, 
for functions $h=h(y)$ and $g=(y)$ of $y\in\bR^{pd}$ 
we will use the notations $h\sim g$  if the integral of $g-h$ 
against $dy$ over $\bR^{pd}$ is zero. In case (i) by integration by parts we have 
$$
f^{\beta\gamma}\sim \sum_{j=1}^4f_j^{\beta\gamma}
$$
with 
\begin{align}
f_1^{\beta\gamma}:=&
\sum_{r\neq s}\partial_{y^i_r}\partial_{y^j_s}a^{ij}_{\beta\gamma}(y_r,y_s)
\rho_{\varepsilon}(y)
u_{\bar\beta}(y_r)u_{\bar\gamma}(y_s)\Pi_{q\neq r, q\neq s}u_{\alpha}(y_q),        \nonumber\\
f_2^{\beta\gamma}:=&
\sum_{r\neq s}\partial_{y^j_s}a^{ij}_{\beta\gamma}(y_r,y_s)
\rho_{\varepsilon}(y)
\partial_{y^i_r}u_{\bar\beta}(y_r)u_{\bar\gamma}(y_s)\Pi_{q\neq r, q\neq s}u_{\alpha}(y_q),       \nonumber\\
f_3^{\beta\gamma}:=&
\sum_{r\neq s}\partial_{y^i_r}a^{ij}_{\beta\gamma}(y_r,y_s)
\rho_{\varepsilon}(y)
u_{\bar\beta}(y_r)\partial_{y^j_s}u_{\bar\gamma}(y_s)
\Pi_{q\neq r, q\neq s}u_{\alpha}(y_q),       \nonumber\\
f_4^{\beta\gamma}:=&
\sum_{r\neq s}a^{ij}_{\beta\gamma}(y_r,y_s)
\rho_{\varepsilon}(y)
\partial_{y^i_r}u_{\bar\beta}(y_r)\partial_{y^j_s}u_{\bar\gamma}(y_s)
\Pi_{q\neq r, q\neq s}u_{\alpha}(y_q).       \nonumber\\
\end{align}
It is easy to see that for $j=1,2,3,4$
$$
|f_j^{\beta\gamma}(y)|\leq NL^2\rho_{\varepsilon}(y)
\sum_{|\delta|\leq m}|u_{\delta}(y_1)|...\sum_{|\delta|\leq m}|u_{\delta}(y_p)|, 
\quad (y_1,y_2,...,y_p)\in\bR^{pd}
$$
with a constant $N=N(d,m,p)$.  Hence in the case (i) we get 
\begin{align}
\int_{\bR^{pd}}f^{\beta\gamma}(y)\,dy
\leq &
NL^2\int_{\bR^{pd}}
\sum_{|\delta|\leq m}|u_{\delta}(y_1)|...\sum_{|\delta|\leq m}|u_{\delta}(y_p)|
\rho_{\varepsilon}(y)\,dy                                                                       \nonumber\\
=&NL^2\int_{\bR^{pd}}\int_{\bR^d}
\sum_{|\delta|\leq m}|u_{\delta}(y_1)|...\sum_{|\delta|\leq m}|u_{\delta}(y_p)| 
\Pi_{r=1}^pk_{\varepsilon}(x-y_r)\,dx\,dy                                                    \nonumber\\                                       
\leq& N'L^2\sum_{|\delta|\leq m}||D^{\delta}u|^{(\varepsilon)}|^p_{L_p}.    \nonumber
\end{align}
with constants $N$ and $N'$ depending only on $d$, $m$ and $p$. 
Integrating by parts in the case (ii) we have 
$$
f^{\beta0}\sim-f_1^{\beta0}-f_2^{\beta0}
$$
with 
$$
f_1^{\beta0}=\sum_{r\neq s}\partial_{y^i_r}a^{ij}_{\beta0}(y_r,y_s)\partial_{y^j_s}
\rho_{\varepsilon}(y)u_{\bar\beta}(y_r)\Pi_{q\neq r}u_{\alpha}(y_q)
$$
$$
f_2^{\beta0}=\sum_{r\neq s}a^{ij}_{\beta0}(y_r,y_s)\partial_{y^j_s}
\rho_{\varepsilon}(y)
\partial_{y^i_r}u_{\bar\beta}(y_r)\Pi_{q\neq r}u_{\alpha}(y_q). 
$$
Clearly, for $r\neq s$ we have 
$$
\partial_{y^i_r}a^{ij}_{\beta0}(y_r,y_s)=g^{\beta,j}(y_r,y_s)+h^{\beta,j}(y_r), 
$$
with 
\begin{align}
g^{\beta,j}(y_r,y_s)=&\partial_{y_r^i}\partial_{y_r}^{\beta}\sigma^{ik}(y_r)
(\sigma^{jk}(y_r)-\sigma^{jk}(y_s))
+\partial_{y_r^i}\partial_{y_r}^{\beta}\sigma^{jk}(y_r)
(\sigma^{ik}(y_r)-\sigma^{ik}(y_s)),                                                                                     \nonumber\\
h^{\beta,j}(y_r)=&\sum_{1\leq|\delta|,\delta<\beta(i)}c^{\beta(i)}_{\delta}
\partial^{\delta}_{y_r}\sigma^{ik}(y_r)\partial^{\beta(i)-\delta}_{y_r}\sigma^{jk}(y_r), \nonumber
\end{align}
where the multi-index $\beta(i)$ is defined by 
$\partial^{\beta(i)}=\partial_{y^i_r}\partial^{\beta}_{y_r}$. Thus 
$$
f_1^{\beta0}=f_{11}^{\beta0}+f_{12}^{\beta0}
$$
with
\begin{align}                                                                          
f_{11}^{\beta0}=&\sum_{r=1}^p\sum_{s\neq r}g^{\beta,j}(y_r,y_s)
\partial_{y^j_s}\rho_{\varepsilon}(y)
u_{\bar\beta}(y_r)\Pi_{q\neq r}u_{\alpha}(y_q),                                                                       \nonumber\\
f_{12}^{\beta0}=&\sum_{r=1}^p\sum_{s\neq r}h^{\beta,j}(y_r)\partial_{y^j_s}
\rho_{\varepsilon}(y)u_{\bar\beta}(y_r)\Pi_{q\neq r}u_{\alpha}(y_q).    \nonumber\\        
\end{align}                                                                             
Since 
$$
|g^{\beta,j}(y_r,y_s)|\leq NL^2|y_r-y_s|\quad j=1,2,...,p,
$$
for some $N = N(d,m,p)$, taking into account \eqref{partialrho} 
we have 
$$
|g^{\beta,j}(y_r,y_s)\partial_{y^j_s}\rho_{\varepsilon}(y)|\leq 
\tfrac{N}{p\varepsilon}\sum_{1\leq k<l\leq p}^p|y_k-y_l|^2\rho_{\varepsilon}(y)
\leq N'\rho_{2\varepsilon}(y)  
$$
 and hence 
$$
|f_{11}^{\beta0}|\leq N'\rho_{2\varepsilon}(y)
\sum_{|\delta|\leq m}|u_{\delta}(y_1)|...\sum_{|\delta|\leq m}|u_{\delta}(y_p)| 
$$
with a constant $N'=N'(d,m,p)$. 
Remembering  \eqref{Drho} 
by integration by parts we obtain 
$$
f^{\beta0}_{12}=-\sum_{r=1}^ph^{\beta,j}(y_r)
\partial_{y^j_r}\rho_{\varepsilon}(y)u_{\bar\beta}(y_r)\Pi_{q\neq r}u_{\alpha}(y_q)
\sim f^{\beta0}_{121}+f^{\beta0}_{122}
$$
with 
$$
f^{\beta0}_{121}=
\sum_{r=1}^ph^{\beta,j}(y_r)
\rho_{\varepsilon}(y)\partial_{y^j_r}u_{\bar\beta}(y_r)\Pi_{q\neq r}u_{\alpha}(y_q), 
$$
$$
f^{\beta0}_{122}=\sum_{r=1}^p\partial_{y^j_r}h^{\beta,j}(y_r)
\rho_{\varepsilon}(y)u_{\bar\beta}(y_r)\Pi_{q\neq r}u_{\alpha}(y_q).
$$
Hence noting that 
$$
|h^{\beta,j}(y_r)|+|\partial_{y^j_r}h^{\beta,j}(y_r)|\leq NL^2
$$
with a constant $N=N(d,m,p)$, we get 
$$
|f^{\beta0}_{121}+f^{\beta0}_{122}|
\leq 
NL^2\rho_{\varepsilon}(y)
\sum_{|\delta|\leq m}|u_{\delta}(y_1)|...\sum_{|\delta|\leq m}|u_{\delta}(y_p)| 
$$
Consequently, for a constant $N'=N'(d,m,p)$,
\begin{align}
\int_{\bR^{pd}}f_1^{\beta0}(y)\,dy
\leq &
NL^2\int_{\bR^{pd}}
\sum_{|\delta|\leq m}|u_{\delta}(y_1)|...\sum_{|\delta|\leq m}|u_{\delta}(y_p)|
\rho_{2\varepsilon}(y)\,dy                                                                       \nonumber\\
\leq& N'L^2\sum_{|\delta|\leq m}||D^{\delta}u|^{(2\varepsilon)}|^p_{L_p}
\leq N'L^2\sum_{|\delta|\leq m}||D^{\delta}u|^{(\varepsilon)}|^p_{L_p}.     \label{f1}
\end{align}
Now we are going to estimate the integral of $f^{\beta0}_{2}$. If $|\beta|=1$, then 
$$
|a^{ij}_{\beta0}(y_r,y_s)|\leq NL^2|y_r-y_s|, 
$$
and taking into account  \eqref{partialrho}, we get 
$$
|f_{2}^{\beta0}|\leq NL^2\rho_{2\varepsilon}(y)
\sum_{|\delta|\leq m}|u_{\delta}(y_1)|...\sum_{|\delta|\leq m}|u_{\delta}(y_p)| 
$$
with $N=N(d,p,m)$ in the same way as $|f_{11}|$ is estimated. Hence, 
as above,  
\begin{equation}
\int_{\bR^{pd}}f_2^{\beta0}(y)\,dy                                                                    
\leq NL^2\sum_{|\delta|\leq m}||D^{\delta}u|^{(2\varepsilon)}|^p_{L_p}
\leq NL^2\sum_{|\delta|\leq m}||D^{\delta}u|^{(\varepsilon)}|^p_{L_p}.      \label{f2}
\end{equation}
for $|\beta|=1$. 
If $|\beta|\geq2$, then 
$$
a^{ij}_{\beta0}(y_r,y_s)=g^{\beta,ij}(y_r,y_s)+h^{\beta,ij}(y_r)
$$
with 
\begin{align}
g^{\beta,ij}(y_r,y_s)=
&\partial^{\beta}_{y_r}\sigma^{ik}(y_r)
(\sigma^{jk}(y_r)-\sigma^{jk}(y_s))
+ \partial^{\beta}_{y_r}\sigma^{jk}(y_r)
(\sigma^{ik}(y_r)-\sigma^{ik}(y_s))                                                                        \nonumber\\
h^{\beta,ij}(y_r)=
&\sum_{1\leq|\delta|,\delta<\beta}c^{\beta}_{\delta}\partial^{\delta}_{y_r}\sigma^{ik}(y_r)
\partial^{\beta-\delta}_{y_r}\sigma^{jk}(y_r).                                                              \nonumber
\end{align}
Noticing that for a constant $N=N(d,m,p)$, 
$$
\sum_{i,j}|g^{\beta,ij}(y_r,y_s)|\leq NL^2|y_r-y_s|
$$
and 
$$
\sum_{ij}|h^{\beta,ij}(y_r)|+\sum_{ij}|\partial_{y^j_r}h^{\beta,ij}(y_r)|\leq NL^2, 
$$
we obtain \eqref{f2} for $|\beta|\geq 2$ in the same way as the integral of $f_1^{\beta0}$ 
is estimated. It remains to consider the case (iii), i.e., to estimate the integral 
of $f^{00}$. Since 
$$
|a^{ij}_{00}(y_r,y_s)|\leq NL^2|y_r-y_s|^2
$$
with a constant $N=N(d,m,p)$ and 
$$
\partial_{y^i_r}\partial_{y^{j}_s}\rho_{\varepsilon}(y)
=\tfrac{1}{p^2\varepsilon^2}\sum_{k=1}^p\sum_{l=1}^p(y_k^i-y^i_r)(y_l^j-y^j_s)
+\frac{1}{p\varepsilon}\delta_{ij}, 
$$
we have for a constant $N'=N'(d,m,p)$,
\begin{align}
|a^{ij}_{00}(y_r,y_s)\partial_{y^i_r}\partial_{y^{j}_s}\rho_{\varepsilon}(y)|
\leq&
 \tfrac{N}{\varepsilon^2}L^2\sum_{1\leq k<l\leq p}|y_k-y_l|^4\rho_{\varepsilon}(y)
+\tfrac{N}{\varepsilon}L^2\sum_{1\leq k<l\leq p}|y_k-y_l|^2\rho_{\varepsilon}(y)              \nonumber\\
\leq &
N'L^2\rho_{2\varepsilon}(y) \quad\text{for  $y=(y_1,...,y_p)\in\bR^{pd}$}.                                                                       \nonumber                                                                                                                                                                              
\end{align}
Hence 
$$
|f^{00}(y)|\leq NL^2\rho_{2\varepsilon}(y)\Pi_{r=1}^p|u_{\alpha}(y_r)|, 
$$
that gives
$$ 
\int_{\bR^{pd}}f^{00}(y)\,dy\leq NL^2||D^{\alpha}u|^{(2\varepsilon)}|^p_{L_p}
\leq NL^2||D^{\alpha}u|^{(\varepsilon)}|^p_{L_p} 
$$
with a constant $N=N(d,m,p)$, and we finish the proof of \eqref{dpe1} 
by using $|v^{(\varepsilon)}|_{L_p}\leq |v|_{L_p}$ for $v\in L_p(\bR^d)$. 
\end{proof}

\begin{corollary}
\label{p3 cor lemma dpe1}
Let the conditions of Lemma \ref{lemma dpe1} hold for integers $m\geq 0$ and $p\geq2$ even.
Then for $\varepsilon>0$ 
 we have  
$$
((D^{\alpha}\mu^{(\varepsilon)})^{p-1}, 
D^{\alpha}((a^{ij}D_{ij})^*\mu)^{(\varepsilon)})
\leq  NL^2|u|^p_{W^m_p} 
$$
for multi-indices $\alpha=(\alpha_1,...,\alpha_d)$ such that 
$0\leq|\alpha|\leq m$, where 
$N$ is a constant depending only on $d$, $m$ and $p$. 
\end{corollary}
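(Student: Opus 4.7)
The plan is to deduce the corollary essentially immediately from Lemma \ref{lemma dpe1} by showing that the second term appearing in the definition of the quantity $A^{\alpha}$ is pointwise non-negative, and can therefore be discarded from the left-hand side of the inequality without weakening the upper bound.

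The key observation I would exploit is that $p\geq 2$ is an even integer, so $p-2$ is a non-negative even integer (with the convention $(\cdot)^{0}\equiv 1$ when $p=2$), and hence $(D^{\alpha}\mu^{(\varepsilon)}(x))^{p-2}\geq 0$ for all $x\in\bR^{d}$. Moreover, writing $C^{k}(x):=D^{\alpha}((\sigma^{ik}D_{i})^{\ast}\mu)^{(\varepsilon)}(x)$ with the summation convention on $i$, the second term of $A^{\alpha}$ can be rewritten, after performing the implicit summation in $k$, as
$$
\tfrac{p(p-1)}{2}\int_{\bR^{d}}(D^{\alpha}\mu^{(\varepsilon)}(x))^{p-2}\sum_{k=1}^{d_{1}}|C^{k}(x)|^{2}\,dx\;\geq\;0.
$$
This positivity simply reflects the fact that, by construction, the diffusion matrix $a^{ij}=\sigma^{ik}\sigma^{jk}/2$ is positive semidefinite, and that the Sobolev-type pairing involved here preserves the resulting sum-of-squares structure.

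With this in hand, the proof reduces to a one-line manipulation: subtracting the non-negative second term from $A^{\alpha}$ preserves the upper bound $NL^{2}|u|_{W^{m}_{p}}^{p}$ of Lemma \ref{lemma dpe1}, so
$$
p((D^{\alpha}\mu^{(\varepsilon)})^{p-1},D^{\alpha}((a^{ij}D_{ij})^{\ast}\mu)^{(\varepsilon)})\leq NL^{2}|u|_{W^{m}_{p}}^{p},
$$
and absorbing the factor $1/p$ into the constant $N$ finishes the argument. I do not anticipate any genuine obstacle: the statement is in essence a positivity-driven corollary of the sharper estimate already established, and no additional technical machinery beyond the evenness of $p$ and the semidefiniteness of $a^{ij}$ is required.
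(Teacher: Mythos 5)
Your proposal is correct and is essentially the paper's own argument: the paper also observes that the second term of $A^{\alpha}$ equals $\int_{\bR^d}(D^{\alpha}\mu^{(\varepsilon)})^{p-2}\sum_{k}|D^{\alpha}((\sigma^{ik}D_i)^{\ast}\mu)^{(\varepsilon)}|^2\,dx\geq 0$ (using that $p-2$ is even), so the first term alone inherits the bound of Lemma \ref{lemma dpe1}. No gaps; the remark about absorbing the factor $p$ into the constant is harmless and routine.
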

\begin{proof}
It suffices to note that 
$$
((D^{\alpha}\mu^{(\varepsilon)})^{p-2}D^{\alpha}((\sigma^{ik}D_i)^*\mu)^{(\varepsilon)}, 
 D^{\alpha}((\sigma^{jk}D_j)^*\mu)^{(\varepsilon)})
 $$
 $$
 =\int_{\bR^d} (D^{\alpha}\mu^{(\varepsilon)})^{p-2}(x) \sum_{k=1}^d\big| D^{\alpha}((\sigma^{ik}D_i)^*\mu)^{(\varepsilon)}(x)\big|^2\,dx  \geq 0
$$
\end{proof}

\begin{lemma}                                                                                   \label{lemma pe5}
Let $p\geq 2$ and $m\geq0$ be integers, 
and let $\sigma = (\sigma^{i})$ and $b$ be 
Borel functions on $\bR^d$ with values in $\bR^d$ and $\bR$ respectively. 
Assume the partial derivatives of $\sigma$ 
and $b\sigma$ up to order $m$ are functions such that
there exist constants $K\geq L\geq1$ such that
$$
\sum_{k=0}^{m+1}|D^kb(x)|\leq K,
\quad |\sigma(x)|\leq K_0,
$$
$$
\sum_{k=1}^{m+1}|D^k\sigma(x)| +\sum_{k=1}^{m+1}|D^k(b\sigma)(x)|
\leq L
$$
for all $x,y\in\bR^d$. Then for finite signed Borel measures $\mu$ on $\bR$ with density 
$u:=d\mu/dx\in W^m_p$, satisfying \eqref{second moment mu}, we have 
\begin{equation}                                                                                                 \label{Dpe4_1}
\big((D^{\alpha}\mu^{\ep})^{p-2}D^{\alpha}(b\mu)^\ep,D^{\alpha}(b\mu)^\ep \big)
\leq NK^2|u|_{W^m_p}^p,
\end{equation}
\begin{equation}
 \label{Dpe4_2}
\big((D^{\alpha}\mu^\ep)^{p-2},D^{\alpha}((\sigma^iD_i)^*\mu)^\ep D^{\alpha}(b\mu)^\ep \big)
\leq NKL|u|_{W^m_p}^p 
\end{equation}
for $\varepsilon>0$ and multi-indices $\alpha$ 
such that $|\alpha|\leq m$,  where $N$ is a constant 
depending only on $d$, $p$, $m$.  
\end{lemma}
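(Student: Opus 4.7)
The plan is to follow the iterated-integral scheme used in the proof of Lemma~\ref{lemma dpe1}, noting that the case $|\alpha|=0$ of both estimates is already established in \cite{GG2}, so I focus on $|\alpha|\ge 1$. For \eqref{Dpe4_1} a direct argument suffices: since $\mu$ has density $u$, we have $D^\alpha(b\mu)^{(\varepsilon)} = (D^\alpha(bu))^{(\varepsilon)}$, and applying Leibniz's rule $D^\alpha(bu) = \sum_{\gamma\le\alpha}c^\alpha_\gamma D^\gamma b\cdot D^{\alpha-\gamma}u$ together with $|D^\gamma b|\le K$ for $|\gamma|\le m+1$ gives $|D^\alpha(bu)|_{L_p}\le NK|u|_{W^m_p}$. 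Combining this with H\"older's inequality and $|D^\alpha\mu^{(\varepsilon)}|_{L_p}\le|D^\alpha u|_{L_p}$ yields \eqref{Dpe4_1} with constant $NK^2$.

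For \eqref{Dpe4_2}, writing each factor as a mollifier integral, using \eqref{rho} to carry out the $x$-integration, and symmetrizing over permutations of $y_1,\dots,y_p$, the left-hand side becomes, up to sign,
\[
B = \frac{1}{p(p-1)}\sum_{r\neq s}\int_{\bR^{pd}}\sigma^i(y_r)\,b(y_s)\,\partial_{y^i_r}D^{p\alpha}_y\rho_\varepsilon(y)\,u(y_1)\cdots u(y_p)\,dy.
\]
The key algebraic step is the decomposition $\sigma^i(y_r)b(y_s) = [\sigma^i(y_r)-\sigma^i(y_s)]b(y_s) + (\sigma^i b)(y_s)$, which together with $\sum_r\partial_{y^i_r}\rho_\varepsilon=0$ from \eqref{Drho} splits $B=B_1+B_2$, where $B_1$ carries the Lipschitz difference $\sigma^i(y_r)-\sigma^i(y_s)$ and $B_2$ is proportional to $-\sum_s(\sigma^i b)(y_s)\partial_{y^i_s}D^{p\alpha}_y\rho_\varepsilon$. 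For $B_1$ I would invoke $|\sigma^i(y_r)-\sigma^i(y_s)|\le L|y_r-y_s|$ (from $|D\sigma|\le L$) together with $|b|\le K$; for $B_2$, rewriting $\partial_{y^i_s}\rho_\varepsilon = (p\varepsilon)^{-1}\sum_t(y^i_t-y^i_s)\rho_\varepsilon$ and symmetrizing over $s\leftrightarrow t$ extracts the Lipschitz difference $(\sigma^i b)(y_t)-(\sigma^i b)(y_s)$, bounded by $L|y_t-y_s|$ via $|D(b\sigma)|\le L$. In both pieces the estimate \eqref{qrho} converts the Lipschitz cancellation into a kernel dominated by $NKL\,\rho_{2\varepsilon}$, and then H\"older's inequality combined with $|v^{(2\varepsilon)}|_{L_p}\le|v|_{L_p}$ yields the $NKL|u|^p_{L_p}$ bound at $|\alpha|=0$.

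To extend this to $|\alpha|\ge 1$, I would integrate by parts each operator $D^\alpha_{y_t}$ in $D^{p\alpha}_y\rho_\varepsilon$ onto the integrand---onto $u(y_t)$ for $t\neq r,s$ and via Leibniz onto the products of $\sigma$, $b$, $u$ at $t=r,s$---producing two classes of terms: \emph{Lipschitz-type} terms in which the difference $\sigma^i(y_r)-\sigma^i(y_s)$ (or $(\sigma^i b)(y_t)-(\sigma^i b)(y_s)$ after $s\leftrightarrow t$ symmetrization) is preserved, treated exactly as in the $|\alpha|=0$ analysis, and \emph{derivative-of-$\sigma$} terms in which at least one Leibniz derivative has landed on $\sigma$, producing $|D^\gamma\sigma|\le L$ for $|\gamma|\ge 1$. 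In the latter case at most $|\alpha|-|\gamma|\le m-1$ derivatives remain on the $u(y_r)$-factor, so a further integration by parts of the remaining $\partial_{y^i_r}\rho_\varepsilon$ onto that factor produces a derivative of order $\le m$, which lies in $L_p$ with norm bounded by $|u|_{W^m_p}$.

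The main obstacle will be the combinatorial bookkeeping required to classify every Leibniz term into one of the two scenarios above, completely analogous to cases (i)--(iii) treated in the proof of Lemma~\ref{lemma dpe1}; in particular one must verify term-by-term that a ``bare'' $\sigma^i$ factor (bounded only by $K_0$, not by $L$) always appears paired either with a Lipschitz difference of $\sigma^i$ or with a Lipschitz difference of $(\sigma^i b)$ after the $s\leftrightarrow t$ symmetrization. Once this classification is done, H\"older's inequality together with the mollifier bound $|v^{(2\varepsilon)}|_{L_p}\le|v|_{L_p}$ yields the required $NKL|u|_{W^m_p}^p$ estimate.
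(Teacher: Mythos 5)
Your treatment of \eqref{Dpe4_1} is correct and in fact more direct than the paper's: since $b$ has bounded derivatives up to order $m+1$, Leibniz's rule, H\"older's inequality and the $L_p$-contractivity of mollification give the bound immediately, whereas the paper runs this estimate through the iterated-kernel scheme too. For \eqref{Dpe4_2} your toolkit (kernel representation, the identities \eqref{Drho}, \eqref{partialrho}, \eqref{qrho}, Lipschitz differences of $\sigma$ and of $b\sigma$) is the same as the paper's, organised around the up-front decomposition $\sigma^i(y_r)b(y_s)=[\sigma^i(y_r)-\sigma^i(y_s)]b(y_s)+(\sigma^i b)(y_s)$ instead of the paper's averaging over index triples; that part of the plan, including the $s\leftrightarrow t$ symmetrisation for the $(\sigma^i b)$-part, is sound and makes the role of the two Lipschitz hypotheses transparent.

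There is, however, a genuine gap in your passage to $|\alpha|\geq 1$. You assert that in every ``derivative-of-$\sigma$'' term ``at most $|\alpha|-|\gamma|\leq m-1$ derivatives remain on the $u(y_r)$-factor'', so that the leftover $\partial_{y^i_r}\rho_\varepsilon$ can be integrated by parts onto it. This is true only when the Leibniz derivative that hit $\sigma$ came from the $y_r$-group. If instead $D^\alpha_{y_s}$ hits the $-\sigma^i(y_s)$ half of the difference, you are left with terms of the form $D^\beta\sigma^i(y_s)\,b_\gamma(y_s)\,u_{\alpha-\beta-\gamma}(y_s)\,u_\alpha(y_r)\,\partial_{y^i_r}\rho_\varepsilon(y)\,\Pi_{j\neq r,s}u_\alpha(y_j)$ with $|\beta|\geq 1$: the only $y_r$-dependent factor besides the kernel is $u_\alpha(y_r)$, and integrating $\partial_{y^i_r}$ onto it would require a derivative of $u$ of order $m+1$, which need not exist, while bounding $\partial_{y^i_r}\rho_\varepsilon$ crudely costs a factor of order $\varepsilon^{-1/2}$ and ruins the uniformity in $\varepsilon$. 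These terms need an additional cancellation: for instance, observe that $u_\alpha(y_r)\Pi_{j\neq r,s}u_\alpha(y_j)$ does not depend on which $r\neq s$ was chosen, so summing over $r\neq s$ and using \eqref{Drho} turns $\sum_{r\neq s}\partial_{y^i_r}\rho_\varepsilon$ into $-\partial_{y^i_s}\rho_\varepsilon$, after which the integration by parts can be carried out at $y_s$, where every factor can absorb one more derivative thanks to the bounds on $D^k\sigma$, $D^kb$ (and $D^k(b\sigma)$) up to order $m+1$. This index-averaging is precisely what the paper performs in \eqref{DR1}--\eqref{DR}; without such a step your two-scenario classification does not close, so the proof as proposed has a hole exactly at this class of terms.
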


\begin{proof} First note that by \eqref{wellposedness lefthand side1} 
and \eqref{wellposedness lefthand side2}, as well as the conditions 
on $\sigma$ and $b$, the left-hand sides of \eqref{Dpe4_1} and \eqref{Dpe4_2} are well-defined.
Interchanging the order of integration and the differential operator  
$D^{\alpha}$, rewriting the product of 
integrals as multiple integral, using Fubini's theorem and 
the identity 
$$
D^{\alpha}_x k_{\varepsilon}(x-z)=(-1)^{|\alpha|}D^{\alpha}_zk_{\varepsilon}(x-z), \quad x,y\in\bR^d, 
$$
as well as \eqref{rho}, 
for the left-hand side $F_{\alpha}$ of 
\eqref{Dpe4_1} we compute 
$$
F^{\alpha} = \int_{\bR^d}\int_{\bR^{pd}}b(y_{r})b(y_s)\Pi_{j=1}^p D^{\alpha }_x\ke (x-y_j)\,\mu_p(dy)\,dx
$$
$$
=(-1)^{p|\alpha|}\int_{\bR^d}\int_{\bR^{pd}}b(y_{r})b(y_s)\Pi_{j=1}^p D^{\alpha }_{y_j}\ke (x-y_j)\,\mu_p(dy)\,dx
$$
$$
=(-1)^{p|\alpha|}\int_{\bR^{pd}}b(y_{r})b(y_s) D^{p\alpha }_{y}\int_{\bR^d}\Pi_{j=1}^p\ke (x-y_j)\,dx\,\mu_p(dy)
$$
$$
=(-1)^{p|\alpha|}\int_{\bR^{pd}}b(y_{r})b(y_s) D^{p\alpha }_{y}\rho_{\varepsilon}(y)\Pi_{j=1}^pu(y_j)\,dy 
$$
for any $r,s\in\{1,2,...,p\}$ such that $r\neq s$, where recall that $dy=dy_1...dy_p$ and 
$D^{p\alpha}_y=\Pi_{j=1}^pD^{\alpha}_{y_j}$.  
Hence by integration by parts 
we obtain
$$
F^{\alpha}=\sum_{\beta\leq \alpha}\sum_{\gamma\leq\alpha}
c^{\alpha}_{\beta}c^{\alpha}_{\gamma}\int_{\bR^{pd}}b_{\beta}(y_{r})
b_{\gamma}(y_s) \rho_{\varepsilon}(y)u_{\bar\beta}(y_r)
u_{\bar\gamma}(y_s)\Pi_{j\neq s,r}^pu_{\alpha}(y_j)\,dy,  
$$
where $v_{\delta}:=D^{\delta}v$ and $\bar\delta:=\alpha-\delta$ for functions $v$ on $\bR^d$ 
and multi-indices $\delta\leq \alpha$. 
Using here \eqref{rho} and the boundedness condition on $|b|$ and $|D^{\delta}b|$ we have   
$$
F^{\alpha}\leq NK^2\sum_{\beta\leq\alpha}\sum_{\gamma\leq\alpha}
\int_{\bR^{pd}}\int_{\bR^d}\Pi_{j=1}^p\ke (x-y_j)|u_{\bar\beta}(y_r)|
|u_{\bar\gamma}(y_s)|\Pi_{j\neq s,r}^p|u_{\alpha}(y_j)|
\,dx\,dy 
$$
$$
=NK^2\sum_{\beta\leq\alpha}\int_{\bR^d}
|u_{\bar\beta}|^{(\varepsilon)}
|u_{\bar\gamma}|^{(\varepsilon)}||u_{\alpha}|^{(\varepsilon)}|^{p-2}
\,dx\leq N'K^2|u|^{p}_{W^m_p}
$$
with constants $N$ and $N'$ depending only on $p$, $d$ and $m$, where the last inequality follows by 
Young's inequality and the boundedness of the mollification operator 
in $L_p$.  Now we are going to prove \eqref{Dpe4_2}. By the same way 
as we have rewritten $F^{\alpha}$ we can rewrite the left-hand side $R^{\alpha}$ of the inequality 
\eqref{Dpe4_2}
as 
\begin{equation}                                                                                  \label{DRa}
R^{\alpha}=\int_{\bR^{dp}}
f_{krr}(y)\,dy,
\end{equation}
for any $r, k\in\{1,2,..,p\}$ such that $r\neq k$, where 
$$
f_{krs}(y):=(-1)^{p|\alpha|}b(y_{k})\sigma^i(y_{r})\partial_{y^i_{s}}D^{p\alpha}_y\rho_\varepsilon(y)\,
\Pi_{j=1}^pu(y_j),\quad y=(y_1,...,y_p)\in\bR^{pd}
$$
for $k,r,s\in\{1,2,...,p\}$.  As in the proof of 
Lemma \ref{lemma dpe1}, for real functions $f$ and $g$ we write $f\sim g$ if 
they have the same (finite) Lebesgue integral against $dy=dy_1...dy_p$ 
over $\bR^{pd}$. 
We write $f\preceq g$ if the integrals of $f$ and $g$ against 
$dy$ over $\bR^d$ are finite, and the integral of $f-g$ 
can be estimated by $NKL|u|_{W^m_p}$ for all $u\in W^m_p$ with a constant 
$N=N(d,m,p)$, independent of $u$. By integration by parts 
we have 
$$
f_{krr}\sim \sum_{\gamma\leq \alpha}\sum_{\beta\leq \alpha}
f_{krr}^{\gamma\beta}
$$
with 
$$
f_{krs}^{\gamma\beta}(y):=c^{\alpha}_{\gamma}c^{\alpha}_{\beta}b_{\gamma}(y_{k})\sigma^i_{\beta}(y_{r})\partial_{y_s^i}\rho_\varepsilon(y)\,
u_{\bar\gamma}(y_k)u_{\bar\beta}(y_r)
\Pi_{j\neq k,r}u_\alpha(y_j) 
$$
If $\beta\neq0$ then by integration by parts 
(dropping $\partial_{y^i_r}$ from $\rho_{\varepsilon}$ to the other terms), 
and using the boundedness of $b$, its derivatives up to order $m+1$, and 
the boundedness of the derivatives of $\sigma$ up to order $m+1$, 
we see that $f_{krr}^{\gamma\beta}\preceq0$ for any $k=1,2,..p$, $r\neq k$ and 
$\gamma\leq\alpha$. If $\beta = 0$ and $\gamma = 0$, then $f^{00}_{krr}$ can be estimated by an exact repetition of the proof of Lemma 4.2 in \cite{GG2}, by replacing $\mu$ therein with $u_{\alpha}dy$, to yield $f^{00}_{krr}\preceq 0$. 
Consequently, 
$$
f_{krr}\preceq \sum_{0\neq\gamma\leq \alpha}
f_{krr}^{\gamma0}\quad\text{for every $k=1,...,p$ and $r\in\{1,2,...,p\}\setminus\{k\}$}. 
$$
Writing $f_{krr}^{\gamma 0}(y) = g_{krr}^\gamma(y)h_k^{\bar{\gamma}}(y)$, with
$$
g_{krs}^\gamma(y):=c^{\alpha}_{\gamma}b_{\gamma}(y_{k})\sigma^i(y_{r})\partial_{y_s^i}\rho_\varepsilon(y),\quad 
h_k^{\bar{\gamma}}(y):= u_{\bar\gamma}(y_k)
\Pi_{j\neq k}u_\alpha(y_j), 
$$
we get 
\begin{equation}                                                                                     \label{DR1}
p(p-1)(p-2)R^\alpha \leq \sum_{0\neq \gamma\leq \alpha}
\sum_{s=1}^p\sum_{r\neq s}\sum_{k\neq s,r}
\int_{\bR^{dp}}g_{kss}^{\gamma}(y)h_k^{\bar\gamma}(y)\,dy+NKL|u|^p_{W^m_p},
\end{equation}
and by \eqref{Drho},  
$$
p(p-1)R^{\alpha}\leq-\sum_{0\neq\gamma\leq \alpha}
\sum_{k=1}^p\sum_{r\neq k}\sum_{s\neq r}
\int_{\bR^{pd}}g^{\gamma}_{krs}(y)h_k^{\bar\gamma}(y)\,dy+NKL|u|^p_{W^m_p} 
$$
$$
=-\sum_{0\neq\gamma\leq \alpha}
\sum_{s=1}^p\sum_{r\neq s}\sum_{k\neq s,r}
\int_{\bR^{pd}}g^{\gamma}_{krs}(y)h_k^{\bar\gamma}(y)\,dy 
$$
\begin{equation}
\label{DR2}
-\sum_{0\neq\gamma\leq \alpha}
\sum_{s=1}^p\sum_{r\neq s}\int_{\bR^{pd}}g^{\gamma}_{srs}(y)h_s^{\bar\gamma}(y)\,dy
+NKL|u|^p_{W^m_p}
\end{equation}
with a constant $N=N(d,m,p)$. Summing up \eqref{DR1} and \eqref{DR2} 
we obtain 
$$
c_pR^{\alpha}\leq 
\sum_{0\neq\gamma\leq \alpha}
\sum_{s=1}^p\sum_{r\neq s}\sum_{k\neq r,s}
\int_{\bR^{pd}}(g^{\gamma}_{kss}(y)-g^{\gamma}_{krs}(y))h_k^{\bar{\gamma}}(y)\,dy
+NKL|u|^p_{W^m_p}
$$
\begin{equation}                                                                     \label{DR}                                                                                                                
 -\sum_{0\neq\gamma\leq \alpha}
\sum_{s=1}^p\sum_{r\neq s}\int_{\bR^{pd}}g^{\gamma}_{srs}(y)h_s^{\bar\gamma}(y)\,dy
+NKL|u|^p_{W^m_p}
\end{equation}
where $c_p=p(p-1)^2$, and 
$$
(g^{\gamma}_{kss}(y)-g^{\gamma}_{krs}(y))h_k^{\bar\gamma}(y)=
c^{\alpha}_{\gamma}
b_{\gamma}(y_k)(\sigma^{i}(y_s)-\sigma^{i}(y_r))\partial_{y^i_s}
\rho_{\varepsilon}(y)u_{\bar\gamma}(y_k)\Pi_{j\neq k}u_\alpha(y).  
$$
By  the boundedness of $|b_{\gamma}|$ and the Lipschitz condition on 
$\sigma$, using \eqref{qrho} we get 
$$
(g^{\gamma}_{kss}-g^{\gamma}_{krs})h_k^{\bar{\gamma}}\preceq 0,\quad\text{for all $0\neq \gamma\leq\alpha$, $s=1,2,...,p$ and $r\neq s$, $k\neq r,s$.} 
$$
By integration by parts we have for the last term in \eqref{DR}, 
$$
g^{\gamma}_{srs}h_s^{\bar\gamma}\preceq 0,
\quad
\text{for $0\neq\gamma\leq\alpha$ and $s=1,\dots,p$, $r\neq s$,}
$$
which finishes the proof
of \eqref{Dpe4_2}. 
\end{proof}

For vectors $\xi=\xi(x)\in\bR^d$, depending on $x\in\bR^d$ we consider the linear operators 
$I^{\xi}$ and $J^{\xi}$ defined by 
\begin{equation}
\label{equ operators TIJ}
T^\xi\vp(x) = \vp(x+\xi(x))
\end{equation}
$$
I^{\xi}\varphi(x):=T^\xi\varphi(x)-\varphi(x), 
\quad 
J^{\xi}\psi(x):=I^{\xi}\psi(x)-\xi(x)D_i\psi(x),
$$
$x\in\bR^d$,  acting on functions $\varphi$ and differentiable functions 
$\psi$ on $\bR^d$.

\begin{lemma}                                                                                                  \label{lemma dpe3}
Let $\xi=\xi(x,\frz)$ be an $\bR^d$-valued function of 
$x\in\bR^d$ for every $\frz\in \frZ$ for 
a set $\frZ$. Assume that for an integer $m\geq1$ 
the partial derivatives of $\xi$ in $x\in\bR^d$ 
up to order $m$ are functions on $\bR^d$  for each $\frz\in \frZ$, such that 
for a constant $\lambda>0$, a function $\bar\xi$ on $\frZ$ 
and a constant $K_\xi\geq 0$ we have 
$$
|\xi(x,\frz)|\leq \bar\xi(\frz)\leq K_\xi,\quad
$$
\begin{equation}                                                                                                  \label{dpc2}
\sum_{k=1}^{m+1}|D^k_x\xi(x,\frz)|\leq \bar\xi(\frz), 
\quad |{\rm{det}}(\mathbb I+\theta D_x\xi(x,\frz))|\geq \lambda^{-1}
\end{equation}
for all $x,y\in\bR^d$, $\frz\in \frZ$ and $\theta\in[0,1]$. 
Let $p\geq2$ be an even integer. Then 
for every finite signed Borel  measure $\mu$ 
with density $u=d\mu/dx\in W^m_p$, satisfying \eqref{second moment mu}, 
we have 
$$
C:=\int_{\bR^d}p(D_x^{\alpha}\mu^{(\varepsilon)})^{p-1}
D_x^{\alpha}(J^{\xi*}\mu)^{(\varepsilon)}
\,dx
$$
$$
+\int_{\bR^d}(D_x^{\alpha}\mu^{(\varepsilon)}
+D_x^{\alpha}(I^{\xi*}\mu)^{(\varepsilon)})^p
-(D_x^{\alpha}\mu^{(\varepsilon)})^p
-p(D_x^{\alpha}\mu^{(\varepsilon)})^{p-1}
D_x^{\alpha}(I^{\xi*}\mu)^{(\varepsilon)}\,dx
$$
\begin{equation}                                                                                                \label{dpe3}
\leq N\bar\xi^2(\frz)|u|^p_{W^m_p} \quad \text{for $\frz\in \frZ$,\,\,$\varepsilon>0$} 
\end{equation}
for multi-indices $\alpha$, $0\leq|\alpha|\leq m$ with a constant 
$N=N(d,p,m,\lambda,K_\xi)$. 
\end{lemma}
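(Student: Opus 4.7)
The argument will parallel the proof of Lemma~\ref{lemma dpe1} together with the $\alpha=0$ case from~\cite{GG2}, with Taylor expansions in the jump direction replacing the second-order differential structure. The two identities I plan to use as the backbone are
\begin{equation*}
J^{\xi}_y\varphi(y)=\int_0^1(1-\theta)\,\xi^i(y,\frz)\xi^j(y,\frz)\,(D_{ij}\varphi)(y+\theta\xi(y,\frz))\,d\theta,
\end{equation*}
\begin{equation*}
(a+b)^p-a^p-pa^{p-1}b=p(p-1)\int_0^1(1-s)(a+sb)^{p-2}b^2\,ds;
\end{equation*}
the first exposes the bilinear-in-$\xi$ structure of $J^{\xi*}\mu$, and the second reduces the $p$-th power remainder in $C$ to an expression quadratic in $D^{\alpha}(I^{\xi*}\mu)^{(\varepsilon)}$.

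Next I would rewrite each of $D^{\alpha}\mu^{(\varepsilon)}$, $D^{\alpha}(I^{\xi*}\mu)^{(\varepsilon)}$ and $D^{\alpha}(J^{\xi*}\mu)^{(\varepsilon)}$ as integrals against $\mu$ of (possibly shifted) derivatives of $k_{\varepsilon}$, using the identity $D_x^{\alpha}k_{\varepsilon}(x-y)=(-1)^{|\alpha|}D_y^{\alpha}k_{\varepsilon}(x-y)$. Writing the integer powers $(D^{\alpha}\mu^{(\varepsilon)})^{p-1}$ and $(D^{\alpha}(I^{\xi*}\mu)^{(\varepsilon)})^{2}$ as iterated integrals and invoking the Gaussian semigroup identity~\eqref{rho} converts $C$ into a finite sum of multiple integrals over $\bR^{pd}$ against $u(y_1)\cdots u(y_p)\,dy$ (or against shifted versions of this product, arising from the shifts $y_r\mapsto y_r+\theta\xi(y_r,\frz)$ or $y_r\mapsto y_r+s\xi(y_r,\frz)$ inside the Taylor remainders), in which $D^{p\alpha}_y\rho_{\varepsilon}(y)$ appears together with products $\xi^i(y_r,\frz)\xi^j(y_s,\frz)$.

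Integration by parts in the $y_r$-variables then transfers the $D^{p\alpha}_y$-derivatives from $\rho_{\varepsilon}$ onto the density product, producing via Leibniz a sum indexed by multi-indices $\beta,\gamma\leq\alpha$ of terms of the schematic form
\begin{equation*}
\xi^i_{\beta}(y_r,\frz)\xi^j_{\gamma}(y_s,\frz)\,\partial_{y_r^i}\partial_{y_s^j}\rho_{\varepsilon}(y)\,u_{\bar\beta}(y_r)\,u_{\bar\gamma}(y_s)\prod_{q\neq r,s}u_{\alpha}(y_q),
\end{equation*}
with $\bar\beta=\alpha-\beta$ and $u_{\delta}=D^{\delta}u$, where by~\eqref{dpc2} every derivative of $\xi$ that appears is bounded pointwise by $\bar\xi(\frz)$. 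The symmetry relation~\eqref{Drho} then allows a reorganisation in which the dominant $\xi\xi$-contributions from the $J$-integral combine with those from the $p$-th power Taylor remainder in precisely the way the combination $-(\sigma^i(y_r)-\sigma^i(y_s))(\sigma^j(y_r)-\sigma^j(y_s))$ arose in the proof of Lemma~\ref{lemma dpe1}. The surviving terms are bounded by using $|\xi|\leq\bar\xi(\frz)$ and~\eqref{dpc2}; the Gaussian moment estimate~\eqref{qrho} to absorb factors $\varepsilon^{-1}|y_r-y_s|^2\rho_{\varepsilon}$ against $\rho_{2\varepsilon}$; the diffeomorphism property of $y\mapsto y+\theta\xi(y,\frz)$ supplied by Remark~\ref{remark diffeom} and Lemma~\ref{lemma 1.14.3.22}(i), whose Jacobian lies between $\lambda'$ and $\lambda''$ with $|D^k\tau^{-1}_{\theta\xi}|\leq M'_m$; and the $L_p$-contraction of mollification. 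A final application of Young's inequality would then deliver $C\leq N\bar\xi^2(\frz)|u|_{W^m_p}^p$.

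The main obstacle will be to perform the Leibniz-expansion bookkeeping carefully enough to confirm that, at the leading order, the cross-terms between the $J$-integral and the $p$-th power Taylor-remainder integral combine with the correct sign, so that the surplus is bounded by $\bar\xi^2$ rather than by a weaker factor. This is the $W^m_p$ analogue, and strict generalisation, of the sign analysis already done in Lemma~\ref{lemma dpe1} and for $\alpha=0$ in~\cite{GG2}; it is further complicated here by the presence of the non-polynomial factor $(D^{\alpha}\mu^{(\varepsilon)}+sD^{\alpha}(I^{\xi*}\mu)^{(\varepsilon)})^{p-2}$ sitting inside the Taylor-remainder integral, which must be handled by comparing the shifted and unshifted mollifications through the diffeomorphism $\tau_{s\xi}$ and exploiting $|\det D\tau^{-1}_{s\xi}|\in[\lambda',\lambda'']$.
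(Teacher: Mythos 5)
Your toolbox is the right one, but the specific decomposition you propose breaks the cancellation that the estimate hinges on, and the step you flag as the ``main obstacle'' is a genuine gap rather than bookkeeping. You expand the two parts of $C$ separately: the $J$-term via $J^{\xi}\varphi(y)=\int_0^1(1-\theta)\xi^i\xi^j(D_{ij}\varphi)(y+\theta\xi(y))\,d\theta$, and the $p$-th power remainder via $(a+b)^p-a^p-pa^{p-1}b=p(p-1)\int_0^1(1-s)(a+sb)^{p-2}b^2\,ds$ with $a=D^{\alpha}\mu^{(\varepsilon)}$, $b=D^{\alpha}(I^{\xi*}\mu)^{(\varepsilon)}$. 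After passing to $p$-fold integrals, the first piece produces $\rho_{\varepsilon}$-type kernels in which exactly one coordinate is shifted by $\theta\xi$, while the second, upon expanding the mixture $(a+sb)^{p-2}$ slot by slot, produces kernels in which an arbitrary subset of the remaining coordinates is shifted by the full $\xi$, weighted by powers of $s$. The two families of terms are thus evaluated at different base points and carry different integration parameters, so they cannot be recombined into the difference structure $-(\hat\xi^i(y_r)-\hat\xi^i(y_s))(\hat\xi^j(y_r)-\hat\xi^j(y_s))$ that renders the leading (second-derivative-of-kernel) contributions harmless; and neither piece can be bounded on its own, because two derivatives sit on the kernel and moving them off via \eqref{Drho} and integration by parts costs $m+1$ derivatives of $u$, or leaves a factor $\varepsilon^{-1}$ that \eqref{qrho} only absorbs when the compensating $|y_r-y_s|^2$ is present. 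Your plan names this difficulty but supplies no mechanism to resolve it.

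The paper's proof avoids the mismatch by never splitting $C$ in that way. Since $J^{\xi}-I^{\xi}=-\xi^iD_i$ and $D^{\alpha}\mu^{(\varepsilon)}+D^{\alpha}(I^{\xi*}\mu)^{(\varepsilon)}=D^{\alpha}(T^{\xi*}\mu)^{(\varepsilon)}$, one has the exact identity \eqref{dpC}, so $C$ becomes a single $p$-linear functional of $\mu$. Writing the powers as iterated integrals and using \eqref{rho} gives $C=\int_{\bR^{pd}}L^{\xi}_yD^{p\alpha}_y\rho_{\varepsilon}(y)\,\mu_p(dy)$ with $L^{\xi}_y=\Pi_{r}T^{\xi}_{y_r}-\mathbb{I}-\sum_r\xi^i(y_r)\partial_{y^i_r}$, i.e.\ a genuine second-order Taylor remainder in $\bR^{pd}$ for the simultaneous shift of all $p$ coordinates. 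One Taylor expansion in a single parameter $\vartheta$, followed by the change of variables $y_r\mapsto\tau_{\vartheta}^{-1}(y_r)$ in every coordinate, turns $C$ into exactly the expression \eqref{dpA} of Lemma \ref{lemma dpe1} with $\hat\xi$ in place of $\sigma$ and the transported density $\hat u$ in place of $u$; the cancellation and all subsequent estimates are then inherited wholesale from Lemma \ref{lemma dpe1}, together with $|\hat u|_{W^m_p}\leq N|u|_{W^m_p}$ from the diffeomorphism bounds. If you want to salvage your outline, the fix is precisely this regrouping before any Taylor expansion, not a finer bookkeeping of the $(a+sb)^{p-2}$ factor.
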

\begin{proof} 
Again we note that by \eqref{wellposedness lefthand side1} 
\& \eqref{wellposedness lefthand side2}, 
together with the conditions on $\xi$,
it is easy to verify that $C$ is well-defined.
Notice that 
$$
D^{\alpha}_x\mu^{(\varepsilon)}
+D^{\alpha}_x(I^{\xi*}\mu)^{(\varepsilon)}=D^{\alpha}_x(T^{\xi*}\mu)^{(\varepsilon)}
$$
and 
$$
p(D^{\alpha}_x\mu^{(\varepsilon)})^{p-1}D^{\alpha}_x(J^{\xi*}\mu)^{(\varepsilon)}
-p(D^{\alpha}_x\mu^{(\varepsilon)})^{p-1}D^{\alpha}_x(I^{\xi*}\mu)^{(\varepsilon)}
$$
$$
=-p(D^{\alpha}_x\mu^{(\varepsilon)})^{p-1}D^{\alpha}_x((\xi^iD_i)^*\mu)^{(\varepsilon)},  
$$
Hence                                                                                                      
\begin{equation}                                                                                  \label{dpC}
C=\int_{\bR^d}(D^{\alpha}_x(T^{\xi*}\mu)^{(\varepsilon)})^p
-(D^{\alpha}_x\mu^{(\varepsilon)})^p
-p(D^{\alpha}_x\mu^{(\varepsilon)})^{p-1}D^{\alpha}_x((\xi^iD_i)^*\mu)^{(\varepsilon)}\,dx. 
\end{equation}
First we change the order of $D^{\alpha}_x$ and the integrals and operators 
$T^{\xi}_y$ and $I^{\xi}_y$ acting in the variable $y\in\bR^d$, then we use  
$$
D^{\alpha}_xk_{\varepsilon}(x-y)=(-1)^{|\alpha|}D^{\alpha}_yk_{\varepsilon}(x-y)
$$
to get  
$$
D^{\alpha}_x(T^{\xi*}\mu)^{(\varepsilon)}
=(-1)^{|\alpha|}\int_{\bR^d}T^{\xi}_yD^{\alpha}_yk_{\varepsilon}(x-y)\,\mu(dy), 
$$
$$
D^{\alpha}_x\mu^{(\varepsilon)}
=(-1)^{|\alpha|}\int_{\bR^d}D^{\alpha}_yk_{\varepsilon}(x-y)\,\mu(dy), 
$$
\quad
$$
D^{\alpha}_x((\xi^iD_i)^*\mu)^{(\varepsilon)}
=(-1)^{|\alpha|}\int_{\bR^d}\xi^i(y)\partial_{y^i}D^{\alpha}_yk_{\varepsilon}(x-y)\,\mu(dy). 
$$
Thus rewriting the product of integrals as multiple 
integrals, and using the product measure 
$\mu_p(dy):=\mu(dy_1)...\mu(dy_p)$ on $\bR^{dp}$ by Fubini's theorem we get 
\begin{align}
(D^{\alpha}_x(T^{\xi*}\mu)^{(\varepsilon)})^p(x)
=&\int_{\bR^{pd}}
\Pi_{r=1}^p(T^{\xi}_{y_r}D^{\alpha}_{y_r}k_{\varepsilon}(x-y_r))\,\mu_p(dy)               \nonumber\\ 
=&\int_{\bR^{pd}}
\Pi_{r=1}^pT^{\xi}_{y_r}D^{p\alpha}_{y}\Pi_{r=1}^pk_{\varepsilon}(x-y_r)\,\mu_p(dy), \nonumber\\ 
(D^{\alpha}_x\mu^{(\varepsilon)})^p
=&\int_{\bR^{pd}}\Pi_{r=1}^p
(D^{\alpha}_{y_r}k_{\varepsilon}(x-y_r))\,\mu_p(dy)                                                         \nonumber\\
=&\int_{\bR^{pd}}
D^{p\alpha}_{y}\Pi_{r=1}^pk_{\varepsilon}(x-y_r)\,\mu_p(dy) 
\end{align}
and 
\begin{align}
p(D^{\alpha}_x\mu^{(\varepsilon)})^{p-1}D^{\alpha}_x((\xi^iD_i)^*\mu)^{(\varepsilon)}
=&p\int_{\bR^{pd}}\Pi_{r=1}^{p-1}(D^{\alpha}_{y_r}k_{\varepsilon}(x-y_r))
\xi^i(y_p)\partial_{y^i_p}D^{\alpha}_{y_p}k_{\varepsilon}(x-y_p)
\mu_p(dy)                                                                                                                        \nonumber\\ 
=&p\int_{\bR^{pd}}\xi^i(y_p)\partial_{y^i_p}
D^{p\alpha}_{y}\Pi_{r=1}^pk_{\varepsilon}(x-y_r)\mu_p(dy)                                             \nonumber\\ 
=&\int_{\bR^{pd}}\sum_{r=1}^p\xi^i(y_r)\partial_{y^i_r}
D^{p\alpha}_{y}\Pi_{r=1}^{p}k_{\varepsilon}(x-y_r)\,\mu_p(dy),        
\end{align}
where again
$$
D^{p\alpha}_{y}:=\Pi_{r=1}^{p}D^{\alpha}_{y_r}\quad \text{for $y=(y_1,...,y_p)\in\bR^{dp}$}, 
$$
and the last equation is due to the symmetry of the function 
$\Pi_{r=1}^{p}D^{\alpha}_{y_r}k_{\varepsilon}(x-y_r)$ and the measure $\mu_p(dy)$ in 
$y=(y_1,...,y_p)\in\bR^{pd}$. 
Thus from \eqref{dpC} we get 
$$
C=\int_{\bR^d}\int_{\bR^{pd}}
L^{\xi}_yD^{p\alpha}_y\Pi_{r=1}^pk_{\varepsilon}(x-y_r)\,\mu_p(dy)\,dx
$$
with the operator 
$$
L^{\xi}_y=\Pi_{r=1}^pT^{\xi}_{y_r}-\mathbb I-\sum_{r=1}^p\xi^i(y_r)\partial_{y^i_r},   
$$
defined by 
$$
L^{\xi}_y\varphi(y)=\varphi(y_1+\xi(y_1),...,y_p+\xi(y_p))-\varphi(y)-
\sum_{r=1}^p\xi^i(y_r)\partial_{y^i_r}\varphi(y),  
\quad
y=(y_1,....,y_p)\in\bR^{pd}
$$
for differentiable functions $\varphi$ of $y=(y_1,....,y_p)\in\bR^{pd}$.
Using here Fubini's theorem then changing the order of the operator 
$L^{\xi}_yD^{p\alpha}_y$ and 
the integration against $dx$, by virtue of \eqref{rho} we have  
\begin{equation}
C=\int_{\bR^{pd}}L^{\xi}_yD^{p\alpha}_y\int_{\bR^d}\Pi_{r=1}^pk_{\varepsilon}(x-y_r)
\,dx\,\mu_p(dy)
=\int_{\bR^{pd}}L^{\xi}_yD^{p\alpha}_y\rho_{\varepsilon}(y)\,\mu_p(dy),                       \label{dpC2}
\end{equation}
By Taylor's formula
$$
L^{\xi}_yD^{p\alpha}_y\rho_{\varepsilon}(y)
=\int_0^1(1-\vartheta)
\xi^i(y_k)\xi^j(y_l)(\partial_{y^i_k}\partial_{y^j_l}
D^{p\alpha}\rho_{\varepsilon})(y+\vartheta\bar\xi(y))\,d\vartheta, 
$$
where $y=(y_1,...,y_p)\in\bR^{pd}$, $y_k\in\bR^d$ for $k=1,2,...,p$, 
and $\bar\xi(y):=(\xi(y_1),...,\xi(y_p))$ 
for $y=(y_1,...,y_p)\in\bR^{dp}$. 
Thus by changing the order of integrals and then changing 
the variables $y_k$ with $y_k+\vartheta\xi(y_k)$ for $k=1,2,...,p$, 
from \eqref{dpC2} we obtain 
\begin{equation}                                                                                                       \label{dpC3}
C=\int_0^1(1-\vartheta)C(\vartheta)\,d\vartheta                                                 
\end{equation}
with
$$
C({\vartheta})=\int_{\bR^{pd}}
\sum_{k=1}^p\sum_{l=1}^p
\hat\xi^i(y_k)\hat\xi^j(y_l)
\partial_{y^i_k}\partial_{y^j_l}D^{p\alpha}_y\rho_{\varepsilon}(y)
\Pi_{r=1}^p\hat u(y_r)
\, dy, 
$$
where,  with $\tau_\vartheta(x):=x + \vartheta\xi(x)$,
\begin{equation}
\label{19.2.22.1}
\hat\xi^i(x):=\xi^i(\tau^{-1}_{\vartheta}(x)), 
\quad \hat u(x)=u(\tau^{-1}_\vartheta(x))|\det D\tau_\vartheta^{-1}(x)|, \quad x\in\bR^d,
\quad i=1,2,...,d, 
\end{equation}
and  $dy:=dy_1dy_2...dy_p$ denotes the Lebesgue measure on $\bR^{pd}$. 
Clearly, 
$$
C({\vartheta})=C_1({\vartheta})+C_2({\vartheta})
$$
with
\begin{align}
C_1({\vartheta})=&\int_{\bR^{pd}}                                                                                   
\sum_{k=1}^p
\hat\xi^i(y_k)\hat\xi^j(y_k)
\partial_{y^i_k}\partial_{y^j_k}D^{p\alpha}_y\rho_{\varepsilon}(y)
\Pi_{r=1}^p\hat u(y_r)\,dy,                                                                        \nonumber\\
C_2({\vartheta})=&\int_{\bR^{pd}}                                                                                
\sum_{k=1}^p\sum_{l\neq k}
\hat\xi^i(y_k)\hat\xi^j(y_l)
\partial_{y^i_k}\partial_{y^j_l}D^{p\alpha}_y\rho_{\varepsilon}(y)
\Pi_{r=1}^p\hat u(y_r)\,dy.                                                                              \nonumber
\end{align}
Using \eqref{Drho} and the symmetry in $y_k$ and $y_l$, we have 
\begin{align}                                                                                                    \nonumber\\
C_1(\vartheta)=&-\tfrac{1}{2}
\int_{\bR^{pd}}                                                                                   
\sum_{k=1}^p\sum_{l\neq k}
(\hat\xi^i(y_k)\hat\xi^j(y_k)+\hat\xi^i(y_l)\hat\xi^j(y_l))
\partial_{y^i_k}\partial_{y^j_l}D^{p\alpha}_y\rho_{\varepsilon}(y)
\Pi_{r=1}^p\hat u(y_r)\,dy,                                                                             \nonumber\\
C_2({\vartheta})=&\tfrac{1}{2}\int_{\bR^{pd}}                                                                                
\sum_{k=1}^p\sum_{l\neq k}
(\hat\xi^i(y_k)\hat\xi^j(y_l)+\hat\xi^i(y_l)\hat\xi^j(y_k))
\partial_{y^i_k}\partial_{y^j_l}D^{p\alpha}_y\rho_{\varepsilon}(y)
\Pi_{r=1}^p\hat u(y_r)\,dy.                                                                              \nonumber\\                       
\end{align}
Hence 
\begin{equation}                                                                                          \label{dpCtheta}
C(\vartheta)=
\int_{\bR^{pd}}                                                                                   
\sum_{r=1}^p\sum_{s\neq r}
\hat a^{ij}(y_r,y_s)
\partial_{y^i_r}\partial_{y^j_s}D^{p\alpha}_y\rho_{\varepsilon}(y)
\Pi_{r=1}^p\hat u(y_r)\,dy                                               
\end{equation}
with 
$$
\hat a^{ij}(y_r,y_s)=-\tfrac{1}{2}(\hat \xi^i(y_r)-\hat \xi^i(y_s))(\hat \xi^j(y_r)-\hat \xi(y^j_s))
$$
Notice that the right-hand side of equation \eqref{dpCtheta} is the same as 
the right-hand side of \eqref{dpA} with $\hat\xi^{i}$ in place of $\sigma^{i\cdot}$ 
for each $i=1,2,...,d$ and with $\hat u$ in place of $u$. It is easy to verify, 
see Lemma 3.3 in \cite{DGW}, that for a constant $N=N(d,\lambda,m,K_\xi)$ we have 
$$
\sum_{k=1}^{m+1} |D_x^k(\tau_\vartheta^{-1}(x))|\leq N,
\quad
\text{for each $\vartheta\in[0,1], \frz\in\frZ, x\in\bR^d$.}
$$
Thus also  for each $\vartheta\in[0,1]$,
\begin{equation}
\label{19.2.22.2}
\sum_{k=1}^{m+1}|D^k\hat\xi(x,\frz)|\leq N\bar\xi(\frz)\quad 
\text{for $x\in\bR^d$, $\frz\in\frZ$},
\end{equation}
with a constant $N=N(d,m,\lambda,K_\xi)$, 
i.e., for each $\vartheta\in[0,1]$  
and $\frz\in \frZ$ the function $\hat\xi$ of $x\in\bR^d$ satisfies 
the condition \eqref{dpc1} on $\sigma$ in Lemma \ref{lemma dpe1}, 
with $N\bar\xi(\frz)$ in place of $L$. 
Consequently, copying the calculations 
which lead from equation \eqref{dpA} to the estimate \eqref{dpe1} 
in the proof of Lemma \ref{lemma dpe1}, we obtain  
$$
C(\vartheta)\leq N \bar\xi^2(\frz)|\hat u|^p_{W^m_p}
\quad
\text{for each $\vartheta\in[0,1]$, $\frz\in \frZ$}
$$
with a constant $N=N(d,m,p,\lambda,K_\xi)$. Note that 
due to the condition \eqref{dpc2} there is a constant 
$N=N(d,p,m,\lambda,K_\xi)$ such that 
\begin{equation}
\label{19.2.22.3}
|\hat u|_{W^m_p}\leq N|u|_{W^m_p} \quad \text{for all $\vartheta\in[0,1]$}. 
\end{equation}
Hence by virtue of \eqref{dpC3} the estimate \eqref{dpe3} follows.
\end{proof}

\begin{corollary}
\label{corollary dJ}
Let the conditions of Lemma \ref{lemma dpe3} hold.
Then 
for every finite signed Borel  measure $\mu$ 
with density $u=d\mu/dx\in W^m_p$, satisfying \eqref{second moment mu}, 
we have 
\begin{equation}
\label{dpe3-2}
\int_{\bR^d}(D_x^{\alpha}\mu^{(\varepsilon)})^{p-1}
D_x^{\alpha}(J^{\xi*}\mu)^{(\varepsilon)}
\,dx
\leq N \bar\xi^2(\frz)|u|^p_{W^m_p} 
\quad 
\text{for $\frz\in \frZ$,\,\,$\varepsilon>0$} 
\end{equation}
for multi-indices $\alpha$, $0\leq|\alpha|\leq m$ with a constant 
$N=N(d,p,m,\lambda,K_\xi)$. 
\end{corollary}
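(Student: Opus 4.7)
The plan is to deduce the corollary directly from Lemma \ref{lemma dpe3} by showing that the second term in the definition of $C$ there is nonnegative, so that an upper bound on $C$ yields an upper bound on the first term alone.

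More precisely, recall that Lemma \ref{lemma dpe3} asserts
$$
C = p\int_{\bR^d} (D_x^{\alpha}\mu^{(\varepsilon)})^{p-1} D_x^{\alpha}(J^{\xi*}\mu)^{(\varepsilon)}\,dx + R \leq N \bar\xi^2(\frz)|u|^p_{W^m_p},
$$
where
$$
R := \int_{\bR^d}\Big\{\bigl(D_x^{\alpha}\mu^{(\varepsilon)}+D_x^{\alpha}(I^{\xi*}\mu)^{(\varepsilon)}\bigr)^p - (D_x^{\alpha}\mu^{(\varepsilon)})^p - p(D_x^{\alpha}\mu^{(\varepsilon)})^{p-1} D_x^{\alpha}(I^{\xi*}\mu)^{(\varepsilon)}\Big\}\,dx.
$$
The key observation is that the integrand of $R$ is pointwise nonnegative. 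Indeed, since $p\geq 2$ is even, the function $f(t)=t^p$ is convex on $\bR$, and therefore the first-order Taylor remainder
$$
f(a+b)-f(a)-p\,a^{p-1}b \geq 0 \qquad \text{for all } a,b\in\bR.
$$
Applying this with $a = D_x^{\alpha}\mu^{(\varepsilon)}(x)$ and $b = D_x^{\alpha}(I^{\xi*}\mu)^{(\varepsilon)}(x)$ at each $x\in\bR^d$ shows that the integrand of $R$ is nonnegative, so $R\geq 0$.

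Hence
$$
p\int_{\bR^d} (D_x^{\alpha}\mu^{(\varepsilon)})^{p-1} D_x^{\alpha}(J^{\xi*}\mu)^{(\varepsilon)}\,dx \leq C \leq N \bar\xi^2(\frz)|u|^p_{W^m_p},
$$
and dividing by $p$ (absorbing the factor into the constant $N$) yields \eqref{dpe3-2}. There is no substantial obstacle here; the only point to verify is the convexity remark, which requires $p$ to be even (as assumed in Lemma \ref{lemma dpe3}), and the well-definedness of all integrals, which was already established in the proof of Lemma \ref{lemma dpe3} via \eqref{wellposedness lefthand side1} and \eqref{wellposedness lefthand side2}.
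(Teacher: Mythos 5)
Your proposal is correct and is essentially the paper's own argument: the paper likewise notes that by convexity of $a\mapsto a^p$ for even $p\geq 2$ the Taylor-remainder term in $C$ is pointwise nonnegative, so \eqref{dpe3} directly implies \eqref{dpe3-2}. Your additional remark about dividing by $p$ and absorbing it into $N$ is a harmless (and correct) bookkeeping detail.
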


\begin{proof}
By the convexity of the function $f(a) = a^p$ 
for even $p\geq 2$ we know that $(a+b)^p - a^p - pa^{p-1}b\geq 0$ 
for all $a,b\in \bR^d$. Applying this with $a = D^\alpha u^\ep$ 
and $b = D^\alpha (I^\xi u)^\ep$ shows that \eqref{dpe3} implies  \eqref{dpe3-2}.
\end{proof}

\begin{lemma}
\label{lemma dpe4}
Let the conditions of Lemma \ref{lemma dpe3} hold. 
Then for every finite signed Borel measure $\mu$ 
with density $u = d\mu/dx\in W_p^m$, 
satisfying \eqref{second moment mu}, we have
\begin{equation}
\label{dpe4}
\Big|\int_{\bR^d}
(D^\alpha u^{(\varepsilon)}
+D^\alpha(I^{\xi*}\mu)^{(\varepsilon)})^p-(D^\alpha u^{(\varepsilon)})^p
\,dx\Big|
\leq N \bar{\xi}(\frz)|u|^p_{W^m_p},
\end{equation}
for a constant $N=N(d,p,m,\lambda,K_\xi)$ for $\frz\in\frZ$, 
where the argument $x\in\bR^d$ 
is suppressed in the integrand.
\end{lemma}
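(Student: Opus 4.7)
The plan is to adapt the Gaussian--Fubini argument from the proof of Lemma \ref{lemma dpe3}, but to retain only the \emph{first}-order term in the Taylor expansion of $T^\xi-\mathrm{Id}$, and then to recover the bound linear in $\bar\xi$ by a symmetrization. Writing $D^\alpha u^{(\varepsilon)}+D^\alpha(I^{\xi*}\mu)^{(\varepsilon)}=D^\alpha(T^{\xi*}\mu)^{(\varepsilon)}$ and proceeding exactly as at the start of the proof of Lemma \ref{lemma dpe3} via Fubini and the semigroup identity \eqref{rho}, the left-hand side of \eqref{dpe4} equals
\[
A=\int_{\bR^{pd}}\bigl[(D^{p\alpha}_y\rho_\varepsilon)(\tau(y))-(D^{p\alpha}_y\rho_\varepsilon)(y)\bigr]\mu_p(dy),
\]
where $\tau(y)=(y_r+\xi(y_r))_{r=1}^p$. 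A first-order Taylor expansion followed by the change of variables $y_r\mapsto\tau_\vartheta(y_r)=y_r+\vartheta\xi(y_r)$ used in the proof of Lemma \ref{lemma dpe3} yields
\[
A=\int_0^1\!\int_{\bR^{pd}}\sum_{r=1}^p\hat\xi^i(y_r)\,\partial_{y^i_r}D^{p\alpha}_y\rho_\varepsilon(y)\,\prod_l\hat u(y_l)\,dy\,d\vartheta,
\]
with $\hat\xi$, $\hat u$ as in \eqref{19.2.22.1} and satisfying \eqref{19.2.22.2}--\eqref{19.2.22.3}.

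The key step is the symmetrization. Since $D^{p\alpha}_y$ commutes with $\partial_{y^i_r}$, the identity \eqref{Drho} implies $\sum_r\partial_{y^i_r}D^{p\alpha}_y\rho_\varepsilon=0$ and hence
\[
\sum_r\hat\xi^i(y_r)\partial_{y^i_r}D^{p\alpha}_y\rho_\varepsilon=\tfrac1p\sum_{r\neq s}\bigl(\hat\xi^i(y_r)-\hat\xi^i(y_s)\bigr)\partial_{y^i_r}D^{p\alpha}_y\rho_\varepsilon.
\]
I would then integrate the full derivative $D^{p\alpha}_y\partial_{y^i_r}$ by parts off $\rho_\varepsilon$ onto the factors $(\hat\xi^i(y_r)-\hat\xi^i(y_s))\prod_l\hat u(y_l)$ and expand via the Leibniz rule. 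In every term of the resulting expansion except one, at most $|\alpha|\le m$ derivatives fall on each $\hat u(y_l)$ and at least one factor of the form $\hat\xi^i(y_r)-\hat\xi^i(y_s)$, $D^k\hat\xi$ with $k\geq 1$, or $\partial_i\hat\xi^i$ is produced; all of these are bounded by $N\bar\xi$ via \eqref{19.2.22.2}. Such ``good'' terms are estimated by H\"older's inequality combined with the convolution identity $\int_{\bR^{pd}}\rho_\varepsilon\prod_l f_l(y_l)\,dy=\int_{\bR^d}\prod_l f_l^{(\varepsilon)}(x)\,dx$, the contractivity $|f^{(\varepsilon)}|_{L_p}\le|f|_{L_p}$, and \eqref{19.2.22.3}, yielding a contribution bounded by $N\bar\xi|u|^p_{W^m_p}$.

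The main obstacle is the single ``bad'' contribution coming from the Leibniz $\beta=0$ term at $l=r$, which yields the factor $(\hat\xi^i(y_r)-\hat\xi^i(y_s))\,D^{\alpha+e_i}\hat u(y_r)$ involving $m+1$ derivatives of $\hat u$, which is not controlled by $|u|_{W^m_p}$ alone. To dispose of it I would integrate the extra derivative $\partial_{y^i_r}$ in $D^{\alpha+e_i}=\partial_{y^i_r}D^\alpha$ by parts once more \emph{back} onto the $y_r$-dependent factors $\rho_\varepsilon$ and $\hat\xi^i(y_r)$ (noting that all the remaining factors depend only on $y_l$ for $l\neq r$). This produces on the one hand a term carrying $\partial_i\hat\xi^i(y_r)\cdot\rho_\varepsilon$ (bounded by $N\bar\xi\,\rho_\varepsilon$) and on the other hand the quantity $(\hat\xi^i(y_r)-\hat\xi^i(y_s))\partial_{y^i_r}\rho_\varepsilon$. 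Combining the Lipschitz estimate $|\hat\xi^i(y_r)-\hat\xi^i(y_s)|\le N\bar\xi|y_r-y_s|$ from \eqref{19.2.22.2} with the Gaussian identity \eqref{partialrho} and the moment bound \eqref{qrho} yields $|(\hat\xi^i(y_r)-\hat\xi^i(y_s))\partial_{y^i_r}\rho_\varepsilon|\le N\bar\xi\,\rho_{2\varepsilon}$; this cancellation of the Lipschitz difference against the singular kernel derivative is precisely what earns the linear power of $\bar\xi$ in the final estimate. After this step all remaining factors carry at most $m$ derivatives of each $\hat u(y_l)$, so the same H\"older/convolution argument closes the estimate and gives $|A|\le N\bar\xi(\frz)|u|^p_{W^m_p}$.
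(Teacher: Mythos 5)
Your opening reductions are exactly the paper's: the Fubini/semigroup identity, the first--order Taylor expansion, the change of variables producing $\hat\xi,\hat u$ with \eqref{19.2.22.2}--\eqref{19.2.22.3}, and the symmetrization identity are all correct. The gap comes in the key combinatorial claim after you integrate the \emph{full} derivative $D^{p\alpha}_y\partial_{y^i_r}$ off $\rho_\varepsilon$ onto $(\hat\xi^i(y_r)-\hat\xi^i(y_s))\Pi_l\hat u(y_l)$: it is not true that only one Leibniz term carries $m+1$ derivatives of $\hat u$. The difference factor also depends on $y_s$, so the derivatives $D^\alpha_{y_s}$ may fall on it (producing $-\hat\xi^i_\beta(y_s)$ with $0\neq\beta\le\alpha$) while the entire package $D^\alpha_{y_r}\partial_{y^i_r}$ still lands on $\hat u(y_r)$. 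For $|\alpha|\ge1$ this yields terms of the form $\rho_\varepsilon(y)\,\hat\xi^i_\beta(y_s)\,\hat u_{\alpha-\beta}(y_s)\,\partial_{y^i_r}\hat u_\alpha(y_r)\,\Pi_{l\neq r,s}\hat u_\alpha(y_l)$, which are just as over--differentiated as the one you single out, but for which your remedy fails: the Lipschitz difference has been destroyed, so after integrating $\partial_{y^i_r}$ back onto the $y_r$--dependent factors you are left with an \emph{unpaired} $\partial_{y^i_r}\rho_\varepsilon$. By \eqref{partialrho} a single factor $|y_r-y_s|/\varepsilon$ against $\rho_\varepsilon$ is only of order $\varepsilon^{-1/2}\rho_{2\varepsilon}$ (\eqref{qrho} controls quadratic powers, not linear ones), so this route does not give the $\varepsilon$--uniform constant the lemma needs.

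The paper's proof avoids this precisely by its order of operations: it moves only $D^{p\alpha}_y$ off $\rho_\varepsilon$, keeping the extra first--order derivative on the kernel, so that in each resulting term the factor $\hat\xi^i_\beta$ and the leftover derivative sit at the \emph{same} point $y_k$; when $\beta\neq0$ the derivative $\partial_{y^i_k}$ is integrated by parts onto $\hat\xi^i_\beta(y_k)\hat u_{\bar\beta}(y_k)$ (harmless, since $|\alpha-\beta|\le m-1$), and only the $\beta=0$ part is symmetrized and paired with $\partial_{y^i_k}\rho_\varepsilon$ via the Lipschitz bound and \eqref{qrho}. Your scheme could be repaired --- for instance, for fixed $s$ and $\beta\neq0$ sum the offending terms over $r\neq s$, use $\sum_{r\neq s}\partial_{y^i_r}\Pi_{l\neq s}\hat u_\alpha(y_l)$ and \eqref{Drho} to convert the unpaired derivative into $\partial_{y^i_s}\rho_\varepsilon$, and then integrate by parts at the slot $y_s$, where $\hat u_{\alpha-\beta}$ can absorb one more derivative --- but this extra symmetrization step is missing, and as written the case analysis is incomplete and the estimate does not close.
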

\begin{proof}
Define
$$
F:= \int_{\bR^d}
(D^\alpha u^{(\varepsilon)}
+D^\alpha(I^{\xi*}u)^{(\varepsilon)})^p-(D^\alpha u^{(\varepsilon)})^p
\,dx
$$
$$
= \int_{\bR^d}
(D^\alpha(T^{\xi*}u)^{(\varepsilon)})^p-(D^\alpha u^{(\varepsilon)})^p\,dx,
$$
where we use the operator $T$ defined in \eqref{equ operators TIJ}. 
As in the proof of Lemma 4.5 in \cite{GG2} we define the operator
$$
M_y^\xi = \Pi_{i=1}^p T_{y_i}^\xi - \bI
$$
where $\bI$ is the identity operator. 
Observe that using Fubini's theorem 
and the notation $D^{p\alpha}_y = \Pi_{r=1}^p D^\alpha_{y_r}$, 
$dy = dy_1\cdots dy_p$, $y= (y_1,\dots,y_p)\in \bR^{pd}$,
$$
F = \int_{\bR^d}\int_{\bR^{dp}}\Big(D_x^{p\alpha}\Pi_{i=1}^p T_{y_i}^\xi \Pi_{j=1}^p \ke(x-y_j) \Pi_{k=1}^p u(y_k)
- D_x^{p\alpha}\Pi_{j=1}^p \ke (x-y_j)\Pi_{k=1}^p u(y_k)\Big)\,dy\,dx
$$
$$
= \int_{\bR^d}\int_{\bR^{dp}}
\Big(M_y^\xi D_y^{p\alpha} \Pi_{j=1}^p \ke(x-y_j)\Big) \Pi_{k=1}^p u(y_k)\,dy\,dx
= \int_{\bR^{dp}}
\Big(M_y^\xi D_y^{p\alpha} \rho_\varepsilon(y)\Big) \Pi_{k=1}^p u(y_k)\,dy.
$$
Next, note that by Taylor's formula with $\bar{\xi}(y) = (\xi(y_1),\dots,\xi(y_p))\in \bR^{dp}$,  
$$
M_y^\xi D_y^{p\alpha} \rho_\varepsilon(y) 
= \sum_{k=1}^p\int_0^1(\partial_{y_k^i}D^{p\alpha}_y\rho_\varepsilon)(y 
+ \vartheta\bar\xi (y))\,d\vartheta\,\xi^i(y_k).
$$
Thus, by a change of variables, Fubini's theorem and the functions defined in \eqref{19.2.22.1},
$$
F = \sum_{k=1}^p\int_0^1\int_{\bR^{dp}}
\partial_{y_k^i}D_y^{p\alpha}\rho_\varepsilon(y)\hat{\xi}^i(y_k)\Pi_{j=1}^p\hat{u}(y_j)
\,dy\,d\vartheta,
$$
which by integration by parts gives, with multi-indices $\beta\leq\alpha$, 
$\bar{\beta}:=\alpha - \beta$ and constants $c_\beta^\alpha$,
$$
F = \sum_{\beta\leq\alpha}c_\beta^\alpha\sum_{k=1}^p
\int_0^1\int_{\bR^{dp}}
\partial_{y_k^i}\rho_\varepsilon(y)\hat{\xi}_{\beta}^i(y_k)\hat{u}_{\bar\beta}(y_k)
\Pi_{j\neq k}^p\hat{u}_{\alpha}(y_j)
\,dy\,d\vartheta
$$
$$
=\sum_{\beta\leq\alpha}c^\alpha_\beta\sum_{k=1}^p \int_0^1 f_k^\beta(\vartheta)\,d\vartheta,
$$
where  for $k=1,\dots,p$, $\vartheta\in [0,1]$ and $\beta\leq\alpha$,
$$
f_k^\beta(\vartheta) 
:= \int_{\bR^{dp}}\partial_{y_k^i}\rho_\varepsilon(y)\hat{\xi}_{\beta}^i(y_k)
\hat{u}_{\bar\beta}(y_k)\Pi_{j\neq k}^p\hat{u}_{\alpha}(y_j)\,dy
$$
and where $\hat{u}_{\gamma}(y_k)=D_{y_k}^\gamma \hat{u}(y_k)$ 
for $\gamma = \alpha,\bar\beta$. We consider two cases. In the first case, 
let $\bar{\beta}<\alpha$ and hence $|\beta|\geq 1$. Then by 
integration by parts, 
for all $k=1,\dots,p$ and a constant $N = N(d,p,m,\lambda)$, 
$$
f_k^\beta(\vartheta) = -\int_{\bR^{dp}}
\rho_\varepsilon(y)
\big(
(\partial_{y_k^i}\hat{\xi}_{\beta}^i(y_k))\hat{u}_{\bar\beta}(y_k)
+\hat{\xi}_{\beta}^i(y_k)(\partial_{y_k^i}\hat{u}_{\bar\beta}(y_k))
\big)
\Pi_{j\neq k}^p\hat{u}_{\alpha}(y_j)\,dy\,d\vartheta
\leq N\bar{\xi}(\frz)|u|_{W_p^m}^p,
$$
where we used \eqref{19.2.22.2} and \eqref{19.2.22.3}. 
In the second case $\bar{\beta} = \alpha$ so that $\beta = 0$ 
and we have 
$$
\sum_{k=1}^p f_k^0 = \sum_{k=1}^p 
\int_{\bR^{pd}}
\partial_{y_k^i}\rho_\varepsilon(y)\hat{\xi}^i(y_k)\Pi_{j=1}^p\hat{u}_{\alpha}(y_j)
\,dy,
$$
as well as by using \eqref{Drho} and the symmetry in $s$ and $k$,
$$
\sum_{k=1}^p f_k^0 = -\sum_{k=1}^p \sum_{s\neq k}
\int_{\bR^{pd}}
\partial_{y_k^i}\rho_\varepsilon(y)\hat{\xi}^i(y_s)\Pi_{j=1}^p\hat{u}_{\alpha}(y_j)
\,dy.
$$
Therefore also, with a constant $N = N(d,p,m,\lambda,K_\xi)$,
$$
\big|(p-1)\sum_{k=1}^p f_k^0 + p\sum_{k=1}^p f_k^0\big|
$$
$$
 = \Big|\sum_{k=1}^p \sum_{s\neq k}\int_{\bR^{pd}}
 \partial_{y_k^i}\rho_\varepsilon(y)
 \big(\hat{\xi}^i(y_k)-\hat{\xi}^i(y_s)\big)
 \Pi_{j=1}^p\hat{u}_{\alpha}(y_j)\,dy\Big|
 \leq N\bar{\eta}(\frz)|u|_{W_p^m}^p,
$$
where we used \eqref{19.2.22.2} together with \eqref{qrho}, 
as well as \eqref{19.2.22.3}. This proves the lemma.
\end{proof}


\mysection{Solvability of the filtering equations in Sobolev spaces}
\label{sec solvability}

The following two lemmas are essentially Lemma 5.2  in \cite{GG2}, 
where instead of $D^\alpha\ke$ the kernel $\ke$ is considered. 
However, keeping this difference in mind, 
the arguments in the proofs of Lemma 5.2  in \cite{GG2} can  easily be adapted. 
Hence we only provide an outline and refer the reader 
to the preceding article \cite{GG2} for full details.

\begin{lemma}                                                                                                                \label{lemma ediff}
Let the Assumption \ref{assumption SDE}  hold. 
Let $u$ be an $L_p$-solution of \eqref{equdZ}, $p\geq 2$, 
such that \eqref{equZ} holds for each $\vp\in C_0^\infty(\bR^d)$ 
almost surely for all $t\in [0,T]$ and assume moreover that 
$\esssup_{t\in [0,T]}|u_t|_{L_1}<\infty$. If $K_1\neq 0$ 
in Assumption \ref{assumption SDE} (ii), then assume additionally
\begin{equation}
\label{esssup condition}
 \esssup_{t\in [0,T]}\int_{\bR^d}|y|^2|u_t(y)|\,dy<\infty,\quad \text{almost surely.}
\end{equation} 
Then for each $\varepsilon>0$ and integer $m\geq 0$, 
for any multi-index $\alpha=(\alpha_1,\dots,\alpha_d)$, 
$|\alpha|\leq m$, for all $x \in\bR^d$ 
almost surely
\begin{equation}                                                                                                                  \label{ediff}
\begin{split}
    D^\alpha u_t^\ep(x)  &= D^\alpha u^\ep_0(x)
    + \int_0^t D^\alpha(\tilde{\mathcal{L}}_s^* u_s)^\ep(x)\,ds
    + \int_0^t D^\alpha(\mathcal{M}^{*k}_s u_s)^\ep(x)\,dV^k_s \\
    &+ \int_0^t\int_{\frZ_0}D^\alpha(J^{\eta *}_s u_s)^\ep(x)\,\nu_0(d\frz)ds
    + \int_0^t\int_{\frZ_1}D^\alpha(J^{\xi *}_s u_s)^\ep(x)\,\nu_1(d\frz)ds\\
    &+ \int_0^t\int_{\frZ_1}D^\alpha(I^{\xi *}_s u_{s-})^\ep(x)\,\Nte(ds,d\frz),
\end{split}
\end{equation}
for all $t\in[0,T]$.
\end{lemma}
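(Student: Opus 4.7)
The plan is to test the weak form \eqref{equZ}, which the $L_p$-solution $u$ is assumed to satisfy for $\varphi\in C_0^\infty$, against the Schwartz-class function $\varphi(y)=D^\alpha_x k_\varepsilon(x-y)$ (with $x\in\bR^d$ and $\varepsilon>0$ fixed) and recognize each of the six resulting terms as the corresponding term in \eqref{ediff}. The identification itself is a direct application of the definition of $(L^\ast\mu)^{(\varepsilon)}$ from Section \ref{sec estimates}: the pairing $(u_s,\varphi)$ equals $D^\alpha u_s^{(\varepsilon)}(x)$; and for each operator $L_s\in\{\tilde{\mathcal{L}}_s,\mathcal{M}^k_s,J^{\eta}_s,J^{\xi}_s,I^{\xi}_s\}$ one writes out $L_s$ acting in the $y$-variable on $D^\alpha_x k_\varepsilon(x-y)$, pulls the (constant-in-$y$) operator $D^\alpha_x$ outside the $dy$-integral, and reads the result as $D^\alpha (L^\ast_s u_s)^{(\varepsilon)}(x)$.

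The technical issue is that $D^\alpha_x k_\varepsilon(x-\cdot)$ is not compactly supported, so one first has to extend the validity of \eqref{equZ} from $C_0^\infty$ to Schwartz test functions. I would do this by standard cutoff: choose $\chi_n\in C_0^\infty(\bR^d)$ with $\chi_n=1$ on $B_n$, $\operatorname{supp}\chi_n\subset B_{n+1}$ and $\sup_n\max_{k\leq 2}|D^k\chi_n|<\infty$, and apply \eqref{equZ} to $\varphi_n=\chi_n D^\alpha_x k_\varepsilon(x-\cdot)\in C_0^\infty$. For the Lebesgue $ds$-terms the passage $n\to\infty$ is by dominated convergence, using the rapid decay of $D^\beta k_\varepsilon$ together with $\esssup_{t\in[0,T]}|u_t|_{L_1}<\infty$; when $K_1\neq0$ the extra weight bound \eqref{esssup condition} absorbs the linear growth of the coefficients appearing in Assumption \ref{assumption SDE}(ii). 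For the stochastic integral against $V$, and for the compensated Poisson integral against $\tilde N_1$, convergence is obtained from the corresponding It\^o / Kunita--Watanabe isometries applied to the difference of integrands, the resulting $L_2$-majorants being controlled in the same way; the finiteness of $|\bar\eta|_{L_2}$ and $|\bar\xi|_{L_2}$ from Assumption \ref{assumption p} takes care of the $\nu_i$-compensator terms.

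The only new feature compared to Lemma 5.2 in \cite{GG2} is the presence of the outer $D^\alpha_x$, but this commutes with all $y$-integrals, $ds$-integrals and stochastic integrals once the Fubini / stochastic Fubini statements are justified by the same dominated-convergence arguments, and the regularity of $k_\varepsilon$ means that the needed differentiability under the integral sign is automatic. Consequently one can essentially copy the proof of \cite[Lemma 5.2]{GG2}, replacing $k_\varepsilon(x-\cdot)$ by $D^\alpha_x k_\varepsilon(x-\cdot)$ throughout and noting that every growth bound used there is preserved since $|y|^r |D^\beta k_\varepsilon(x-y)|$ remains bounded in $y$ for all $r\geq 0$ and multi-indices $\beta$.

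The main obstacle is the uniform-in-$n$ control of the stochastic integrands in the cutoff approximation when $K_1\neq 0$, because the coefficients and jump sizes then grow linearly in $y$ and are tested against derivatives of $k_\varepsilon$. The decisive point is that, thanks to \eqref{esssup condition} and the Gaussian decay of $D^\beta k_\varepsilon$, one obtains an $L_2([0,T]\times\Omega)$-majorant that is independent of $n$; after this the isometries give convergence of the stochastic integrals and the identity \eqref{ediff} follows for each $t\in[0,T]$ almost surely.
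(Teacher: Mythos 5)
Your proposal is correct and matches the paper's own argument: the paper likewise tests \eqref{equZ} against $\psi_n(\cdot)\,D^\alpha_x k_\varepsilon(x-\cdot)$ with a smooth cutoff $\psi_n$, bounds the integrands using the Gaussian decay of the derivatives of $k_\varepsilon$ together with $\esssup_t|u_t|_{L_1}<\infty$ and \eqref{esssup condition} when $K_1\neq 0$, passes to the limit by dominated convergence (deterministic terms) and convergence in probability uniformly in time (stochastic terms), and finally pulls $D^\alpha_x$ outside the integrals since the operators act in the $y$-variable. The only differences are cosmetic (choice of cutoff sequence), so no further changes are needed.
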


\begin{proof}
The case of $\alpha = 0$ is Lemma 5.4 in \cite{GG2}. 
The case of $\alpha\neq 0$ such that $0<|\alpha|\leq m$ 
works exactly in the same way. We first define for 
a $\psi\in C_0^\infty(\bR)$ such that $\psi(0)=1$, $\psi(r)=0$ 
for $|r|\geq 2$,  for $n\geq 1$, $\psi_n(x):=\psi(|x|/n)\in C_0^\infty(\bR^d)$. 
Setting $\vp_x(y):=\kea(x-y)\psi_n(y)\in C_0^\infty$ in \eqref{equdZ}, 
where $\kea(x-y)=D^\alpha_x\ke(x-y)$, yields that for each $x\in\bR^d$ almost surely
$$
(u_t,\kea(x-\cdot)\psi_n)=(u_0,\kea(x-\cdot)\psi_n) 
+  \int_0^t\big(u_{s},\tilde\cL_s(\kea(x-\cdot)\psi_n)\big)\,ds
$$
\begin{equation}
\label{14.4.22}
+ \int_0^t\big(u_{s},\cM_s^k(\kea(x-\cdot)\psi_n)\big)\,dV^k_s
+ \int_0^t\int_{\frZ_0}\big(u_{s},J_s^{\eta}(\kea(x-\cdot)\psi_n)\big)\,\nu_0(d\frz)ds
\end{equation}
$$
+ \int_0^t\int_{\frZ_1}\big(u_{s},J_s^{\xi}(\kea(x-\cdot)\psi_n)\big)\,\nu_1(d\frz)ds
+\int_0^t\int_{\frZ_1}\big(u_{s-},I_s^{\xi}(\kea(x-\cdot)\psi_n)\big)\,\tilde N_1(d\frz,ds) 
$$
for all $t\in[0,T]$.
Then we notice that 
\begin{equation}
\label{Dke}
|\kea (x-y)|\leq\sum_{|\gamma|\leq m+2} |D^\gamma\ke (x-y) |\leq Nk_{2\varepsilon}(x-y),
\end{equation}
as well as that by Assumption for all $x,y\in\bR^d$, 
$s\in [0,T]$, $\frz_i\in\frZ_i$. $i=0,1$ and $n\geq 0$ we have 
$$
\sup_{x\in\bR^d}|D^k\psi_n| = n^{-k}\sup_{\bR^d}|D^k\psi|<\infty,
\quad
\text{for $k\in\bN$,}.
$$
$$
|\tilde{\cL}_s(\kea(x-y)\psi_n(y))| 
+ \sum_k|\cM^k_s(\kea(x-y)\psi_n(y))|^2
\leq N(K_0^2+K_1^2|y|^2+K_1^2|Y_s|^2),
$$
$$
|J_s^{\eta}(\kea(x-y)\psi_n(y))|
\leq\sup_{v\in\bR^d}|D^2_v(\kea(x-v)\psi_n(v))||\eta_{s}(y,\frz_0)|^2
\leq N|\eta_{s}(y,\frz_0)|^2
$$
$$
\leq N\bar{\eta}^2(\frz_0)(K_0^2+K_1^2|y|^2+K_1^2|Y_s|^2),
$$
and 
$$
|J_s^{\xi}(\kea(x-y)\psi_n(y))| + |I^{\xi}_{s}(\kea(x-y)\psi_n(y))|^2
$$
$$
\leq\sup_{v\in\bR^d}|D^2_v(\kea(x-v)\psi_n(v))||\xi_{s}(y,\frz_1)|^2
+ \sup_{v\in\bR^d}|D_v(\kea(x-v)\psi_n(v))|^2|\xi_{s}(y,\frz_1)|^2
$$
$$
\leq N|\xi_{s}(y,\frz_1)|^2 \leq N\bar{\xi}^2(\frz_1)(K_0^2+K_1^2|y|^2+K_1^2|Y_s|^2),
$$
for a constant $N=N(\varepsilon,m,d,K_0,K_1,K,K_\xi,K_\eta)$. 
Using
$$
\esssup_{t\in [0,T]} \int_{\bR^d}(1+|y|^2+|Y_t|^2)|u_t(y)|\,dy<\infty,\quad\text{(a.s.)}
$$
together with the estimates above, we can 
apply Lebesgue's theorem on 
Dominated Convergence to get that for all $x\in\bR^d$, 
$$
(u_t,\kea(x-\cdot)\psi_n)\to (u_t,\kea(x-\cdot)),
\quad (u_0,\kea(x-\cdot)\psi_n)\to (u_0,\kea(x-\cdot))\quad \text{and}
$$
$$
\int_0^t\big(u_s,\cA_s(\kea(x-\cdot)\psi_n)\big)\,ds
\rightarrow 
\int_0^t(u_s,\cA_s\kea(x-\cdot))\,ds
$$
as $n\to\infty$, almost surely uniformly in time, as well as that
$$
\lim_{n\to\infty}\int_0^t\big( u_s,\cM^k_s(\kea(x-\cdot)\psi_n(\cdot))\big)\,dV^k_s
 = \int_0^t\big( u_s,\cM^k_s\kea(x-\cdot)\big)\,dV^k_s,
$$
$$
\lim_{n\to\infty} \int_0^t\int_{\frZ_1}
(u_{s-},I_s^{\xi}(\kea(x-\cdot)\psi_n))\,\tilde N_1(d\frz,ds)
=
\int_0^t\int_{\frZ_1}(u_{s-},I_s^{\xi}\kea(x-\cdot))\,\tilde N_1(d\frz,ds)
$$
in probability, uniformly in time. Thus, letting $n\to\infty$ in \eqref{14.4.22} 
it remains to note that since $\cA$ acts in the $y$ variable,
$$
(u_s,\cA_s\kea(x-\cdot)) = \int_{\bR^d}u_s(y)\cA_s D^\alpha_x \ke(x-y)\,dy 
$$
$$
= D^\alpha_x\int_{\bR^d}u_s(y)\cA_s  \ke(x-y)\,dy =  D^\alpha (\cA^*_su_s)^\ep (x)
$$
for all $(\omega,s,x)\in \Omega\times [0,T]\times \bR^d$ 
if $\cA = \tilde{\cL},\cM^k$ or the identity, as well as 
for all $(\omega,s,x,\frz_i)\in \Omega\times [0,T]\times \bR^d\times\frZ_i$ 
if $\cA=J^\eta$ or $\cA=I^\xi,J^\xi$ with $i=0,1$ respectively.

\end{proof}
\begin{lemma}                                                                                                                           \label{lemma Ito Lp}
Let the Assumptions \ref{assumption SDE}  and \ref{assumption p} hold. 
Let $u$ be an $L_p$-solution of \eqref{equdZ}, 
$p\geq 2$ and assume moreover that $\esssup_{t\in [0,T]}|u_t|_{L_1}<\infty$. 
If $K_1\neq 0$ in Assumption \ref{assumption SDE} (ii), 
then assume additionally \eqref{esssup condition}.
Then for each $\varepsilon>0$ and integer $m\geq 0$, 
for any multi-index $\alpha=(\alpha_1,\dots,\alpha_d)$, $|\alpha|\leq m$, 
almost surely 
$$
|D^\alpha u^\ep_t|_{L_p}^p 
= |D^\alpha u^\ep_0|_{L_p}^p 
+ p\int_0^t 
\big(
|D^\alpha u^\ep_s|^{p-2}D^\alpha u^\ep_s,D^\alpha(\tilde{\mathcal{L}}^*_s  u_s)^\ep 
\big)\,ds
$$
$$
+ p\int_0^t 
\big(
|D^\alpha u^\ep_s|^{p-2}D^\alpha u^\ep_s,D^\alpha(\mathcal{M}_s^{k*} u_s)^\ep 
\big)\,dV^k_s 
$$
$$
+ \tfrac{p(p-1)}{2}\sum_k
\int_0^t
\big(|D^\alpha u_s^\ep|^{p-2}, |D^\alpha(\mathcal{M}_s^{k*} u_s)^\ep|^2 
\big)\,ds
$$
\begin{equation}
\label{Ito formula mu epsilon p norm}
+ p\int_0^t\int_{\frZ_0} 
\big(
|D^\alpha u^\ep_s|^{p-2}D^\alpha u^\ep_s,D^\alpha(J^{\eta *}_s\mu_s)^\ep
 \big)\,\nu_0(d\frz)ds  
\end{equation}
$$
+ p\int_0^t\int_{\frZ_1} 
\big(
|D^\alpha u^\ep_s|^{p-2}D^\alpha u^\ep_s,D^\alpha(J^{\xi *}_s\mu_s)^\ep 
\big)\,\nu_1(d\frz)ds 
$$
$$
+p\int_0^t\int_{\frZ_1} 
\big(|D^\alpha u^\ep_{s-}|^{p-2}D^\alpha u^\ep_{s-},D^\alpha(I^{\xi *}_s u_{s-})^\ep 
\big)\,\Nte(d\frz,ds)
 $$
 \begin{align*}                                                                     
 +\int_0^t\int_{\frZ_1}\int_{\bR^d}
 \Big\{\big|D^\alpha u_{s-}^\ep + D^\alpha(I_s^{\xi *}\mu_{s-})^\ep\big|^p 
&- |D^\alpha u_{s-}^\ep|^p \\
\nonumber
&- p|D^\alpha u_{s-}^\ep|^{p-2}D^\alpha u_{s-}^\ep D^\alpha(I_s^{\xi *} u_{s-})^\ep
\Big\}\,dxN_1(d\frz,ds)
 \end{align*}
holds for all $t\in[0,T]$.
\end{lemma}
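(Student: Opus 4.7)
The plan is to combine the pointwise evolution equation from Lemma \ref{lemma ediff} with an It\^o formula applied to $f(r)=|r|^p$, and then integrate in $x$. Since $p\geq2$, $f$ is $C^2$ with $f'(r)=p|r|^{p-2}r$ and $f''(r)=p(p-1)|r|^{p-2}$, so the semimartingale It\^o formula is available. The overall structure mirrors the proof of Lemma 5.2 in \cite{GG2}: the only new ingredient is that wherever $\ke$ appears there we now have $D^\alpha\ke$, and the same domination by $Nk_{2\varepsilon}$ we exploited via \eqref{Dke} is still in force. So the scheme below is, in effect, a translation of the argument of \cite{GG2} to the $D^\alpha$-mollified setting.

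First, I would fix $\omega$, $x\in\bR^d$ and $\varepsilon>0$, and read \eqref{ediff} as a real-valued c\`adl\`ag semimartingale identity for $t\mapsto D^\alpha u_t^\ep(x)$ whose finite-variation part is $\int_0^t D^\alpha(\tilde\cL^\ast_s u_s)^\ep(x)\,ds+\int_0^t\int_{\frZ_0}D^\alpha(J^{\eta\ast}_s u_s)^\ep(x)\nu_0(d\frz)ds+\int_0^t\int_{\frZ_1}D^\alpha(J^{\xi\ast}_s u_s)^\ep(x)\nu_1(d\frz)ds$, whose continuous martingale part is $\int_0^t D^\alpha(\cM^{k\ast}_s u_s)^\ep(x)\,dV_s^k$, and whose purely discontinuous martingale part is $\int_0^t\int_{\frZ_1} D^\alpha(I^{\xi\ast}_s u_{s-})^\ep(x)\,\Nte(d\frz,ds)$. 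Applying the standard It\^o formula for $f(r)=|r|^p$ to this semimartingale, pointwise in $(\omega,x)$, yields the six ``linear'' terms in \eqref{Ito formula mu epsilon p norm} before the $x$-integration together with a pure-jump correction
\[
\sum_{s\leq t}\bigl\{|D^\alpha u_s^\ep(x)|^p-|D^\alpha u_{s-}^\ep(x)|^p-p|D^\alpha u_{s-}^\ep(x)|^{p-2}D^\alpha u_{s-}^\ep(x)\,\Delta D^\alpha u_s^\ep(x)\bigr\},
\]
which, since every jump of $D^\alpha u^\ep$ is an atom of $N_1$ with mark $D^\alpha(I_s^{\xi\ast}u_{s-})^\ep(x)$, can be rewritten as an integral against $N_1(d\frz,ds)$ and thus produces the final line of \eqref{Ito formula mu epsilon p norm}.

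Next, I would integrate the resulting identity over $x\in\bR^d$. For the drift and the $ds$-integrated Brownian quadratic variation contributions ordinary Fubini suffices, so the essential step is to justify a stochastic Fubini for the $dV^k$ and $\tilde N_1$ integrals. As in Lemma \ref{lemma ediff}, the bound $|D^\gamma\ke(x-y)|\leq N k_{2\varepsilon}(x-y)$ of \eqref{Dke}, combined with Assumption \ref{assumption SDE}, the growth of $\sigma,\rho,B,\eta,\xi$, and the hypotheses $\esssup_t|u_t|_{L_1}<\infty$ and \eqref{esssup condition}, give pointwise dominations of the integrands that are uniformly in $L_2$ on $\Omega\times[0,T]\times\bR^d$ (respectively on $\Omega\times[0,T]\times\frZ_1\times\bR^d$ against $\nu_1\otimes ds\otimes dx$). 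This matches the setting of the classical stochastic Fubini theorems for Wiener and for compensated Poisson integrals and permits the interchange of $\int_{\bR^d}\,dx$ with $\int_0^t\cdot\,dV_s^k$ and with $\int_0^t\int_{\frZ_1}\cdot\,\Nte(d\frz,ds)$.

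Finally, I would collect the terms. The $dV^k$ and $\tilde N_1$ terms (with integrands $p|D^\alpha u^\ep|^{p-2}D^\alpha u^\ep D^\alpha(\cM^{k\ast}u)^\ep$ and $p|D^\alpha u^\ep_{s-}|^{p-2}D^\alpha u^\ep_{s-}D^\alpha(I^{\xi\ast}u_{s-})^\ep$) appear as the linear contributions of the It\^o formula. The ``$\nu_1\,ds$-compensated'' contribution, together with the genuinely nonlinear jump correction, gives the uncompensated $N_1$ term at the end of \eqref{Ito formula mu epsilon p norm}; summability of this remainder follows from the elementary bound $||a+b|^p-|a|^p-p|a|^{p-2}ab|\leq N_p(|a|^{p-2}b^2+|b|^p)$, Assumption \ref{assumption p}, and $|\bar\xi|_{L_2}<\infty$. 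I expect the main obstacle to be the bookkeeping in this last step, i.e.\ formulating the stochastic Fubini for the Poisson integral so that the $\tilde N_1$-martingale piece and the $N_1$-integral of the Taylor remainder are cleanly separated, and verifying the joint measurability/integrability hypotheses; the computational work in Lemma \ref{lemma ediff} essentially provides all the required estimates, so this is a matter of assembly rather than new analysis.
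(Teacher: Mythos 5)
Your scheme (read \eqref{ediff} pointwise in $x$ as a real-valued semimartingale, apply the classical It\^o formula to $|r|^p$, then integrate over $x$ and interchange with the stochastic integrals) is the mechanism that ultimately underlies the formula, but it is not what the paper does, and as written it has a genuine gap at the decisive step. The paper does not re-derive anything: it invokes the ready-made It\^o formula for $L_p$-norms of jump processes (Theorem 5.1 in \cite{GG2}, which rests on \cite{GW}), whose hypotheses are a list of purely almost-sure integrability conditions (the quantities $A$, $A_\eta$, $A_\xi$, $B$, $G$, $H$ and the local conditions over finite-measure sets $\Gamma$ displayed in the proof), and the entire argument consists of verifying that list for $D^\alpha u^\ep$. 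The only new observation needed is that $D^\alpha(\cA^*u)^\ep$ is the mollification of $\cA^*u$ by $k_{\varepsilon,\alpha}:=D^\alpha\ke$, so that, thanks to \eqref{Dke}, the verification is word for word the one carried out in \cite{GG2} for $\alpha=0$.

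The concrete gap in your proposal is the justification of the interchange of $\int_{\bR^d}dx$ with the $dV^k$ and $\Nte$ integrals: you assert that the dominations are ``uniformly in $L_2$ on $\Omega\times[0,T]\times\bR^d$''. No such moment bounds are available under the hypotheses of the lemma: one only has almost sure finiteness of $\esssup_{t\le T}|u_t|_{L_1}$ and of the second-moment quantity in \eqref{esssup condition}, and no expectation of any norm of $u$ is assumed finite, so the classical stochastic Fubini theorems you appeal to do not apply as stated. To push your route through you would need a localization by stopping times, a jointly measurable version of the pointwise It\^o identity (the exceptional null set depends on $x$), and an a.s. argument separating the compensated $\Nte$-martingale part from the $N_1$-integral of the Taylor remainder — which together amount to reproving the $L_p$ It\^o formula of \cite{GW}. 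This is precisely the nontrivial content the paper outsources to Theorem 5.1 of \cite{GG2}; so either cite that theorem and verify its almost-sure hypotheses (as the paper does, via $k_{\varepsilon,\alpha}$ and \eqref{Dke}), or replace the uniform-$L_2$ claim by an actual localization argument. As it stands, the key interchange — the heart of the lemma — is not justified.
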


\begin{proof}
We  apply the It\^o formula, Theorem 5.1 in \cite{GG2}, 
to $|D^\alpha u^\ep_t|_{L_p}^p$. In order to do that, 
we need to verify that
almost surely for each $x\in \bR^d$ and $\alpha$, 
such that $0\leq |\alpha|\leq m$,
$$
\int_0^T|D^\alpha (\tilde{\cL}_s^* u_s)^{(\varepsilon)}(x)|\,ds<\infty,\quad
\int_0^T\sum_k|D^\alpha(\cM^{k*}_s u_s)^{(\varepsilon)}(x)|^2ds<\infty,
$$
\begin{equation*}
\label{Fubini 1}
\int_0^T\int_{\frZ_0} 
|D^\alpha (J^{\eta *}_s u_s)^{(\varepsilon)}(x)|\,\nu_0(d\frz)\,ds<\infty,
\quad 
\int_0^T\int_{\frZ_1} 
|D^\alpha (J^{\xi *}_s u_s)^{(\varepsilon)}(x)|\,\nu_1(d\frz)\,ds<\infty,
\end{equation*}
$$
\int_0^T\int_{\frZ_1} 
|D^\alpha (I^{\xi *}_s u_s)^{(\varepsilon)}(x)|^2\,\nu_1(d\frz)ds<\infty,
$$
that  for every finite set $\Gamma\in\cB(\bR^d)$, almost surely
$$
\int_{\Gamma}\int_0^T
|D^\alpha (\tilde{\cL}_s^* u_s)^{(\varepsilon)}(x)|\,dxds<\infty,
\quad
\int_{\Gamma}
\Big(
\int_0^T\sum_k|D^\alpha(\cM^{k*}_s u_s)^{(\varepsilon)}(x)|^2ds
\Big)^{1/2}\,dx<\infty,
$$
\begin{equation*}
\label{Fubini 2}
\int_{\Gamma}\int_0^T\int_{\frZ_0} 
|D^\alpha (J^{\eta *}_s u_s)^{(\varepsilon)}(x)|\,\nu_0(d\frz)\,dxds<\infty,
\quad 
\int_{\Gamma}\int_0^T\int_{\frZ_1} 
|D^\alpha (J^{\xi *}_s u_s)^{(\varepsilon)}(x)|\,\nu_1(d\frz)\,dxds<\infty,
\end{equation*}
$$
\int_{\Gamma}
\Big(
\int_0^T\int_{\frZ_1} 
|D^\alpha (I^{\xi *}_s u_s)^{(\varepsilon)}(x)|^2\,\nu_1(d\frz)ds
\Big)^{1/2}\,dx<\infty,
$$
as well as that almost surely 
$$
A:=\int_0^T\int_{\bR^d}
|D^\alpha (\tilde{\cL}_s^* u_s)^{(\varepsilon)}(x)|^p\,dxds<\infty,
$$
$$
A_\eta:= \int_0^T\int_{\bR^d}
\Big|
\int_{\frZ_0} D^\alpha (J^{\eta *}_s u_s)^{(\varepsilon)}(x)\nu_0(d\frz)
\Big|^p\,dxds<\infty, 
$$
$$
A_\xi:= \int_0^T\int_{\bR^d}
\Big|
\int_{\frZ_1} D^\alpha (J^{\xi *}_s u_s)^{(\varepsilon)}(x)\nu_1(d\frz)
\Big|^p\,dxds<\infty, 
$$
$$
B:=\int_0^T\int_{\bR^d}
\big(
\sum_k|D^\alpha(\cM^{k*}_s u_s)^{(\varepsilon)}(x)|^2
\big)^{p/2}\,dxds<\infty, 
$$
$$
G:=\int_0^T\int_{\bR^d}\int_{\frZ_1}
|D^\alpha(I^{\xi *}_s u_s)^{(\varepsilon)}(x,\frz)|^p\,\nu_1(d\frz)dxds<\infty, 
$$
$$
H:=\int_0^T\int_{\bR^d}
\Big(
\int_{\frZ_1}|D^\alpha(I^{\xi *}_s u_s)^{(\varepsilon)}(x,\frz)|^2\,\nu_1(d\frz)
\Big)^{p/2}dxds<\infty.  
$$
For $\alpha=0$ the claim is Lemma 5.4 in \cite{GG2} 
and the estimates can be found in the proof of the 
preceding Lemma 5.2 therein. 
To prove the case  where  $0<|\alpha|\leq m$, 
we note that for $\cA=\tilde{\cL},\cM,I^\xi,J^\xi,J^\eta$ we have
$$
D^\alpha(\cA^*u)^\ep = \int_{\bR^d}D^\alpha_x(\cA_y\ke (x-y))u(y)\,dy 
= \int_{\bR^d}(\cA_y k_{\varepsilon,\alpha}(x-y))u(y)\,dy,
$$
for $k_{\varepsilon,\alpha}(x) = D^\alpha\ke(x)$. 
Hence, a word for word repetition of the proof of 
Lemma 5.2 \& 5.4 in \cite{GG2}, 
where we replace $\ke$ by $k_{\varepsilon,\alpha}$ 
and recall \eqref{Dke}, yields the desired result.
\end{proof}

\begin{lemma}                                                                 \label{lemma Ito Wmp}
Let Assumptions \ref{assumption SDE}, 
\ref{assumption p}, \ref{assumption SDE2} and 
\ref{assumption estimates} hold with an integer 
$m\geq 0$ and let $p\geq 2$ be even. 
Let $u$ be an $W^m_p$-solution to \eqref{equdZ}, 
such that $\E|u_0|^p_{W_p^m}<\infty$ 
and almost surely $\esssup_{t\in [0,T]}|u_t|_{L_1}<\infty$.
Then 
\begin{equation}
\label{supremum estimate wmp}
\E\sup_{t\in [0,T]}|u_t|_{W_p^m}^p\leq N\E|u_0|_{W^m_p}^p
\end{equation}
for a constant 
$N=N(m,d,p,K,K_\eta,K_\xi,L,T,\lambda, |\bar\xi|_{L_2(\frZ_1)},|\bar\eta_{L_2(\frZ_1)})$.
\end{lemma}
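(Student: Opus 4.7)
The plan is to apply the It\^o formula from Lemma \ref{lemma Ito Lp} to $|D^\alpha u^{(\varepsilon)}_t|_{L_p}^p$ for each multi-index $\alpha$ with $|\alpha|\le m$, sum the resulting identities over $\alpha$, bound each drift integrand by $N|u_s|_{W_p^m}^p$ using the Sobolev estimates of Section \ref{sec estimates}, apply Burkholder--Davis--Gundy (BDG) together with Young's inequality to the stochastic integrals, and finally close the argument by Gronwall's lemma followed by passage to the limit $\varepsilon\downarrow 0$ via Fatou's lemma.

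For the drift, the top-order pairing $p((D^\alpha u^{(\varepsilon)}_s)^{p-1}, D^\alpha((a^{ij}D_{ij})^*u_s)^{(\varepsilon)})$ loses one derivative on its own but becomes tame once combined with the quadratic-bracket contribution $\tfrac{p(p-1)}{2}\sum_k |D^\alpha(\cM_s^{k*}u_s)^{(\varepsilon)}|^2$ coming from the It\^o correction. Writing $a^{ij}=\tfrac12(\sigma^{ik}\sigma^{jk}+\rho^{i\ell}\rho^{j\ell})$, the $\rho$-piece paired with the $\rho^{ik}D_i$-part of $\cM^k$ falls directly under Lemma \ref{lemma dpe1} (applied with $\sigma\leftarrow\rho$), while the $\sigma$-piece is bounded separately by Corollary \ref{p3 cor lemma dpe1}. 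The remaining first-order drifts $b^iD_i$ and $\beta^k\rho^{ik}D_i$, the zero-order piece $\beta^kB^k$, and the bracket cross-terms involving $\rho^{ik}D_iu$ with $B^ku$ all fit the hypotheses of Lemma \ref{lemma pe5}; the joint bound on $D^k(\rho B)$ in Assumption \ref{assumption SDE2}(i) together with Assumption \ref{assumption estimates}(ii) is exactly what licenses this application for the $\rho B$-cross-terms. The $J^\eta$ compensator drift is bounded pointwise in $\frz_0$ by Corollary \ref{corollary dJ} and integrated against $\nu_0$ via Assumption \ref{assumption p}. For the $\xi$-contribution, decompose the last line of \eqref{Ito formula mu epsilon p norm} using $N_1=\tilde N_1+\nu_1\,dt$; the $\nu_1$-part of the three-term integrand combines with the $J^{\xi*}$ compensator drift to form exactly the quantity bounded by $N\bar\xi(\frz_1)^2|u_s|_{W_p^m}^p$ in Lemma \ref{lemma dpe3}, integrable in $\frz_1$ under Assumption \ref{assumption p}.

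For the martingale terms, BDG gives $\E\sup_{t\le T}|M_t|\le N\,\E\,[M,M]_T^{1/2}$. The Brownian integrand is bounded, after H\"older, by $|u^{(\varepsilon)}_s|_{W_p^m}^{p-1}|D^\alpha(\cM_s^{k*}u_s)^{(\varepsilon)}|_{L_p}$, and by integration by parts and Assumption \ref{assumption SDE2}(i) the second factor is controlled by $N|u_s|_{W_p^m}$; Young's inequality then absorbs a small multiple of $\E\sup_{s\le T}|u^{(\varepsilon)}_s|^p_{W_p^m}$ into the left-hand side. Both jump-martingale contributions---the one explicit in \eqref{Ito formula mu epsilon p norm} and the one extracted from $N_1-\nu_1\,dt$ in the three-term integrand---are handled by the Poisson version of BDG, producing an $L^2(\nu_1)$-piece and an $L^p(\nu_1)$-piece. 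Pointwise in $\frz_1$, the relevant $L_p$-norms of $D^\alpha(I^{\xi*}u)^{(\varepsilon)}$ are bounded by $N\bar\xi(\frz_1)|u_s|_{W_p^m}$ via integration by parts, the diffeomorphism estimates of Remark \ref{remark diffeom} and Lemma \ref{lemma 1.14.3.22}, and the representation in Corollary \ref{corollary 1.2.10.22}; the uniform bound $\bar\xi\le K_\xi$ in Assumption \ref{assumption p} converts higher powers of $\bar\xi$ into $\bar\xi^2$, making everything integrable in $\frz_1$.

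Assembling drift and martingale estimates, summing over $|\alpha|\le m$ and applying Gronwall's inequality yields $\E\sup_{t\le T}|u^{(\varepsilon)}_t|^p_{W_p^m}\le N\,\E|u_0|^p_{W_p^m}$ uniformly in $\varepsilon>0$. Since $|D^\alpha u^{(\varepsilon)}_t|_{L_p}\to |D^\alpha u_t|_{L_p}$ for each $t$ and each $|\alpha|\le m$ by standard mollifier properties when $u_t\in W_p^m$, Fatou's lemma delivers \eqref{supremum estimate wmp}. The principal technical obstacle is arranging the pairings so that the top-order drift contribution is annihilated against the martingale bracket (content of Lemma \ref{lemma dpe1}) and so that the non-sign-definite jump-compensator three-term piece is paired with the $J^{\xi*}$ drift (content of Lemma \ref{lemma dpe3}); outside these exactly matched identities the argument would be off by one full derivative and no $W_p^m$-estimate would be possible.
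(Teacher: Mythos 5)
Your overall architecture (It\^o formula from Lemma \ref{lemma Ito Lp}, drift bounds from Lemma \ref{lemma dpe1}, Corollary \ref{p3 cor lemma dpe1}, Lemma \ref{lemma pe5}, Corollary \ref{corollary dJ}, and the pairing of the $\nu_1$-part of the three-term jump integrand with the $J^{\xi*}$ compensator so that Lemma \ref{lemma dpe3} applies, followed by Davis/BDG, Young, Gronwall and Fatou as $\varepsilon\downarrow 0$) coincides with the paper's proof, and your treatment of the drift terms is essentially the paper's. The genuine gap is in the martingale estimates. You bound the Brownian bracket after H\"older by $|u^{(\varepsilon)}_s|_{W^m_p}^{p-1}\,|D^\alpha(\cM^{k*}_su_s)^{(\varepsilon)}|_{L_p}$ and claim the second factor is $\leq N|u_s|_{W^m_p}$, and likewise you claim $|D^\alpha(I^{\xi*}_su_s)^{(\varepsilon)}|_{L_p}\leq N\bar\xi(\frz)|u_s|_{W^m_p}$ pointwise in $\frz$. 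Both claims are false uniformly in $\varepsilon$: since $\cM^{k*}u=-D_i(\rho^{ik}u)+B^ku$ and $I^{\xi*}u=u\circ\tau^{-1}\,|\det D\tau^{-1}|-u$, for $|\alpha|=m$ these expressions carry $m+1$ derivatives of $u$ (the top term of $D^\alpha I^{\xi*}u$ contains $(D^\alpha u)\circ\tau^{-1}-D^\alpha u$, whose $L_p$-size of order $\bar\xi$ costs one more derivative), so their mollified $L_p$-norms behave like $\varepsilon^{-1/2}|u|_{W^m_p}$; once you have separated the factors by H\"older, the cancellation that makes the estimate derivative-neutral is lost, and since the BDG bound must be uniform in $\varepsilon$ before letting $\varepsilon\to0$, the absorption/Gronwall step cannot be closed this way.

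The paper avoids exactly this by never splitting the pairings: the bracket of $\zeta_1$ is controlled through the estimate of Lemma \ref{lemma pe5} on the full pairing $\big((D^\alpha u^{(\varepsilon)})^{p-1},D^\alpha(\cM^{k*}u)^{(\varepsilon)}\big)$ (take $b\equiv1$ in \eqref{Dpe4_2}; the kernel-level symmetrization via \eqref{Drho} and \eqref{qrho} is what removes the extra derivative), and the two jump martingales are first merged, $\zeta_2+\zeta_3=\zeta$, so that the integrand becomes the difference of $p$-th powers $|D^\alpha(T^{\xi*}u)^{(\varepsilon)}|^p_{L_p}-|D^\alpha u^{(\varepsilon)}|^p_{L_p}$, which Lemma \ref{lemma dpe4} bounds by $N\bar\xi(\frz)|u|^p_{W^m_p}$ uniformly in $\varepsilon$. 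Note also that treating the two jump pieces separately, as you propose, is not supported by the lemmas at hand: only the combinations $\cQ^{(1)}_p+\cR_p$ (Lemma \ref{lemma dpe3}) and the difference of $p$-th powers (Lemma \ref{lemma dpe4}) admit $\varepsilon$-uniform bounds, while the individual bound on $\cR_p$ in \eqref{7.14.4}--\eqref{7.14.3} is $\varepsilon$-dependent and serves only to justify the compensator decomposition. A secondary omission is the localization: since $\E\sup_t|u_t|^p_{W^m_p}$ is not known to be finite a priori, the stopping-time argument (the sequences $\tau_n$, $\rho^\varepsilon_{n,k}$ in the paper) is needed before Gronwall and before absorbing $\tfrac12\E\sup_t|u_{t\wedge\rho_n}|^p_{W^m_p}$ into the left-hand side.
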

\begin{proof}
For $m=0$   the claim is Lemma 5.4 in \cite{GG2}. 
We proceed similarly here. For the present case, 
fix a multi-index $\alpha$ such that $0\neq|\alpha|\leq m$, and define
\begin{align}
Q_p(\alpha, b, \sigma, \rho, \beta,u, k_{\varepsilon})
&=p\big((D^\alpha u^\ep)^{p-1},D^\alpha(\tilde\cL^* u)^\ep \big)                                                
\\
&+\tfrac{p(p-1)}{2}\sum_k
\big(
(D^\alpha u^\ep)^{p-2}, (D^\alpha (\mathcal{M}^{k*}u)^\ep)^2 
\big),  
\end{align}
$$    
\cQ_p^{(0)}(\alpha,\eta(\frz_0),u, k_{\varepsilon})
=p\big(
(D^\alpha u^{\ep})^{p-1},D^\alpha(J^{\eta(\frz_0)*} u)^{\ep}
\big),                          
$$
\begin{equation}                                                                         \label{Q}
\cQ_p^{(1)}(\alpha,\xi(\frz_1), u, k_{\varepsilon})
=
p((D^\alpha u^{\ep})^{p-1}, D^\alpha (J^{\xi(\frz_1)*}u)^{\ep}),
\end{equation}
$$                                                    
\cR_p(\alpha,\xi(\frz_1),u,k_{\varepsilon})
=|D^\alpha u^{\ep} + D^\alpha(I^{\xi(\frz_1)*}u)^{\ep}|^p_{L_p} 
- |D^\alpha u^{\ep}|^p_{L_p} 
- p((D^\alpha u^{\ep})^{p-1},D^\alpha (I^{\xi(\frz_1)*}u)^{\ep}),                   
$$
for $u\in W^m_p$, $\beta\in\bR^{d'}$, 
functions $b$, $\sigma$ and $\rho$ on $\bR^d$, 
with values in $\bR^d$, $\bR^{d\times d_1}$ and $\bR^{d\times d'}$, 
respectively, 
and $\bR^d$-valued functions $\eta(\frz_0)$ 
and $\xi(\frz_1)$ for each $\frz_i\in\frZ_i$, $i=0,1$, 
where $\beta_t=B_t(X_t)$,
$$ 
\tilde\cL=\tfrac{1}{2}(\sigma^{il}\sigma^{jl}+\rho^{ik}\rho^{jk})D_{ij}
+
\beta^l\rho^{il}D_i+\beta^lB^l,   
\quad
\cM^k=\rho^{ik}D_i+B^k, \quad k=1,2,...,d'. 
$$

By Lemma \ref{lemma Ito Lp} almost surely
$$
d|D^\alpha u_t^{(\varepsilon)}|^p_{L_p}               
=\cQ_p(\alpha,b_t, \sigma_t, \rho_t, \beta_t,u_t, k_{\varepsilon})\,dt                                                                  
 +\int_{\frZ_0}
 \cQ_p^{(0)}(\alpha,\eta_t(\frz), u_t, k_{\varepsilon}) \,\nu_0(d\frz)\,dt 
 $$
 \begin{equation}                                                                                            \label{7.23.1}                                                                                       
 +\int_{\frZ_1}\cQ_p^{(1)}(\alpha,\xi_t(\frz), u_t, k_{\varepsilon})\,\nu_1(d\frz)\,dt
+\int_{\frZ_1}\cR_p(\alpha,\xi_{t}(\frz), u_{t-}, k_{\varepsilon})\,N_1(d\frz,dt) 
+d\zeta_1(\alpha,t)+d\zeta_2(\alpha,t),                        
\end{equation}
for all $t\in [0,T]$  and 
\begin{equation}
\label{30.9.21.7}
\zeta_1(\alpha,t)
=p\int_0^t 
\big(
(D^\alpha u^\ep_s)^{p-1},D^\alpha(\mathcal{M}_s^{k*} u_s)^\ep 
\big)\,dV^k_s, 
\end{equation}
$$
\zeta_2(\alpha,t)=p\int_0^t\int_{\frZ_1} 
\big(
(D^\alpha u^\ep_{s})^{p-1},D^\alpha (I_s^{\xi *} u_{s})^\ep 
\big)\,\Nte(d\frz,ds)\quad t\in[0,T]   
$$
are local martingales under $P$. 
We write 
\begin{equation}                                                                 \label{7.23.2}
\int_{\frZ_1}
\cR_p(\alpha, \xi_{t}(\frz_1), u_{t-}, k_{\varepsilon})\,N_1(d\frz,dt)
=\int_{\frZ_1}
\cR_p(\alpha,\xi_{t}(\frz_1), u_{t-}, k_{\varepsilon})\,\nu_1(d\frz)dt
+d\zeta_3(\alpha,t)
\end{equation}
with 
$$
\zeta_3(\alpha,t)
=\int_0^t\int_{\frZ_1}\cR_p(\alpha,\xi_s(\frz), u_{s-}, k_{\varepsilon})\,N_1(d\frz,ds)
-\int_0^t\int_{\frZ_1}\cR_p(\alpha,\xi_s(\frz), u_{s-}, k_{\varepsilon})\,\nu_1(d\frz)ds, 
$$
which we can justify if we show 
\begin{equation}
\label{6.10.21.3}
A:=\int_0^T
\int_{\frZ_1}
|\cR_p(\alpha,\xi_s(\frz), u_s, k_{\varepsilon})|\,
\nu_1(d\frz)\,ds<\infty \,\text{(a.s.)}.  
\end{equation}
To this end observe that by 
Taylor's formula 
\begin{equation}                                                                                    \label{7.14.4}                                                                                                  
0\leq\cR_p(\alpha,\xi_t(\frz), u_t, k_{\varepsilon}))
\leq N\int_{\bR^d}
(D^\alpha u^{(\varepsilon)}_t)^{p-2}(
D^\alpha(I^{\xi(\frz) *}u_t)^{(\varepsilon)})^{2}
+(D^\alpha (I^{\xi(\frz) *}u_t)^{(\varepsilon)})^{p}\,dx 
\end{equation}
with a constant $N=N(d,p)$. Hence 
$$
\int_{\frZ_1}\cR_p(\alpha,\xi_t(\frz), u_t, k_{\varepsilon}))\,\nu_1(d\frz)
$$
$$
\leq N\int_{\bR^d}
(D^\alpha u^{(\varepsilon)}_t)^{p-2}|
D^\alpha (I^{\xi(\frz) *}u_t)^{(\varepsilon)}|_{L_2(\frZ_1)}^{2}
+|D^\alpha (I^{\xi(\frz) *}u_t)^{(\varepsilon)}|^{p}_{L_p(\frZ_1)}\,dx
$$
$$
\leq N'\big(
|D^\alpha u_t^{(\varepsilon)}|^p_{L_p}+A_1(t)+A_2(t)\big)
$$
with
\begin{equation}
\label{6.10.21.2}
A_1(t)
=\int_{\bR^d}|D^\alpha (I^{\xi(\frz) *}u_t)^{(\varepsilon)}|^{p}_{L_2(\frZ_1)}\,dx,
\quad
A_2(t)=
\int_{\bR^d}|D^\alpha (I^{\xi(\frz) *}u_t)^{(\varepsilon)}|^{p}_{L_p(\frZ_1)}\,dx
\end{equation}
and constants $N$ and $N'$ depending only on $d$ and $p$. 
By Minkowski's inequality and using again 
that $D^\alpha_xI^\xi\ke(x-y) = I^\xi D^\alpha_x\ke(x-y)$,
$$
|D^\alpha u_t^{(\varepsilon)}|^p_{L_p}
=\int_{\bR^d}\Big|
\int_{\bR^d}D^\alpha_x k_{\varepsilon}(x-y)\,u_t(y)\,dy
\Big|^p\,dx,
$$
\begin{equation}                                                                              \label{7.14.1}
\leq
\Big|
\int_{\bR^d}|D^\alpha k_{\varepsilon}|_{L_p}\,|u_t(y)|\,dy
\Big|^p
\leq |D^\alpha k_{\varepsilon}|_{L_p}^p |u_t|_{L_1}^p, 
\end{equation}
$$
A_1(t)=\int_{\bR^d}
\Big|
\int_{\frZ_1}
\big|
\int_{\bR^d}
\big(
(D^\alpha k_{\varepsilon})(\cdot-y-\xi_t(y,\frz))-(D^\alpha k_{\varepsilon})(\cdot-y)
\big)\,u_t(y)\,dy 
\big|^2
\nu_1(d\frz)
\Big|^{p/2}dx
$$
$$
\leq \Big|\int_{\frZ_1}
\Big|
\int_{\bR^d}
|(D^\alpha k_{\varepsilon})(\cdot-y-\xi_t(y,\frz))-(D^\alpha k_{\varepsilon})(\cdot-y)|
|u_t(y)|\,dy
\Big|^{2}_{L_p}
\nu_1(d\frz)\Big|^{p/2}
$$
$$
\leq \Big|
\int_{\frZ_1}
\Big|
\int_{\bR^d}
|D^{\alpha+1} k_{\varepsilon}|_{L_p}\bar\xi(\frz_1) 
(K_0+K_1|y|+K_1|Y_t|) |u_t(y)|\,dy
\Big|^{2}
\nu_1(d\frz)\Big|^{p/2}
$$
\begin{equation}                                                                              \label{7.14.2}
\leq 
|D^{\alpha +1}k_{\varepsilon}|^p_{L_p}
|\bar\xi|_{L_2(\frZ_1)}^p\Big(\int_{\bR^d}
(K_0 + K_1|y|+K_1|Y_t|)|u_t(y)|\,dy \Big)^p,
\end{equation}
$$
$$
where $D^{\alpha+1} = D\,D^\alpha$ and similarly, 
using Assumption \ref{assumption p},
$$
A_2(t)= \int_{\bR^d}
\int_{\frZ_1}
\Big|
\int_{\bR^d}
\big(
(D^\alpha k_{\varepsilon})(x-y-\xi(t,y,\frz))-(D^\alpha k_{\varepsilon})(x-y)
\big)\,u_t(y)\,dy
\Big|^{p}
\nu_1(d\frz)dx
$$
$$
\leq\int_{\frZ_1}
\Big|\int_{\bR^d}
\big|
(D^\alpha k_{\varepsilon})(\cdot-y-\xi(t,y,\frz))-(D^\alpha k_{\varepsilon})(\cdot-y))
\big|_{L_p}|u_t(y)|\,dy
\Big|^{p}
\nu_1(d\frz)
$$
\begin{equation}                                                                              \label{7.14.3}
\leq K^{p-2}_\xi|D^{\alpha +1}k_{\varepsilon}|^p_{L_p}
|\bar\xi|^2_{L_2(\frZ_1)}\Big(\int_{\bR^d}
(K_0 + K_1|y|+K_1|Y_t|)|u_t(y)|\,dy \Big)^p.
\end{equation}                                                                            
By \eqref{7.14.4}--\eqref{7.14.3}  we have a constant 
$N=N(p,d,\varepsilon,|\bar\xi|_{L_2{(\frZ_1)}},K_\xi)$ such that 
$$
A\leq N\int_0^T |D^\alpha u_t^\ep|_{L_p}^p 
+ N\int_0^T
\Big(\int_{\bR^d}(K_0 + K_1|y|+K_1|Y_t|)|u_t(y)|\,dy 
\Big)^{p}dt<\infty \,\text{(a.s.)}.   
$$
Next we claim  that, with the operator $T^{\xi}$ 
defined in \eqref{equ operators TIJ}, we have
\begin{equation}                                                                    \label{7.19.1}
\zeta_2(\alpha,t)+\zeta_3(\alpha,t)
=\int_0^t\int_{\frZ_1}
|D^\alpha(T^{\xi *}_s  u_s)^{(\varepsilon)}|^p_{L_p}
-|D^\alpha u_s^{(\varepsilon)}|^p_{L_p}
\tilde N_1(d\frz,ds)=:\zeta(\alpha, t)\quad\text{for $t\in[0,T]$}. 
\end{equation}
For that purpose not first that $D^\alpha u^{\ep} 
+ D^\alpha(I^{\xi(\frz_1)*}u)^{\ep} = D^\alpha(T^{\xi *}  u_s)^{(\varepsilon)}$. 
To see that the stochastic integral $\zeta(\alpha,t)$  
is well-defined as an It\^o integral 
note that by Lemma \ref{lemma dpe4},
\begin{equation}                                                                               \label{7.27.1}                                                          
\int_0^T\int_{\frZ_1}||D^\alpha (T^{\xi *}u_s)^{(\varepsilon)}|^p_{L_p}
-|D^\alpha u_s^{(\varepsilon)}|^p_{L_p}|^2\,\nu_1(d\frz)ds
\leq N|\bar\xi|^2_{L_2(\frZ_1)}
\int_{0}^T|u_s|^{2p}_{W^m_p}\,ds<\infty\,\text{(a.s.)}
\end{equation}
with a constant $N=N(d,p,m,\lambda,K_\xi)$. 
Since $\frZ_1$ is $\sigma$-finite, 
there is an increasing sequence $(\frZ_{1n})_{n=1}^{\infty}$,  
$\frZ_{1n}\in\cZ_1$,  
such that $\nu_1(\frZ_{1n})<\infty$ for every $n$ 
and $\cup_{n=1}^{\infty}\frZ_{1n}=\frZ_1$. 
Then it is easy to see that 
$$
\bar\zeta_{2n}(\alpha,t)=p\int_0^t\int_{\frZ_1}{\bf1}_{\frZ_{1n}}(\frz)
\big((D^\alpha u^\ep_{s})^{p-1},D^\alpha (I_s^{\xi *} u_{s})^\ep \big)\,N(d\frz,ds), 
$$
$$
\hat\zeta_{2n}(\alpha,t)=p\int_0^t\int_{\frZ_1}{\bf1}_{\frZ_{1n}}(\frz)
\big((D^\alpha u^\ep_{s})^{p-1},D^\alpha (I_s^{\xi *} u_{s})^\ep \big)\,\nu_1(d\frz)ds, 
$$
$$
\bar\zeta_{3n}(\alpha,t)=\int_0^t\int_{\frZ_1}
{\bf1}_{\frZ_{1n}}(\frz)\cR_p(\alpha,\xi_s(\frz), u_{s-}, k_{\varepsilon})\,N_1(d\frz,ds),
$$
$$
\hat\zeta_{3n}(\alpha,t)=\int_0^t\int_{\frZ_1}{\bf1}_{\frZ_{1n}}(\frz)
\cR_p(\alpha,\xi_s(\frz), u_{s-}, k_{\varepsilon})\,\nu_1(d\frz)ds
$$
are well-defined, and 
$$
\zeta_2(\alpha,t)
=\lim_{n\to\infty}(\bar\zeta_{2n}(\alpha,t)-\hat\zeta_{2n}(\alpha,t)), 
\quad
\zeta_3(\alpha,t)
=\lim_{n\to\infty}\bar\zeta_{3n}(\alpha,t)-\lim_{n\to\infty}\hat\zeta_{3n}(\alpha,t), 
$$
where the limits are understood in probability. 
Hence 
$$
\zeta_2(\alpha,t)+\zeta_3(\alpha,t)=\lim_{n\to\infty}\Big(\bar\zeta_{2n}(\alpha,t)
+\bar\zeta_{3n}(\alpha,t)-\big(\hat\zeta_{2n}(t)+\hat\zeta_{3n}(\alpha,t)\big)\Big)
$$
$$
=\lim_{n\to\infty}\Big(\int_0^t\int_{\frZ_1}{\bf1}_{\frZ_{1n}}(\frz)
(|D^\alpha (T^{\xi *} u_s)^{(\varepsilon)}|^p_{L_p}-|D^\alpha u_s^{(\varepsilon)}|^p_{L_p})
\tilde N_1(d\frz,ds)\Big)=\zeta(\alpha,t), 
$$
which completes the proof of \eqref{7.19.1}. 
Consequently, from \eqref{7.23.1}-\eqref{7.23.2}
we have
$$
d|D^\alpha u_t^{(\varepsilon)}|^p_{L_p}               
=\cQ_p(\alpha,b_t, \sigma_t, \rho_t, \beta_t,u_t, k_{\varepsilon})\,dt                                                                  
 +\int_{\frZ_0}
 \cQ_p^{(0)}(\alpha,\eta_t(\frz_0),u_t, k_{\varepsilon}) \,\nu_0(d\frz)\,dt 
 $$
 \begin{equation}                                                                                  \label{7.23.3}                                                                                                                                                                              
 +\int_{\frZ_1}\cQ_p^{(1)}(\alpha,\xi_t(\frz_1), u_t, k_{\varepsilon})
 +\cR_p(\alpha,\xi_{t}(\frz_1), u_{t}, k_{\varepsilon})\,\nu_1(d\frz)\,dt
+d\zeta_1(\alpha,t)+d\zeta(\alpha,t).                       
\end{equation}
By  Lemma \ref{lemma dpe1}, Corollary \ref{p3 cor lemma dpe1} 
and Lemma \ref{lemma pe5} we have 
\begin{equation}                                                                    \label{7.27.3}
Q_p(\alpha,b_s, \sigma_s, \rho_s, \beta_s,u_s, k_{\varepsilon})
\leq N(L^2+K^2)|u_s|_{W^m_p}^p
\end{equation}
with a constant $N=N(d,p,m)$, 
and by Lemma \ref{lemma dpe3} and Corollary \ref{corollary dJ}, 
using that $\bar\xi\leq K_\xi$ and $\bar\eta\leq K_\eta$,
\begin{equation}                                                                    \label{7.27.4}
\cQ^{(0)}_p(\alpha,\eta_s(\frz), u_s, k_{\varepsilon})
\leq N{\bar\eta}^2(\frz)|u_s|_{W^m_p}^p, 
\quad
(\cQ^{(1)}_p+\cR_p)(\alpha,\xi_s(\frz), u_s, k_{\varepsilon})
\leq N{\bar\xi}^2(\frz)|u_s|_{W^m_p}^p  
\end{equation}
with a constant $N=N(K_\xi,K_\eta,d,p,\lambda,m)$. 
Thus from \eqref{7.23.3} we obtain that 
or all $\alpha$ with $|\alpha|\leq m$ almost surely 
$$
|D^\alpha u_t^\ep|_{L_p}^p
\leq |u^{(\varepsilon)}_0|^p_{W_p^m}
+N\int_0^t|u_s|_{W_p^m}^p\,ds+m^{(\varepsilon)}_t
\quad
\text{for all $t\in[0,T]$}
$$
with a constant 
$N=N(m,p,d,K,K_\xi,K_\eta,L,\lambda,|\bar\xi|_{L_2(\frZ_1)},|\bar\eta|_{L_2(\frZ_0)})$ 
and the local martingale 
$m^{(\varepsilon)}(\alpha,t)=\zeta_1(\alpha,t)+\zeta(\alpha,t)$. 
Summing over all $|\alpha|\leq m$ gives
\begin{equation}                                                                                            \label{Ctp}
|u_t^\ep|_{W_p^m}^p
\leq |u^{(\varepsilon)}_0|^p_{W_p^m}
+N\int_0^t|u_s|_{W_p^m}^p\,ds+m^{(\varepsilon)}_t
\quad
\text{for all $t\in[0,T]$}
\end{equation}
with (another) constant 
$N=N(m,p,d,K,K_\xi,K_\eta,L,\lambda,|\bar\xi|_{L_2(\frZ_1)},|\bar\eta|_{L_2(\frZ_0)})$ 
and a local martingale, denoted again by $m^\ep$.
For integers $n\geq1$ set 
$\tau_n=\bar\tau_n\wedge\tilde\tau_n$, where 
$(\tilde\tau_n)_{n=1}^{\infty}$ is a localising sequence 
of stopping times for $m^{(\varepsilon)}$ 
and 
$$
\bar\tau_n=\inf\Big\{t\in[0,T]:\int_0^t|u_s|_{W^m_p}^{p}\,ds\geq n\Big\}. 
$$
Then from  \eqref{Ctp}, using also $|D^\alpha u^\ep|_{L_p} 
= |(D^\alpha u)^\ep|_{L_p}\leq |D^\alpha u|_{L_p}$ 
for multi-indices $\alpha\leq m$ and $\varepsilon>0$ we get 
$$
\E |u_{t\wedge\tau_n}^\ep|_{W^m_p}^p
\leq \E|u_0|^p_{W^m_p}+N\int_0^t\E |u_{s\wedge\tau_n}|_{W^m_p}^p\,ds<\infty
\quad
\text{for $t\in[0,T]$ and integers $n\geq1$}.
$$
Applying Fatou's lemma for the limit $\varepsilon\to 0$ followed 
by Gr\"onwall's lemma gives
$$
\E |u_{t\wedge\tau_n}|^p_{W^m_p}\leq N\E|u_0|_{W^m_p}^p
\quad
\text{for $t\in[0,T]$ and integers $n\geq1$}
$$
with a constant 
$N=N(m,p,d,T,K,K_\xi,K_\eta,L,\lambda,|\bar\xi|_{L_2},|\bar\eta|_{L_2})$. 
Letting here $n\to\infty$, by Fatou's lemma we obtain 
\begin{equation}                                                                                           \label{7.27.2}                                                                                         
\sup_{t\in[0,T]}\E|u_t|^p_{W^m_p}\leq N\E|u_0|^p_{W^m_p}. 
\end{equation}

To prove \eqref{supremum estimate wmp} 
we define a localizing sequence of stopping times 
$(\tau^\varepsilon_k)_{k=1}^\infty$ for the local martingale $m^\varepsilon$, 
as well as
$$
\tilde\rho_n
=\inf\Big\{t\in[0,T]:\int_0^t|u_s|_{W^m_p}^{2p}\,ds\geq n\Big\},
\quad\text{and}\quad \rho_{n,k}^\varepsilon
=\bar{\rho}_n\wedge\tilde{\rho}_k^\varepsilon. 
$$
Using the Davis inequality and Lemma \ref{lemma pe5} by standard calculations 
for every $n\geq1$ we get for each $|\alpha|\leq m$ for the Doob-Meyer process of $\zeta_1$,
\begin{equation}
\label{equ dm1}
\E\sup_{t\leq T}|\zeta_1(\alpha,t\wedge\rho_{n,k}^\varepsilon)|
\leq 3\E\Big(\sum_k \int_0^{T\wedge\rho_{n,k}^\varepsilon} 
\big((D^\alpha u^\ep_s)^{p-1},D^\alpha (\cM_s^{k *} u_s)^\ep \big)^2\,ds \Big)^{1/2}
\end{equation}
$$
\leq N\E\Big(\int_0^{T\wedge\rho_{n,k}^\varepsilon} |u_s|_{W^m_p}^{2p}\,ds \Big)^{1/2}<\infty,
$$
and similarly, for each $|\alpha|\leq m$, 
the Doob-Meyer process of $\zeta(\alpha,\cdot)$ is
$$
\langle\zeta (\alpha,\cdot)\rangle(t)=\int_0^t\int_{\frZ_1}|
|D^\alpha (T^{\xi *} u_s)^{(\varepsilon)}|^p_{L_p}
-|D^\alpha u_s^{(\varepsilon)}|^p_{L_p}|^2\nu_1(\frz)ds, \quad t\in[0,T].
$$
Using the Davis inequality and Lemma \ref{lemma dpe4},
\begin{equation}
\label{equ dm2}
\E\sup_{s\leq T}|\zeta(\alpha,s\wedge\rho_{n,k}^\varepsilon)|
\leq 3\E\langle\zeta(\alpha,\cdot)\rangle^{1/2}(T\wedge\rho_{n,k}^\varepsilon)
\leq 
N\E\Big(\int_0^{T\wedge\rho_{n,k}^\varepsilon} |u_s|_{W^m_p}^{2p}\,ds \Big)^{1/2}<\infty,
\end{equation}
with a constant $N=N(m,d,p,K,K_\xi,L,\lambda, |\bar\xi|_{L_2(\frZ_1)})$. 
Thus, due to \eqref{7.27.2} together with \eqref{equ dm1} and \eqref{equ dm2}, 
we get from \eqref{Ctp}, with constant 
$N=N(m,p,d,T,K,K_\xi,K_\eta,L,\lambda,|\bar\xi|_{L_2},|\bar\eta|_{L_2})$,
$$
\E\sup _{t\in[0,T]}
|u^\ep_{t\wedge \rho_{n,k}^\varepsilon}|^p_{W^m_p}
\leq N\E|u_0|^p_{W^m_p} 
+ \sum_{|\alpha|\leq m}
\E\sup_{t\leq T}|\zeta_1(\alpha,t\wedge\rho_{n,k}^\varepsilon)| 
+ \sum_{|\alpha|\leq m}
\E\sup_{t\leq T}|\zeta(\alpha,t\wedge\rho_{n,k}^\varepsilon)|
$$
$$
\leq N\E|u_0|^p_{W^m_p} 
+ N\E\Big(
\int_0^T |u_{s\wedge\rho_{n}}|_{W^m_p}^{2p}\,ds 
\Big)^{1/2}.
$$
Letting here $k\to\infty$ and then $\varepsilon\to 0$, 
we obtain by Fatou's lemma with constants $N'$ 
and $N''$ only depending on $m$, $p$, $d$, $T$, 
$K$, $K_\xi$, $K_\eta$, $L$,  
$\lambda$, $|\bar\xi|_{L_2(\frZ_1)}$ and $|\bar\eta|_{L_2(\frZ_0)}$,
$$
\E\sup _{t\in[0,T]}|u_{t\wedge \rho_n}|^p_{W^m_p}
\leq N\E|u_0|^p_{W^m_p} 
+ N\E\Big(\int_0^T |u_{s\wedge\rho_{n}}|_{W^m_p}^{2p}\,ds \Big)^{1/2}
$$

$$
\leq N\E|u_0|^p_{W^m_p} 
+ N\E\Big(\sup_{t\in [0,T]}
|u_{t\wedge\rho_{n}}|_{W^m_p}^{p}\int_0^T |u_{s\wedge\rho_{n}}|_{W^m_p}^{p}\,ds \Big)^{1/2}
$$
$$
\leq N\E|u_0|^p_{W^m_p} 
+ \tfrac{1}{2}\E\sup_{t\in [0,T]}|u_{t\wedge\rho_{n}}|_{W^m_p}^{p} 
+ N'\E\int_0^T |u_{s\wedge\rho_{n}}|_{W^m_p}^{p}\,ds 
$$
$$
\leq N''\E|u_0|^p_{W^m_p} + \tfrac{1}{2}\E\sup_{t\in [0,T]}|u_{t\wedge\rho_n}|_{W^m_p}^{p},
$$
where we used Young's inequality.
Thus also, we get for all $n$,
$$
\E\sup _{t\in[0,T]}|u_{t\wedge \rho_n}|^p_{W^m_p}\leq 2N''\E|u_0|^p_{W^m_p}.
$$
Using Fatou's lemma we get the desired result.
\end{proof}

The following Lemma \ref{lemma compact} is Lemma 6.4 in \cite{GG2}. 
For integers $m\geq 0$ and real numbers $p\geq 1$ 
we define $\bW_p^m=\bW_p^m(\bR^d)$ to be the space of 
$\cF_T\otimes \cB(\bR^d)$-measurable real valued random variables $\psi$ such that
$$
|\psi|_{\bW^m_p}^p:=\E\sum_{k=0}^m\int_{\bR^d}|D^k\psi(x)|^p\,dx<\infty.
$$
For $p,q\geq 1$ and integers $m\geq 0$ we denote by $\bW_{p,q}^m$ 
the space of $\cO\otimes \cB(\bR^d)$-measurable real valued functions $v=v_t(\omega,x)$ such that 
$$
|v|_{\bW^m_{p,q}}^p:=\E\Big(\int_0^T|v_t|_{W_p^m}^q\,dt\Big)^{p/q} <\infty.
$$
If $m=0$ then we write $\bL_{p,q}:=\bW^0_{p,q}$.
Let $\bB_0^m$ denote the space of those functions 
$\psi\in \bigcap_{p\geq1}\bW^m_p$ such that 
$$
\sum_{k=0}^m\sup_{\omega\in\Omega}\sup_{x\in\bR^d}|D^k\psi(x)|<\infty
\quad
\text{and almost surely $\psi(x)=0$ for $|x|\geq R$,}
$$ 
for some constant $R$ depending on $\psi$.  
It is easy to see that $\bB^m_0$ is a dense 
subspace of $\bW^m_p$ for every $p\in[1,\infty)$. 
For $\varepsilon>0$ let in the following proposition $v^\ep$ denote the convolution
$$
v^\ep(x) = \int_{\bR^d}\chi_\varepsilon(x-y)v(y)\,dy
$$
of a Borel function $v$ on $\bR^d$, where $\chi$ is a smooth, 
symmetric function of unit integral on $\bR^d$, such that $\chi(x)=0$ for $|x|\geq 1$ and
 $\chi_\varepsilon(\cdot):=\varepsilon^{-d}\chi(\cdot/\varepsilon)$. Let
$$
\cM_t^{\varepsilon k} = \rho^{(\varepsilon) ik}_t D_i 
+ B_t^{(\varepsilon)k},\quad k=1,\dots,d',
$$
$$
\tilde\cL_t^{\varepsilon}=a_t^{\varepsilon, ij}D_{ij}
+b_t^{(\varepsilon)i}D_i+
\beta^k_t\cM^{\varepsilon k}_t, 
\quad
\beta_t=B(t,X_t,Y_t),  
$$
$$
a_t^{\varepsilon, ij}
:=\tfrac{1}{2}\sum_k (\sigma_t^{(\varepsilon)ik}\sigma_t^{(\varepsilon)jk}
+\rho_t^{(\varepsilon)ik}\rho_t^{(\varepsilon)jk}), 
\quad i,j=1,2,...,d
$$
and let $I^{\xi^\varepsilon}, J^{\xi^\varepsilon}$ 
and $J^{\eta^\varepsilon}$ be defined as $I^\xi,J^\xi$ and $J^\eta$, 
only with $\xi^\ep$ and $\eta^\ep$ instead of $\xi$ and $\eta$, respectively.

Consider for $\varepsilon\in(0,1)$ the equation 
\begin{align}                                                                       
du_t^{\varepsilon}=&\tilde\cL_t^{\varepsilon\ast}u_t^{\varepsilon}\,dt
+\cM_t^{\varepsilon k\ast}u_t^{\varepsilon}\,dV^k_t
+\int_{\frZ_0}J_t^{\eta^{\varepsilon}\ast}u_t^{\varepsilon}\,\nu_0(d\frz)dt                        \nonumber\\
&+\int_{\frZ_1}J_t^{\xi^{\varepsilon}*}u_t^{\varepsilon}\,\nu_1(d\frz)dt
+\int_{\frZ_1}I_t^{\xi^{\varepsilon}*}u_t^{\varepsilon}\,\tilde N_1(d\frz,dt),
\quad
\text{with $u_0^{\varepsilon}=\psi^{(\varepsilon)}$.}                                                           \label{equdZn}
\end{align}

\begin{proposition}                                                                             \label{proposition compact}
Let Assumptions \ref{assumption SDE},  
\ref{assumption p} and \ref{assumption estimates} 
hold with  $K_1=0$ and let $p\geq 2$ be even.
Assume that the following ``support condition" holds: There is 
some $R>0$ such that 
\begin{equation}                                                                                            \label{supp_condition}
\big(b_t(x),B_t(x),\sigma_t(x),\rho_t(x), \eta_t(x,\frz_0),\xi_t(x,\frz_1)\big)=0
\end{equation}
for $\omega\in\Omega$, $t\geq0$, $\frz_0\in\frZ_0$, $\frz_1\in\frZ_1$ and  
$x\in\bR^{d}$ such that 
$|x|\geq R$.
Let $\psi\in\bB^m_0$ such that almost surely 
$\psi(x)=0$ for $|x|\geq R$. Then there exist $\varepsilon_0>0$ 
and a $\bar{R}=\bar{R}(R,K,K_0,K_\xi,K_\eta)$ 
such that the following statements hold.\newline
(i) For each $\varepsilon\in (0,\varepsilon_0)$ there exists a 
$W^r_p$-solution $u^\varepsilon$ to \eqref{equdZn}, 
for every $r\geq 1$, with initial condition $u_0^\varepsilon=\psi^\ep$ 
and such that 
\begin{equation*}
\E\sup_{t\in [0,T]}|u^\varepsilon_t|_{W^r_p}^p<\infty
\quad
\text{and}\quad u_t^\varepsilon(x)=0
\quad
\text{almost surely for $|x|\geq \bar{R}$ and $t\in [0,T]$.}
\end{equation*}
(ii) There exists a unique $L_p$-solution $u$ to \eqref{equdZ} 
(with non-smoothed coefficients) such that almost surely 
$u_t(x)=0$ for $dx$-almost every  
$x\in\{x\in\bR^d:|x|\geq \bar R\}$ for every $t\in[0,T]$  and 
\begin{equation*}                                                                                                   
\E\sup_{t\in [0,T]}|u_t|_{L_p}^p\leq N\E|\psi|_{L_p}^p
\end{equation*}
with a constant 
$N=N(d,p,T,K, K_{\xi}, K_{\eta}, L,\lambda,|\bar{\xi}|_{L_2},|\bar{\eta}|_{L_2})$.
\newline
(iii) There exists a sequence $(\varepsilon_n)_{n=1}^\infty$, $\varepsilon\to 0$, such that 
$$
u^{\varepsilon_n}\to u
\quad 
\text{weakly in $\bL_{p,q}$ as $n\to\infty$, for every integer $q\geq 2$.}
$$
\end{proposition}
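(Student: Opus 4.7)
The plan is to construct the smoothed densities $u^\varepsilon$ using the $L_p$-theory of the filtering density (Theorem \ref{theorem Lp}) applied to the mollified model, bootstrap them to arbitrarily high Sobolev regularity via Lemma \ref{lemma Ito Wmp}, and then extract a weak $\bL_{p,q}$-limit along a subsequence, identifying it with the original $L_p$-solution by the uniqueness part of Theorem \ref{theorem Lp}. The preliminary step is to check that the mollified coefficients $b^\ep,B^\ep,\sigma^\ep,\rho^\ep,\eta^\ep,\xi^\ep$ satisfy Assumptions \ref{assumption SDE}, \ref{assumption p} and \ref{assumption estimates} with the \emph{same} constants $K_0,K,K_\xi,K_\eta,L,\lambda$ and the same majorants $\bar\eta,\bar\xi$ (convolution acts only in $x$ and therefore preserves sup-norms and Lipschitz constants in $x$ while leaving $\bar\eta,\bar\xi$ untouched), and additionally satisfy Assumption \ref{assumption SDE2} for every $m\geq 0$ with $\varepsilon$-dependent bounds. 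Taking $\varepsilon_0:=1$, the mollified coefficients vanish for $|x|\geq R+1$.

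For \textbf{(i)}, Theorem \ref{theorem Lp}(i) applied to the smoothed filtering model yields an $L_p$-solution $u^\varepsilon$ of \eqref{equdZn} with $u^\varepsilon_0=\psi^\ep$ and $\E\sup_{t\in[0,T]}|u^\varepsilon_t|_{L_p}^p\leq N\E|\psi|_{L_p}^p$, the constant $N$ depending only on parameters preserved by mollification and hence \emph{independent of} $\varepsilon$. Since $\psi\in\bB^m_0\subset\bigcap_{r\geq 1}\bW^r_p$, applying Lemma \ref{lemma Ito Wmp} at order $r$ (legitimate because the mollified coefficients satisfy Assumption \ref{assumption SDE2} for every $m$) gives $\E\sup_{t\in[0,T]}|u^\varepsilon_t|_{W^r_p}^p<\infty$, which upgrades $u^\varepsilon$ to a $W^r_p$-solution for every $r\geq 1$. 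The support property follows from a pathwise analysis of the smoothed signal $X^\varepsilon$: with $|X_0|\leq R$ a.s.\ and using $K_1=0$ together with $\bar\eta\leq K_\eta,\bar\xi\leq K_\xi$, every jump of $X^\varepsilon$ is bounded by $K_0(K_\xi+K_\eta)$; since all smoothed drift, diffusion and jump coefficients vanish outside $\{|x|\leq R+\varepsilon\}$, the process freezes once it leaves this set, yielding $X^\varepsilon_t\in\{|x|\leq\bar R\}$ almost surely with $\bar R:=R+1+K_0(K_\xi+K_\eta)$. As $u^\varepsilon_t\,dx$ is proportional (via the ${}^o\!\gamma^\varepsilon_t$-normalization) to the conditional law of $X^\varepsilon_t$, the same support follows for $u^\varepsilon_t$.

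Part \textbf{(ii)} is then obtained by running the same two arguments with the unmollified coefficients: Theorem \ref{theorem Lp}(i) supplies existence, the $L_p$-estimate and uniqueness within the class \eqref{condition u}, while the signal argument confines $u_t$ to $\{|x|\leq\bar R\}$; note that any compactly supported $L_p$-function automatically lies in the class \eqref{condition u}, so the uniqueness in (ii) is inherited from Theorem \ref{theorem Lp}. For \textbf{(iii)}, the uniform estimate together with the common support implies that $(u^\varepsilon)$ is bounded in the reflexive space $\bL_{p,q}$ for every integer $q\geq 2$, so along some subsequence $u^{\varepsilon_n}\rightharpoonup v$. I pass to the limit in the weak formulation \eqref{equZ} for $u^{\varepsilon_n}$ tested against an arbitrary $\vp\in C_0^\infty$: multiplying by a bounded $\cF_T$-measurable random variable and taking expectations eliminates the martingale terms, while the Lebesgue and $\nu_i$-compensator terms converge because $\tilde\cL^{\varepsilon_n}\vp$, $\cM^{\varepsilon_n}\vp$, $J^{\eta^{\varepsilon_n}}\vp$ and $J^{\xi^{\varepsilon_n}}\vp$ converge uniformly on the compact set $\{|x|\leq\bar R\}$ to their non-smoothed counterparts (the coefficients are Lipschitz in $x$ and compactly supported, so mollification converges uniformly on compacta). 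The limit $v$ is thus an $L_p$-solution of \eqref{equdZ} with support in $\{|x|\leq\bar R\}$, and by the uniqueness established in (ii) one has $v=u$, which pins down the full subsequential convergence.

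The main obstacle is the passage to the limit in the compensated-Poisson stochastic integral $\int_0^t\int_{\frZ_1}(u^{\varepsilon_n}_{s-},I^{\xi^{\varepsilon_n}}_s\vp)\tilde N_1(d\frz,ds)$: weak convergence in $\bL_{p,q}$ alone is not enough to identify its limit pointwise in $\omega$. It is handled by isolating this integral, bounding its $L_2(\Omega)$-norm via the It\^o isometry, and using the Lipschitz bound $|\xi^{\varepsilon_n}(t,x,y,\frz)-\xi(t,x,y,\frz)|\leq C\bar\xi(\frz)$ combined with the uniform convergence $\xi^{\varepsilon_n}\to\xi$ on $\{|x|\leq\bar R\}$ and the uniform $L_p$-bound on $u^{\varepsilon_n}$, to conclude that the difference between the integral for $u^{\varepsilon_n}$ and the corresponding integral for $u$ tends to zero in $L_2(\Omega)$.
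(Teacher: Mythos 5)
The paper does not give a self-contained proof here: it cites Lemma 6.4 of \cite{GG2}. Your attempt to reconstruct the argument from the tools of the present paper has several genuine gaps. First, Theorem \ref{theorem Lp} cannot supply the existence of $u^\varepsilon$ (nor of $u$ in part (ii)) with initial condition $\psi^{(\varepsilon)}$ for an arbitrary $\psi\in\bB^m_0$: that theorem only produces the unnormalised filtering density, i.e.\ the solution of the Zakai equation with the specific initial condition $\pi_0=P(X_0\in dx|\cF^Y_0)/dx$, which is a probability density determined by the model, whereas here $\psi$ is an arbitrary (possibly signed, non-normalised) random function. Relatedly, \eqref{equdZn} is not the Zakai equation of a ``smoothed filtering model'': the multiplier $\beta_t=B(t,X_t,Y_t)$, the driving noises $V,\tilde N_1$ and the initial datum are those of the original problem, so the identification of $u^\varepsilon_t\,dx$ with (a multiple of) the conditional law of a smoothed signal $X^\varepsilon_t$, on which both your support argument and your existence argument rest, is unjustified. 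Second, the bootstrap ``Lemma \ref{lemma Ito Wmp} at order $r$ upgrades $u^\varepsilon$ to a $W^r_p$-solution'' is circular: that lemma is an a priori estimate whose hypothesis is that $u$ already \emph{is} a $W^r_p$-solution (a $W^r_p$-valued weakly cadlag process solving the equation); it does not produce the $W^r_p$-regularity. What is actually needed, and what Lemma 6.4 of \cite{GG2} provides, is an independent solvability result for the SPDE \eqref{equdZn} with smooth, compactly supported coefficients and arbitrary initial data in high-order Sobolev spaces, from which the support property is also derived without a filtering interpretation.

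Third, the limit passage in (iii) is flawed as written. Multiplying the weak formulation by a bounded $\cF_T$-measurable random variable and taking expectations does \emph{not} annihilate the stochastic-integral terms (that would require $\cF_0$-measurable weights), so the scheme you describe would lose the martingale part of the limit equation rather than identify it. Your proposed repair via the It\^o isometry for the compensated Poisson integral requires \emph{strong} convergence of $u^{\varepsilon_n}$ to $u$ in an $L_2(\Omega\times[0,T])$-type norm, which weak convergence in $\bL_{p,q}$ does not give. The standard route is to use that each stochastic integral is a bounded linear (hence weakly continuous) operator of $u$, combined with the strong convergence of the mollified coefficients on the common compact support and the uniform bounds, to pass to the limit term by term in the weak topology, and only then invoke uniqueness of the $L_2$-solution in the class \eqref{condition u} (your observation that compact support guarantees \eqref{condition u} is correct). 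As it stands, the existence step, the regularity bootstrap and the identification of the limit all need to be replaced, which is precisely the content of the cited Lemma 6.4 in \cite{GG2}.
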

\begin{proof}
See Lemma 6.4 in \cite{GG2}.
\end{proof}
\begin{lemma}
\label{lemma compact}
Let Assumptions \ref{assumption SDE},  \ref{assumption p}, 
\ref{assumption SDE2}  and \ref{assumption estimates} 
hold with  $K_1=0$. Consider integers $m\geq 0$ and $p\geq 2$ even. 
Let moreover the support condition \eqref{supp_condition} 
of Proposition \ref{proposition compact} hold for some $R>0$.
Then there exists a unique $W^m_p$-solution $(u_t)_{t\in [0,T]}$ 
to equation \eqref{equdZ} with initial condition $u_0=\psi$. 
Moreover, almost surely $u_t(x)=0$ for $dx$-almost every  
$x\in\{x\in\bR^d:|x|\geq \bar R\}$ for every $t\in[0,T]$ 
for a constant $\bar{R}=\bar{R}(R,K,K_0, K_{\xi}, K_{\eta})$, and 
\begin{equation}                                                                                                   \label{E sup Wmp compact}
\E\sup_{t\in [0,T]}|u_t|_{W^m_p}^p\leq N\E|\psi|_{W^m_p}^p
\end{equation}
with a constant 
$N=N(m,d,p,T,K, K_{\xi}, K_{\eta}, L,\lambda,|\bar{\xi}|_{L_2},|\bar{\eta}|_{L_2})$.
\end{lemma}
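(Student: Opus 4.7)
The plan is to use the smoothed approximations $(u^{\varepsilon_n})$ from Proposition \ref{proposition compact} to transport the a priori $W^m_p$-estimate of Lemma \ref{lemma Ito Wmp} to the limiting $L_p$-solution $u$ in an $\esssup$-sense, then to use Lemma \ref{lemma weakly cadlag} to promote $u$ to a $W^m_p$-valued weakly cadlag process, and finally to reapply Lemma \ref{lemma Ito Wmp} to $u$ itself to obtain the pathwise $\sup$-bound. Uniqueness will follow from Proposition \ref{proposition compact}(ii).

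First, I would check that Lemma \ref{lemma Ito Wmp} applies to each $u^{\varepsilon_n}$ with a constant independent of $n$. The smoothed coefficients of \eqref{equdZn} inherit Assumptions \ref{assumption SDE}, \ref{assumption p}, \ref{assumption SDE2} and \ref{assumption estimates} uniformly in $\varepsilon_n$; under $K_1=0$ the Lipschitz, growth and derivative bounds pass through mollification verbatim, and the diffeomorphism condition is preserved as handled in Proposition \ref{proposition compact}. Since $\E|\psi^{(\varepsilon_n)}|_{W^m_p}^p\leq\E|\psi|_{W^m_p}^p<\infty$ and the uniform compact support $\{|x|\leq\bar R\}$ together with the $L_p$-bound of Proposition \ref{proposition compact}(i) supplies the $L_1$-control required by Lemma \ref{lemma Ito Wmp}, the lemma yields
\begin{equation*}
\E\sup_{t\in [0,T]}|u_t^{\varepsilon_n}|_{W^m_p}^p\leq N\E|\psi|_{W^m_p}^p
\end{equation*}
uniformly in $n$. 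For each integer $q\geq 2$ this implies $|u^{\varepsilon_n}|_{\bW_{p,q}^m}^p\leq T^{p/q}N\E|\psi|_{W^m_p}^p$; since $\bW_{p,q}^m$ is reflexive and the weak $\bL_{p,q}$-limit of $u^{\varepsilon_n}$ is $u$, uniqueness of weak limits combined with weak lower semicontinuity of the norm places $u$ in $\bW_{p,q}^m$ with the same bound. Sending $q\to\infty$ and combining Fatou's lemma with the standard relation $\lim_{q\to\infty}|f|_{L_q([0,T])}=\esssup_{t\in [0,T]}|f(t)|$ delivers
\begin{equation*}
\E\esssup_{t\in [0,T]}|u_t|_{W^m_p}^p\leq N\E|\psi|_{W^m_p}^p,
\end{equation*}
so $\esssup_t|u_t|_{W^m_p}<\infty$ almost surely.

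To activate Lemma \ref{lemma weakly cadlag} I still need $u_T\in W^m_p$ almost surely. Since $T$ is almost surely not a jump time of $N_1$, $u_T=u_{T-}$ almost surely; picking a sequence $t_n\uparrow T$ from an $\omega$-dependent dense set of times where $|u_{t_n}|_{W^m_p}$ stays bounded, weak compactness in $W^m_p$ combined with uniqueness of the $L_p$-weak limit along the sequence identifies $u_{T-}$ as a $W^m_p$-element. Lemma \ref{lemma weakly cadlag} then upgrades $u$ to a $W^m_p$-valued weakly cadlag process, hence a $W^m_p$-solution of \eqref{equdZ}. With $u$ now a $W^m_p$-solution of \eqref{equdZ} with the original coefficients, and the $L_1$-bound still available from compact support, Lemma \ref{lemma Ito Wmp} applied directly to $u$ produces the claimed estimate $\E\sup_{t\in [0,T]}|u_t|_{W^m_p}^p\leq N\E|\psi|_{W^m_p}^p$. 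Uniqueness follows because any $W^m_p$-solution is in particular an $L_p$-solution whose support propagates from $\{|x|\leq R\}$ to $\{|x|\leq\bar R\}$ by the local nature of the equation under \eqref{supp_condition}, so it must coincide with the unique compactly supported solution of Proposition \ref{proposition compact}(ii). The main subtlety lies in the passage from the $\esssup$-bound, which is the best that a direct weak-compactness argument yields, to a genuine pathwise $\sup$-bound; this is resolved by using the $\esssup$-bound only to unlock Lemma \ref{lemma weakly cadlag} and then closing the loop with a second application of Lemma \ref{lemma Ito Wmp}.
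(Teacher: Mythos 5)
Your proposal is correct and follows the paper's own skeleton: uniform $W^m_p$-bounds for the mollified solutions $u^{\varepsilon_n}$ via Lemma \ref{lemma Ito Wmp} (with constants independent of $\varepsilon$, which indeed requires the uniform preservation of Assumption \ref{assumption estimates}(i) under mollification, handled as in Proposition \ref{proposition compact}), transfer of the bound to the weak limit $u$ in $\bW^m_{p,q}$, the limit $q\to\infty$ to get the $\esssup$-estimate, and Lemma \ref{lemma weakly cadlag} to upgrade $u$ to a $W^m_p$-valued weakly cadlag process. You deviate from the paper only in two closing steps, and both variants are sound. First, to secure $u_T\in W^m_p$ the paper extracts a subsequence with $u^{\varepsilon_n}_T\to u_T$ weakly in $\bW^m_p(\cF_T)$, while you instead use that $N_1$ has no atoms at the fixed time $T$, so $u_T=u_{T-}$ almost surely, and identify $u_{T-}$ as a $W^m_p$-element by weak compactness along $t_n\uparrow T$; this is the same compactness argument already used inside Lemma \ref{lemma weakly cadlag} and works. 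Second, for the passage from $\esssup$ to $\sup$ the paper simply notes that weak cadlagness (together with the bound on $\E|u_T|^p_{W^m_p}$) lets one replace the essential supremum by the supremum, whereas you close the loop by reapplying Lemma \ref{lemma Ito Wmp} directly to $u$, whose hypotheses ($W^m_p$-solution, $\E|u_0|^p_{W^m_p}<\infty$, a.s. finite $\esssup_t|u_t|_{L_1}$ from the compact support given by Proposition \ref{proposition compact}(ii), and $K_1=0$) are all available at that point; this buys the pathwise bound at the cost of one extra invocation of the It\^o-type estimate. Your uniqueness remark is stated a bit loosely (the equation is nonlocal, so "locality" should be replaced by the observation that any $W^m_p$-solution is in particular an $L_p$-solution, to which the uniqueness statement of Proposition \ref{proposition compact}(ii) applies), but this matches the level of detail in the paper, which also defers uniqueness to the $L_p$-theory.
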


\begin{proof} By Proposition \ref{proposition compact} 
(i) for $\varepsilon>0$ sufficiently small there exists a $W^m_p$-valued 
weakly cadlag $\cF_t$-adapted process $(u^\varepsilon_t)_{t\in [0,T]}$, such that 
for each $\varphi\in C_0^{\infty}$ almost surely 
\begin{align}                                                                       
(u_t^{\varepsilon},\varphi)=&(\psi^{(\varepsilon)},\varphi)+
\int_0^t(u_s^{\varepsilon},\tilde\cL_s^{\varepsilon}\varphi)\,ds
+\int_0^t(u_s^{\varepsilon},\cM_s^{\varepsilon k}\varphi)\,dV^k_s
+\int_0^t\int_{\frZ_0}
(u_s^{\varepsilon},J_s^{\eta^{\varepsilon}}\varphi)\,\nu_0(d\frz)\,ds                        \nonumber\\
&+\int_0^t\int_{\frZ_1}
(u_s^{\varepsilon},J_s^{\xi^{\varepsilon}}\varphi)\,\nu_1(d\frz)\,ds
+\int_0^t\int_{\frZ_1}
(u_s^{\varepsilon},I_s^{\xi^{\varepsilon}}\varphi)\,\tilde N_1(d\frz,ds),   \label{equdZ*}
\end{align}
holds for all $t\in[0,T]$. By Proposition \ref{proposition compact} (ii), 
since almost surely $u^\varepsilon_t=0$ for $|x|\geq \bar{R}$ 
for all $t\in [0,T]$ for a constant $\bar{R}
=\bar{R}(R,K,K_0, K_{\xi}, K_{\eta})$, we also have
$$
\E\sup_{t\in [0,T]}|u_t^\varepsilon|_{L_1}
\leq \bar{R}^{d/q} \E\sup_{t\in [0,T]}|u_t^\varepsilon|_{L_p}^p
$$
for $q=p/(p-1)$. Next, note that the smoothed coefficients 
$b^\ep,B^\ep,\sigma^\ep,\rho^\ep,\xi^\ep$ and $\eta^\ep$ satisfy 
Assumptions \ref{assumption SDE}, \ref{assumption p},  
\ref{assumption SDE2} and Assumption \ref{assumption estimates} 
(ii) \& (iii) with the same constants $K_0,L,K_\xi$ and $K_\eta$, 
independent of $\varepsilon$.
By Remark \ref{remark diffeom} (i) we have that 
for all $t\in [0,T],\theta\in [0,1],y\in\bR^{d'}$ and $\frz_i\in\frZ_i$, $i=0,1$, the mappings
$$
\tau^{\eta}_{t,\theta,\frz_0}(x) 
= x+\theta\eta^\ep_t(x,\frz_0),\quad\text{and}\quad \tau^{\xi}_{t,\theta,\frz_1}(x) 
= x+\theta\xi^\ep_t(x,\frz_1)
$$
are $C^1$-diffeomorphisms.
Moreover, by Lemma 6.2 in \cite{GG2}, 
we know that for $\varepsilon$ sufficiently small 
we have that for all $t\in [0,T],\theta\in [0,1]$ and $\frz_i\in\frZ_i$, $i=0,1$, 
the mappings
$$
(\tau_{t,\theta,\frz_0}^\eta)^\ep=\tau^{\eta^\ep}_{t,\theta,\frz_0}(x) 
= x+\theta\eta^\ep_t(x,\frz_0)\quad\text{and}\quad 
(\xi_{t,\theta,\frz_1}^\xi)^\ep=\tau^{\xi^\ep}_{t,\theta,\frz_1}(x) = x+\theta\xi^\ep_t(x,\frz_1)
$$
remain $C^1$-diffeomorphisms such that 
$$
|\det D\tau^{\eta^\ep}_{t,\theta,\frz_0}(x)| \geq \lambda' \quad\text{and}
\quad |\det D\tau^{\xi^\ep}_{t,\theta,\frz_1}(x)| \geq \lambda',
$$
with a $\lambda'=\lambda'(\lambda,K_\xi,K_\eta,K_0)$ 
independent of $\varepsilon$. By Remark \ref{remark diffeom} 
(ii) we then know that Assumption \ref{assumption estimates} (i) is satisfied with (another) $\lambda''=\lambda''(\lambda,K_\xi,K_\eta,K_0)$ independent of $\varepsilon$.
Hence by Lemma \ref{lemma Ito Wmp} for each $\varepsilon>0$ also
\begin{equation}
\label{equ E Wqp u eps}
\E|u^{\varepsilon}_T|_{W^m_p}^p
+
\E\Big(\int_0^T|u_t^{\varepsilon}|_{W^m_p}^{r}\,dt \Big)^{p/r}
\leq \E|u^{\varepsilon}_T|_{W^m_p}^p+T^{p/r}\E\sup_{t\in[0,T]}
|u_t^{\varepsilon}|_{W^m_p}^p\leq N\E|\psi|_{W^m_p}^p
\end{equation}
for a constant 
$N=N(m,d,p,K,K_\eta,K_\xi,L,T,\lambda, |\bar\xi|_{L_2(\frZ_1)},|\bar\eta_{L_2(\frZ_1)})$ 
independent of $\varepsilon$ 
for all integers $r\geq 1$. Letting $(\varepsilon_n)_{n=1}^\infty$ 
be the sequence from Proposition \ref{proposition compact} (iii), 
we know that 
$$
u_T^{\varepsilon_n}\to u_T\quad\text{weakly in 
$\bL_p(\cF_T)$ and}\quad u^{\varepsilon_n}\to u
\quad\text{weakly in $\bL_{p,r}$ for integers $r\geq 1$ as $n\to\infty$}
$$
where $u$ is the unique $L_p$-solution to \eqref{equdZ} and, 
if necessary by passing to a subsequence,
$$
u^{\varepsilon_n}_T\to u_T\quad\text{weakly in $\bW^m_p(\cF_T)$ and}
\quad
u^{\varepsilon_n}\to u\quad\text{weakly in $\bW^m_{p,r}$ for integers $r\geq 1$.}
$$
Letting $r\to\infty$ in \eqref{equ E Wqp u eps} yields
$$
\E|u_T|_{W^m_p}^p+\E\esssup_{t\in [0,T]}|u_t|_{W^m_p}^p<N\E|\psi|_{W^m_p}^p.
$$
By Lemma \ref{lemma weakly cadlag} $u$ 
is weakly cadlag as $W^m_p$-valued process. 
Thus we can replace the essential supremum above 
by the supremum to obtain \eqref{E sup Wmp compact}. 
By Proposition \ref{proposition compact} (ii) we also have 
that almost surely $u_t(x)=0$ for $dx$-almost every  
$x\in\{x\in\bR^d:|x|\geq \bar R\}$ for every $t\in[0,T]$ 
for a constant $\bar{R}=\bar{R}(R,K,K_0, K_{\xi}, K_{\eta})$.
This finishes the proof.
\end{proof}
\begin{corollary}                                                                               \label{corollary 1.3.4.22}
Let Assumptions  \ref{assumption SDE}, 
\ref{assumption p}, \ref{assumption estimates} and  \ref{assumption SDE2}
hold with an integer $m\geq 0$. 
Assume, moreover that  the support condition \eqref{supp_condition}
holds for some $R>0$. 
Then for every $p\geq2$ there is a linear operator $\bS$ defined 
on $\bW^m_p$ 
such that $\bS\psi$ admits a $P\otimes dt$-modification 
$u=(u_t)_{t\in[0,T]}$ which is a $W^m_p$-solution 
to equation \eqref{equdZ} for every $\psi\in\bW^m_p$, with initial condition $u_0=\psi$,  
and 
\begin{equation}                                                                                                   \label{30.9.21.5}
\E\sup_{t\in [0,T]}|u_t|_{W^m_p}^p\leq N\E|\psi|_{W^m_p}^p
\end{equation}
with a constant 
$N=
N(m,d,p,T,K, K_{\xi}, K_{\eta}, L,\lambda,|\bar{\xi}|_{L_2},|\bar{\eta}|_{L_2})$. 
Moreover, if $\psi\in\bW^m_p$ such that almost surely $\psi(x)=0$ for $|x|\geq R$,  
then almost surely $u_t(x)=0$ for $|x|\geq \bar R$ for $t\in[0,T]$ for a constant 
$\bar{R}=\bar{R}(R,K,K_0, K_{\xi}, K_{\eta})$.
\end{corollary}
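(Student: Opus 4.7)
The plan is to extend Lemma \ref{lemma compact}, which produces a $W^m_p$-solution for initial data in the dense subspace $\bB^m_0\subset\bW^m_p$, to arbitrary data in $\bW^m_p$ by a density argument that exploits the linearity of \eqref{equdZ} in the unknown $u$ and the a priori estimate \eqref{E sup Wmp compact} from that same lemma.

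First, I would define $\bS$ on $\bB^m_0$ by setting $\bS\psi$ equal to the unique $W^m_p$-solution supplied by Lemma \ref{lemma compact}; uniqueness in Lemma \ref{lemma compact} together with the linearity of the coefficients of equation \eqref{equdZ} in $u$ guarantees that this map is linear on $\bB^m_0$. The estimate \eqref{E sup Wmp compact} then shows that $\bS$ is a bounded linear operator from $\bB^m_0$, carrying the $\bW^m_p$ norm, into the Banach space $\bV^m_p$ of $\cF_t$-optional, $W^m_p$-valued processes $v=(v_t)_{t\in[0,T]}$ equipped with the norm $\|v\|:=(\E\sup_{t\in[0,T]}|v_t|_{W^m_p}^p)^{1/p}$, identified modulo indistinguishability. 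Since $\bB^m_0$ is dense in $\bW^m_p$, the operator $\bS$ extends uniquely to a bounded linear operator from $\bW^m_p$ into $\bV^m_p$, still satisfying \eqref{30.9.21.5}.

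Second, I would verify that for general $\psi\in\bW^m_p$ the process $u:=\bS\psi$ is a $W^m_p$-solution to \eqref{equdZ} with initial condition $\psi$. Choose $\psi_n\in\bB^m_0$ with $\psi_n\to\psi$ in $\bW^m_p$; the linearity of $\bS$ and \eqref{E sup Wmp compact} applied to $\bS(\psi_n-\psi_k)$ show that $u^n:=\bS\psi_n$ is Cauchy in $\bV^m_p$, hence converges to $u$. Each $u^n$ satisfies \eqref{equZ} for every $\varphi\in C_0^\infty$, and passing to the limit in the deterministic integrals is straightforward via the uniform $W^m_p$-bound and the Sobolev estimates of Section \ref{sec estimates}. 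For the stochastic integrals I would use It\^o's isometry (resp.\ the Burkholder--Davis--Gundy inequality for the Poisson term), combined with the same Sobolev estimates, to control the differences in $L_p(\Omega)$. Once \eqref{equZ} is established for $u$ pointwise in $\varphi$ and $t$, an outside-$P$-null-set argument using the separability of $C_0^\infty$ (in a suitable topology) and the almost-sure weak cadlag property in $L_p$ (inherited from Theorem \ref{theorem Lp}) yields a single exceptional set. Finally, the almost-sure bound $\esssup_{t\in[0,T]}|u_t|_{W^m_p}<\infty$ coming from $\|u\|_{\bV^m_p}<\infty$, together with $u_T\in W^m_p$, allows me to invoke Lemma \ref{lemma weakly cadlag} to upgrade the weak cadlag property from $L_p$ to $W^m_p$.

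Third, for the support statement, when $\psi$ vanishes for $|x|\geq R$ I would approximate $\psi$ in $\bW^m_p$ by a sequence $\psi_n\in\bB^m_0$ whose supports remain in $\{|x|\leq R\}$, achieved by multiplying $\psi$ by a smooth cutoff equal to one on $\{|x|\leq R-2/n\}$ and vanishing outside $\{|x|\leq R-1/n\}$ and then mollifying with a kernel of width less than $1/n$. Lemma \ref{lemma compact} then gives that each $u^n=\bS\psi_n$ vanishes for $|x|\geq\bar R$ with the \emph{same} $\bar R=\bar R(R,K,K_0,K_\xi,K_\eta)$, and this property is preserved by the $\bV^m_p$-limit.

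The main obstacle I expect is the upgrade of the weak cadlag property from the $L_p$-level (already known from Theorem \ref{theorem Lp} and stable under the limit) to the $W^m_p$-level, uniformly in $\omega$ on a single null set; Lemma \ref{lemma weakly cadlag} is tailored precisely for this, but its hypothesis $v_T\in W^m_p$ has to be extracted from \eqref{30.9.21.5}, which only gives almost-sure finiteness of $\sup_t|u_t|_{W^m_p}$. Everything else is routine once the a priori estimate \eqref{E sup Wmp compact} is in hand.
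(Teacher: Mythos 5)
There is a genuine gap: your entire construction rests on applying Lemma \ref{lemma compact} (and its estimate \eqref{E sup Wmp compact}) for arbitrary $p\geq 2$, but that lemma -- like all the Sobolev estimates of Section \ref{sec estimates} and the It\^o-formula argument of Lemma \ref{lemma Ito Wmp} on which it rests -- is stated and proved only for \emph{even} integers $p\geq 2$ (the proofs expand $|D^\alpha u^{(\varepsilon)}|^p$ as a $p$-fold product of integrals, which requires $p$ even). The corollary, however, claims the result for every real $p\geq 2$. For non-even $p$ you therefore have neither the existence of $W^m_p$-solutions for data in $\bB^m_0$ nor the a priori bound needed to show that $\bS(\psi_n-\psi_k)$ is Cauchy in your space $\bV^m_p$, so the density extension cannot even get started. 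The paper closes exactly this gap by taking the solution operators $\bS,\bS_T$ already constructed on $\bL_p$ in \cite{GG2}, applying Lemma \ref{lemma compact} for the two neighbouring even exponents $p_0\leq p\leq p_1$ to obtain \eqref{equ norm Wmp_i}, and then invoking a Riesz--Thorin type interpolation to get the $\bW^m_p$-bound \eqref{equ norm Wmp p} for the intermediate $p$; only after that does the approximation $\psi^n\to\psi$ with $\psi^n\in\bB^m_0$ enter, and the limit is handled by weak compactness in $\bW^m_{p,r}$ together with Lemma \ref{lemma weakly cadlag}, with no need to pass to the limit in the stochastic integrals.

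Two smaller points. First, the paper does not define $\bS$ by extending from $\bB^m_0$: $\bS$ (and $\bS_T$) are inherited from Corollary 6.5 of \cite{GG2} as operators on $\bL_p$, and the corollary only upgrades the regularity of $\bS\psi$; this sidesteps your concerns about re-deriving equation \eqref{equZ} in the limit and about identifying $u_T$, since $u_T=\bS_T\psi$ is controlled separately (your worry that \eqref{30.9.21.5} only yields an essential supremum is resolved in the paper precisely through this separate bound on $u_T$ before Lemma \ref{lemma weakly cadlag} is applied). Second, your appeal to Theorem \ref{theorem Lp} for the weak cadlag property in $L_p$ is not applicable for general $\psi\in\bW^m_p$ (that theorem concerns the filtering density with initial data a conditional density); for even $p$ your strong convergence in $\E\sup_t|\cdot|_{L_p}^p$ would in fact give the cadlag property directly, but for general $p\geq 2$ you must, as the paper does, rely on the $L_p$-solution theory of \cite{GG2}.
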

\begin{proof}
By Corollary 6.5 in \cite{GG2} we know that there exist 
linear operators $\bS$ and $\bS_T$ on $\bL_p$ 
such that $\bS\psi$ admits a $P\otimes dt$-modification 
$u=(u_t)_{t\in [0,T]}$ that is an $L_p$-solution to \eqref{equdZ} 
such that $u_T=\bS_T\psi$ satisfies equation \eqref{equdZ} 
for each $\vp\in C_0^\infty$ almost surely with $u_T$ 
in place of $u_t$ and $t:=T$. By an abuse of notation 
we refer to this stochastic modification $u$ whenever 
we write $\bS\psi$ in the following. It remains 
to show that if $\psi\in\bW^m_p$, then $u$  is 
in particular a $W^m_p$-solution to \eqref{equdZ}, 
i.e. it is weakly cadlag as $W^m_p$-valued process.\newline
If $p$ is an even integer, then this follows from 
Lemma \ref{lemma compact}. 
Assume $p$ is not an even integer. Then let $p_0$ be 
the greatest even integer such that $p_0\leq p$ and let $p_1$ 
be the smallest even integer such that $p\leq p_1$. 
By Lemma \ref{lemma compact}, in particular \eqref{E sup Wmp compact}, 
we get that 
\begin{equation}
\label{equ norm Wmp_i}
|\bS_T\psi|_{\bW^m_{p_i}}+|\bS\psi|_{\bW^m_{p_i,r}}
\leq N_i|\psi|_{\bW^m_{p_i}}\quad\text{for $i=0,1$}
\end{equation}
for every $r\in[1,\infty)$ and constants $N_i=
N_i(m,d,p_i,T,K, K_{\xi}, K_{\eta}, L,\lambda,|\bar{\xi}|_{L_2},|\bar{\eta}|_{L_2})$, 
$i=0,1$, independent of $r$.
Hence, by a well-known generalization of the Riesz-Thorin interpolation theorem
we also get for all $r\geq 1$,
\begin{equation}
\label{equ norm Wmp p}
|\bS_T\psi|_{\bW^m_{p}}+|\bS\psi|_{\bW^m_{p,r}}
\leq N|\psi|_{\bW^m_{p}}\quad\text{for $i=0,1$},
\end{equation}
for (another) constant $N=
N(m,d,p,T,K, K_{\xi}, K_{\eta}, L,\lambda,|\bar{\xi}|_{L_2},|\bar{\eta}|_{L_2})$. 
Consider a sequence $(\psi^n)_{n=1}^\infty\subset \bB^m_0$ 
such that $\psi^n\to\psi$ in $\bW^m_p$. 
For each $n$, $u^n=\bS\psi^n$  is the unique 
$W^m_{p_i}$-solution to \eqref{equdZ}, $i=0,1$, 
with initial condition $\psi^n$.
By virtue of \eqref{equ norm Wmp p}, 
using that $|\psi^n-\psi|_{\bW^m_p}\to 0$, as $n\to\infty$  
we know that also
$$
u^n\to u\quad\text{weakly in $\bW^m_{p,r}$ for every integer $r\geq 1$ and}
\quad 
u_T^n\to  u_T
\quad
\text{weakly in $\bW^m_p(\cF_T)$,}
$$
where $u = \bS\psi$ is the unique $L_p$-solution introduced 
in the beginning of the proof, satisfying \eqref{equ norm Wmp p}. 
To see that $u$ is  weakly cadlag as $W^m_p$-valued process, 
note that by letting $r\to\infty$ in \eqref{equ norm Wmp p} 
or $\bS\psi=u$ and $\bS_T\psi = u_T$ yields
$$
\E|u_T|_{W_p^m}^p +   \E\esssup_{t\in [0,T]}|u_t|_{W^m_p}^p\leq N\E|\psi|_{W^m_p}^p,
$$
for (another) constant $N=
N(m,d,p,T,K, K_{\xi}, K_{\eta}, L,\lambda,|\bar{\xi}|_{L_2},|\bar{\eta}|_{L_2})$.
By Lemma \ref{lemma weakly cadlag} we then know that $u$ 
is weakly cadlag as $W^m_p$-valued process. 
Thus we can replace the essential supremum above with the supremum, 
to obtain \eqref{30.9.21.5}. To prove the claim 
about the support of $u$, note that if $\psi(x)=0$ for $|x|\geq R$, 
for a constant $R$, and $\psi^n\to\psi$ in $\bW^m_p$, 
then for sufficiently large $n$ we have $\psi^n(x)=0$ for $|x|\geq 2R$. 
By Proposition \ref{proposition compact} (ii) thus also $u_t^n(x)=0$ for $dx$-almost every  
$x\in\{x\in\bR^d:|x|\geq \bar R\}$ for every $t\in[0,T]$ and $n$ sufficiently large, 
for a constant $\bar{R}=\bar{R}(R,K,K_0, K_{\xi}, K_{\eta})$. 
This is clearly preserved in the limit as $n\to\infty$.
This finishes the proof.
\end{proof}

\mysection{Proof of Theorem \ref{theorem regularity}}
\label{sec proof}

Let $\chi$ be a smooth function on $\bR$ 
such that $\chi(r)=1$ for $r\in[-1,1]$, $\chi(r)=0$ for $|r|\geq2$, 
$\chi(r)\in[0,1]$ and $\sum_{k=1}^{m+2}|d^k/(dr^k)\chi(r)|\leq C$ 
for all $r\in\bR$ and a real nonnegative constant $C$. For integers $n\geq1$ 
we define the function $\chi_n$ by $\chi_n(x)=\chi(|x|/n)$, $x\in\bR^d$.

\begin{lemma}                                                                          
\label{lemma truncated coeffs}
(i) Let $b=(b^i)$ be an $\bR^d$-valued function on $\bR^m$ 
such that for a constant $L$ 
\begin{equation}                                                                        \label{8.5.2}
|b(v)-b(z)|\leq L|v-z| \quad\text{for all $v,z\in\bR^m$}. 
\end{equation}                                                                           
Then for $b_n(z)=\chi(|z|/n)b(z)$, $z\in\bR^m$,  
for integers $n\geq1$ we have 
\begin{equation}                                                                          \label{8.5.3}
|b_n(z)|\leq 2nL+|b(0)|,
\quad 
|b_n(v)-b_n(z)|\leq (5L+2|b(0)|)|v-z| \quad\text{for all $v,z\in\bR^m$}. 
\end{equation}
(ii) Let additionally to (i) the function $b$ satisfy
\begin{equation}
\label{Db}
\sum_{k=1}^m |D^k b|\leq M,
\end{equation} 
for a constant $M>0$. Then $b_n$ satisfies \eqref{Db} 
in place of $b$ with $M' = M'(M,C,m,|b(0)|)$ in place of $M$.
\end{lemma}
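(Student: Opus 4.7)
For part (i), the bound $|b_n(z)| \leq 2nL + |b(0)|$ is immediate from $|\chi_n| \leq 1$ and the fact that $\chi_n$ is supported in $\{|z| \leq 2n\}$, combined with the Lipschitz estimate $|b(z)| \leq |b(0)| + L|z| \leq |b(0)| + 2nL$ on this set. For the Lipschitz bound I would start from the decomposition
$$
b_n(v) - b_n(z) = \chi_n(v)\bigl(b(v) - b(z)\bigr) + \bigl(\chi_n(v) - \chi_n(z)\bigr)\, b(z).
$$
The first summand is controlled by $L|v-z|$ directly from \eqref{8.5.2}. For the second I would apply the mean value theorem to $\chi(|\cdot|/n)$ to get $|\chi_n(v) - \chi_n(z)| \leq (C/n)|v-z|$ and split into cases. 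If both $|v|, |z| < 2n$, then $|b(z)| \leq |b(0)| + 2nL$ yields a contribution $\leq (2CL + C|b(0)|/n)|v-z|$. If $|v|, |z| \geq 2n$, both sides vanish. The remaining case (by symmetry, $|v| < 2n \leq |z|$, so $\chi_n(z)=0$) is the delicate one: here the key observation is
$$
\chi_n(v) = \chi_n(v) - \chi(2) \leq C(2 - |v|/n) = (C/n)(2n - |v|) \leq (C/n)|v-z|,
$$
using $2n - |v| \leq |z| - |v| \leq |v-z|$. Combined with $|b(z)| \leq |b(0)| + L(2n + |v-z|)$ from $|z| \leq 2n + |v-z|$, this converts the $n$-growth of $|b(z)|$ into a linear bound in $|v-z|$ with an $n$-independent constant. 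Collecting the contributions (and folding in the factor $C$) yields the stated constant of the form $5L + 2|b(0)|$.

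For part (ii) I would apply the Leibniz rule
$$
D^\alpha b_n = \sum_{\beta \leq \alpha} \binom{\alpha}{\beta}\, D^\beta \chi_n \cdot D^{\alpha-\beta} b,
\qquad 1 \leq |\alpha| \leq m.
$$
Since $\chi \equiv 1$ on $[-1,1]$, $\chi_n$ is identically $1$ on $\{|z| \leq n\}$, hence smooth on all of $\bR^d$ despite the non-smoothness of $|z|$ at the origin. A standard chain-rule computation (Faà di Bruno applied to $\chi(|z|/n)$ on the annulus $n < |z| < 2n$) then gives $|D^\beta \chi_n(z)| \leq N(d, m, C)\, n^{-|\beta|}$ with support contained in $\{|z| \leq 2n\}$. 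Terms with $\beta = 0$ are bounded by $|D^\alpha b| \leq M$ via \eqref{Db}; intermediate terms $0 < |\beta| < |\alpha|$ are bounded by $N M$; the term $\beta = \alpha$ is estimated using the pointwise bound $|b(z)| \leq |b(0)| + 2nL$ on the support of $D^\alpha \chi_n$, giving $|D^\alpha \chi_n \cdot b| \leq N n^{-|\alpha|}(|b(0)| + 2nL) \leq N(|b(0)| + 2L)$ for $|\alpha| \geq 1$ and $n \geq 1$. Summing these contributions produces the desired $M' = M'(M, C, m, |b(0)|)$.

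The main obstacle, as indicated above, is the subcase $|v| < 2n \leq |z|$ of the Lipschitz estimate in (i), where naively $|b(z)|$ is unbounded in $n$. The resolution is the quantitative vanishing of $\chi_n$ at the boundary of its support: $\chi_n(v)$ itself vanishes at rate $(C/n)(2n - |v|) \leq (C/n)|v-z|$, and this absorbs both the growth of $|b(z)|$ and the factor $n^{-1}$ coming from the MVT applied to $\chi$, leaving an $n$-independent Lipschitz constant. Everything else is bookkeeping with the Leibniz rule and the fact that $\chi_n$ is constant near the origin.
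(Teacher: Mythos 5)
Your argument is essentially correct, and it supplies details the paper itself omits: for (i) the paper simply cites Lemma 7.2 of \cite{GG2}, and (ii) it declares an easy exercise, so there is no in-paper proof to compare against; your decomposition $b_n(v)-b_n(z)=\chi_n(v)(b(v)-b(z))+(\chi_n(v)-\chi_n(z))b(z)$ together with the quantitative vanishing $\chi_n(v)\leq (C/n)(2n-|v|)\leq (C/n)|v-z|$ at the edge of the support is exactly the natural route. Two small repairs are needed. First, in the mixed case $|v|<2n\leq|z|$ your bound $|b(z)|\leq |b(0)|+L(2n+|v-z|)$ multiplied by $(C/n)|v-z|$ produces a term $(CL/n)|v-z|^2$, which is not linear in $|v-z|$; the fix is immediate, either by using $\chi_n(v)\leq 1$ on the $L|v-z|$ part, or more simply by noting that in this case $b_n(v)-b_n(z)=\chi_n(v)b(v)$ and estimating $|b(v)|\leq |b(0)|+2nL$ directly (no decomposition needed), which gives the clean bound $(C/n)(|b(0)|+2nL)|v-z|$. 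Note also that your Lipschitz constant carries the factor $C$, whereas the lemma states the explicit value $5L+2|b(0)|$ (inherited from the normalisation of $\chi$ in \cite{GG2}); for the use made of the lemma only $n$-independence matters, so this is harmless but worth flagging. Second, in (ii) your estimate of the top-order term $D^{\alpha}\chi_n\cdot b$ uses $|b(z)|\leq |b(0)|+2nL$ and hence yields an $M'$ depending on $L$, while the statement asserts $M'=M'(M,C,m,|b(0)|)$; since \eqref{Db} contains $k=1$, you should instead use $|Db|\leq M$ to get $|b(z)|\leq |b(0)|+2nM$ on the support of $D^{\alpha}\chi_n$, which restores the stated dependence. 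With these one-line adjustments the proof is complete.
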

\begin{proof}
The proof of (i) is Lemma 7.2 in \cite{GG2}. The proof of (ii) is an easy exercise.
\end{proof}

To preserve the diffeomorphic property of the mappings 
\begin{equation}
\label{tau}
\tau^\eta_{t,\frz_0,\theta}(x) 
= x+\theta\eta_t(x,\frz_0)\quad\text{and}\quad \tau^\xi_{t,\frz_1,\theta}(x)
 = x+\theta\xi_t(x,\frz_1) 
\end{equation}
(for all $\omega\in\Omega$, $t\in [0,T]$, $\theta\in [0,1]$ and $\frz_i\in\frZ_i$, $i=0,1$) 
as a function of $x\in\bR^d$, when the functions $\xi$ and $\eta$ are truncated, 
we introduce, for each 
fixed $R>0$ and $\epsilon>0$, the function 
$\kappa^R_\epsilon$ defined on $\bR^d$ by 
\begin{equation}                                                                             \label{Sandy function}
\kappa^R_\epsilon(x)=\int_{\bR^d}\phi^R_{\varepsilon}(x-y)k(y)\,dy, 
\quad
\phi^R_{\varepsilon}(x)\begin{cases}
1,& |x|\leq R+1,\\
1+\epsilon\log\big(\tfrac{R+1}{|x|}\big),& R+1<|x|< (R+1)e^{1/\epsilon},\\
0,& |x|\geq (R+1)e^{1/\epsilon}, 
\end{cases}
\end{equation}
where $k$ is a nonnegative $C^{\infty}$ 
mapping on $\bR^d$ with support in $\{x\in\bR^d:|x|\leq 1\}$. 
\begin{lemma}                                                                               \label{lemma Sandy}
Let $\xi:\bR^d\mapsto\bR^d$ be such that for a constant $L\geq 1$ 
and for every $\theta\in [0,1]$ the function 
$\tau_\theta(x)=x+\theta\xi(x)$ is $L$-biLipschitz, i.e. 
\begin{equation}
\label{tth}L^{-1}|x-y|\leq|\tau_\theta(x)-\tau_\theta(y)|\leq L|x-y|
\end{equation}
for all $x,y\in\R^d$. Then for any $M>L$ and any $R>0$ 
there is an $\epsilon=\epsilon(L,M,R,|\xi(0)|)>0$ such that 
with $\kappa^R:=\kappa^R_\epsilon$ the function $\xi^R:=\kappa^R\xi$ 
vanishes for $|x|\geq \bar R$ for a constant 
$\bar{R}=\bar R(L,M,R,|\xi(0)|)>R$, $|\xi^R|$ is bounded by a constant 
$N=N(L,M,R, |\xi(0)|)$,  and for every $\theta\in [0,1]$ the mapping
$$
\tau^R_\theta (x) = x + \theta\xi^R(x) , \quad x\in\bR^d
$$
is $M$-biLipschitz.
\end{lemma}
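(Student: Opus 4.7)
The plan is to choose $\epsilon = \epsilon(L,M,R,|\xi(0)|)$ small enough that $\xi^R = \kappa^R \xi$, with $\kappa^R := \kappa^R_\epsilon$, is a bounded, compactly supported Lipschitz perturbation of $\xi$ whose Jacobian contribution is a small operator-norm perturbation of a Jacobian of the form $\nabla \tau_{\theta'}(z)$ for some $\theta' \in [0,1]$ (depending on $z$). The key structural feature of $\phi^R_\epsilon$ that permits this is its logarithmic profile in the transition annulus, giving $|\nabla \phi^R_\epsilon(w)| = \epsilon/|w|$ there and $0$ otherwise. After convolution with the unit kernel $k$, the basic properties of $\kappa^R_\epsilon$ become: $\kappa^R_\epsilon \equiv 1$ on $\{|x| \leq R\}$, $\kappa^R_\epsilon \equiv 0$ on $\{|x| \geq \bar R\}$ with $\bar R := (R+1) e^{1/\epsilon} + 1$, and the uniform estimate
\begin{equation*}
(1 + |x|)\,|\nabla \kappa^R_\epsilon(x)| \leq C\,\epsilon \quad \text{for all } x \in \bR^d,
\end{equation*}
with $C$ absolute (for $|x| \geq 2$ one uses $|x-y| \geq |x|/2$ to obtain $|\nabla \kappa^R_\epsilon(x)| \leq 2\epsilon/|x|$; the bounded-$|x|$ part of the support of $\nabla \kappa^R_\epsilon$ is handled by the cruder bound $|\nabla\phi^R_\epsilon(w)| \leq \epsilon/(R+1)$).

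The $L$-biLipschitz hypothesis on $\tau_1$ yields $|\xi(x) - \xi(y)| \leq (L+1)|x-y|$ by triangle inequality, hence $|\xi(x)| \leq |\xi(0)| + (L+1)|x|$. Combined with the compact support of $\kappa^R$, this immediately gives the first two conclusions of the lemma: $\xi^R$ vanishes for $|x| \geq \bar R$ and is bounded there by $|\xi(0)| + (L+1)\bar R =: N(L,M,R,|\xi(0)|)$. For the biLipschitz property of $\tau^R_\theta$, Rademacher's theorem applied to $\xi$ together with the product rule yields, for a.e.\ $z$,
\begin{equation*}
\nabla \tau^R_\theta(z) = \nabla \tau_{\theta \kappa^R(z)}(z) + \theta\, \nabla \kappa^R(z) \otimes \xi(z),
\end{equation*}
where $\nabla \tau_{\theta\kappa^R(z)}(z) = I + \theta \kappa^R(z)\,\nabla\xi(z)$. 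The biLipschitz hypothesis, applied at rational $\theta' \in [0,1] \cap \bQ$ and extended by continuity (of $\theta' \mapsto I + \theta'\nabla\xi(z)$, together with the observation that $\det(I+\theta'\nabla\xi(z))$ is a nonvanishing polynomial on $[0,1]$ for a.e.\ $z$, as otherwise the inverse norm would blow up at some nearby rational $\theta'$), delivers for a.e.\ $z$ and \emph{every} $\theta' \in [0,1]$ the bounds $\|\nabla\tau_{\theta'}(z)\| \leq L$ and $\|\nabla\tau_{\theta'}(z)^{-1}\| \leq L$. Inserting the linear growth of $\xi$ and the gradient estimate on $\kappa^R_\epsilon$ from the previous paragraph, the perturbation term satisfies
\begin{equation*}
\|\theta\,\nabla\kappa^R(z) \otimes \xi(z)\| \leq \frac{C\epsilon}{1+|z|}\bigl(|\xi(0)| + (L+1)|z|\bigr) \leq C\epsilon\bigl(|\xi(0)| + L+1\bigr) =: \delta(\epsilon).
\end{equation*}
A standard Neumann-series perturbation then gives, whenever $L\delta(\epsilon) < 1$,
\begin{equation*}
\|\nabla\tau^R_\theta(z)\| \leq L + \delta(\epsilon), \qquad \|\nabla\tau^R_\theta(z)^{-1}\| \leq \frac{L}{1 - L\delta(\epsilon)},
\end{equation*}
and I fix $\epsilon = \epsilon(L,M,R,|\xi(0)|)$ small enough, say $\delta(\epsilon)\leq\tfrac{1}{2}\min(M-L,(M-L)/(LM))$, so that both right-hand sides are $\leq M$ uniformly in $\theta \in [0,1]$.

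The main obstacle is the final step: upgrading these a.e.\ pointwise Jacobian bounds to the global $M$-biLipschitz property of $\tau^R_\theta$. The upper bound $|\tau^R_\theta(x) - \tau^R_\theta(y)| \leq M|x-y|$ follows at once from $\|\nabla\tau^R_\theta\|_\infty \leq M$ by integration along straight lines, valid in the Lipschitz category. The lower bound requires global injectivity, which I would establish by a Hadamard-type argument: since $\xi^R$ has compact support in $\{|x|\leq\bar R\}$, the map $\tau^R_\theta$ coincides with the identity outside this ball and is therefore proper and equal to the identity at infinity; the polynomial $\theta\mapsto \det\nabla\tau^R_\theta(z)$ equals $1$ at $\theta=0$ and never vanishes on $[0,1]$ by the Neumann bound above, hence is positive throughout $[0,1]$ by continuity, so $\det\nabla\tau^R_\theta(z) > 0$ a.e. A degree-theoretic argument (using that $\tau^R_\theta$ is a proper continuous self-map of $\bR^d$ homotopic to the identity through $t\mapsto \tau^R_{t\theta}$, hence of topological degree $+1$, combined with the a.e.\ positivity of the Jacobian determinant) then forces $\tau^R_\theta$ to be a homeomorphism of $\bR^d$, and applying the straight-line integration argument to $(\tau^R_\theta)^{-1}$ using $\|\nabla\tau^R_\theta(z)^{-1}\|\leq M$ a.e.\ yields the required lower biLipschitz bound.
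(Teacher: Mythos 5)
Your construction and the estimates up to the a.e.\ Jacobian bounds are correct: the stated properties of $\kappa^R_\epsilon$, the bounds $|\xi(x)-\xi(y)|\leq (L+1)|x-y|$ and $|\xi(x)|\leq|\xi(0)|+(L+1)|x|$, the decomposition $\nabla\tau^R_\theta(z)=\nabla\tau_{\theta\kappa^R(z)}(z)+\theta\,\xi(z)\otimes\nabla\kappa^R(z)$ with a cross term of size $O(\epsilon)$ thanks to the pairing of $|\nabla\kappa^R(z)|\lesssim\epsilon/(1+|z|)$ with the linear growth of $\xi$, the Neumann-series bounds, and the upper Lipschitz bound by integration along segments are all fine (the detour through rational $\theta'$ is unnecessary: at any point where $\xi$ is differentiable the difference quotient of $\tau_{\theta'}$ gives both bounds for every $\theta'\in[0,1]$ at once). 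Note that the paper does not prove this lemma at all; it cites Lemma 7.3 of \cite{GG2}, so yours is necessarily an independent argument and must stand on its own.

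The genuine gap is the last step. From degree $+1$ and $\det\nabla\tau^R_\theta>0$ a.e.\ one cannot conclude by general degree theory that $\tau^R_\theta$ is a homeomorphism: for a Lipschitz map, a.e.\ positivity of the Jacobian only forces the local degrees to be nonnegative, so total degree $1$ is still compatible with a value having two preimages of local index $+1$ and $0$; excluding this requires knowing the map is open and discrete with local index $\geq 1$, which for Lipschitz maps with a.e.\ positive Jacobian is Reshetnyak's theorem on maps of bounded distortion. Your uniform bounds $\|\nabla\tau^R_\theta\|\leq M$, $\|(\nabla\tau^R_\theta)^{-1}\|\leq M$ a.e.\ do make the map quasiregular, so the conclusion you want is true, but the theorem is neither invoked nor provable by the sketch given — pointwise a.e.\ invertibility of the derivative gives no local injectivity for Lipschitz maps (the planar map sending $re^{i\phi}$ to $re^{2i\phi}$ is Lipschitz with Jacobian $\equiv 2$ a.e.\ and is two-to-one). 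Moreover, even granting the homeomorphism property, integrating $\|(\nabla\tau^R_\theta)^{-1}\|\leq M$ along straight lines in the image presupposes that the inverse is absolutely continuous on lines with a.e.\ derivative $(\nabla\tau^R_\theta)^{-1}\circ(\tau^R_\theta)^{-1}$, which is again quasiconformal theory rather than automatic. An elementary repair avoids degree theory entirely: for $|y|\leq|x|$ write $\tau^R_\theta(x)-\tau^R_\theta(y)=\bigl(\tau_{\theta\kappa^R(x)}(x)-\tau_{\theta\kappa^R(x)}(y)\bigr)+\theta\bigl(\kappa^R(x)-\kappa^R(y)\bigr)\xi(y)$ and use the essentially radial structure of $\kappa^R_\epsilon$, namely $|\kappa^R(x)-\kappa^R(y)|\leq C\epsilon\,|x-y|/\max\bigl(R+1,\min(|x|,|y|)-1\bigr)$ (the logarithmic profile varies by at most $\epsilon$ times the relative change of radius, and convolution with $k$ shifts radii by at most $1$), so the error term is at most $C\epsilon(|\xi(0)|+L+1)|x-y|$; combined with \eqref{tth} this yields both bi-Lipschitz inequalities directly for small $\epsilon$. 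Your segment-wise gradient bound on $\kappa^R$ alone cannot deliver this, since the segment joining $x$ and $y$ may pass near the origin while $|\xi(y)|$ is large, which is precisely why the radial increment bound, and not just $(1+|z|)|\nabla\kappa^R(z)|\lesssim\epsilon$, is needed for the lower bound.
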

\begin{proof}
This is Lemma 7.3 in \cite{GG2}. 
\end{proof}

We summarize the results of Lemmas 7.1, 7.2 and Remark 7.1 in \cite{GG2} 
in the following lemma.
For that purpose, define the functions $b^n=(b^{ni}(t,z))$, 
$B^n = (B^{nj}(t,z))$, 
$\sigma^n=(\sigma^{nij}(t,z))$, 
$\eta^n=(\eta^{ni}(t,z,\frz_0))$ and $\xi^n=(\xi^{ni}(t,z,\frz_1))$ by 
\begin{equation}
\label{truncated coeff}
(b^n, B^n,\sigma^n, \rho^n) = (b, B,\sigma, \rho)\chi_n,\quad 
(\eta^n,\xi^n)=(\eta,\xi) \bar\chi_n 
\end{equation}
for every integer $n\geq1$, 
where $\chi_n$ and $\bar\chi_n$ are functions on $\bR^{d+d'}$ 
defined by $\chi_n(z)=\chi(|z|/n)$ and $\bar\chi_n(x,y)=\kappa^R(|x|/n)\chi(|y|/n)$ for 
$z=(x,y)\in\bR^{d+d'}$, with $\chi$ used in Lemma \ref{lemma truncated coeffs}
and with $\kappa^R=\kappa^R_\varepsilon$ 
from Lemma \ref{lemma Sandy}, 
such that, by the $L$-biLipschitzness of the mappings in \eqref{tau}, the mappings 
\begin{equation*}
\tau^{\eta^n}_{t,\frz_0,\theta}(x) = x+\theta\eta^n_t(x,\frz_0)
\quad
\text{and}\quad \tau^{\xi^n}_{t,\frz_1,\theta}(x) = x+\theta\xi^n_t(x,\frz_1) 
\end{equation*}
are biLipschitz (for all $\omega\in\Omega$, $t\in [0,T]$, 
$\theta\in [0,1]$ and $\frz_i\in\frZ_i$, $i=0,1$).

\begin{lemma}                                                         \label{lemma convergence 2}
Let Assumptions \ref{assumption SDE}, \ref{assumption p} 
and \ref{assumption SDE2} hold. 
If $K_1\neq 0$ in Assumption \ref{assumption SDE} (ii), 
then let additionally Assumption \ref{assumption nu}
for some $r>2$ hold. 
Assume the initial conditional density 
$\pi_0 =P(X_0\in dx|\cF^Y_0)/dx$ exists (a.s.) and satisfies 
$\E|\pi_0|_{W^m_p}^p<\infty$ for some $p\geq 2$ and integer $m\geq 0$.
Then there exist sequences
$$
(X_0^n)_{n=1}^\infty, ((X_t^n,Y_t^n)_{t\in [0,T]})_{n=1}^\infty,
\quad
\text{as well as}\quad(\pi_0^n)_{n=1}^\infty
\quad \text{and}\quad ((\pi_t^n)_{t\in [0,T]})_{n=1}^\infty
$$
such that the following are satisfied:\newline
(i) For each $n\geq 1$ the coefficients 
$b^n,B^n,\sigma^n,\rho^n,\xi^n$ and $\eta^n$, 
defined in \eqref{truncated coeff}, satisfy Assumptions \ref{assumption SDE} 
and \ref{assumption p} with $K_1=K_2=0$ and constants 
$K_0'=K'_0(n, K, K_0, K_1,K_{\xi},K_{\eta})$ 
 and $L'=L'(K,K_0, K_1, L,K_{\xi},K_{\eta})$ in place of 
 $K_0$ and $L$, Assumption \ref{assumption SDE2} 
 with a constant $K'=K'(K_0,K_1)$ in place of $L$, as well as
 Assumption \ref{assumption estimates} with 
 $\lambda'=\lambda'(K_0,K_1,K_\xi,K_\eta,\lambda)$ 
 in place of $\lambda$. Moreover, for each $n\geq 1$ they 
 satisfy the support condition \eqref{supp_condition} of 
 Lemma \ref{lemma compact} for some $R=R(n)$.\newline
(ii) For each $n\geq 1$ the random variable $X_0^n$ is $\cF_0$-measurable and satisfies 
$$
\lim_{n\to\infty} X_0^n = X_0\,\, ,\omega \in \Omega,\quad\text{and}
\quad \E|X_0^n|^r \leq N(1+\E|X_0|^r)
$$
for $r\geq 1$ with a constant $N$ independent of $n$.\newline
(iii) $Z_t^n=(X_t^n,Y_t^n)$ is the solution to \eqref{system_1} 
with the coefficients $b^n,B^n,\sigma^n,\rho^n,\xi^n$ and $\eta^n$ 
in place of $b,B,\sigma,\rho,\xi$ and $\eta$, respectively, 
and with initial condition $Z_0^n = (X_0^n,Y_0)$.\newline
(iv) For each $n\geq 1$ we have $\pi_0^n 
= P(X_0^n\in dx|\cF^{Y^n}_0)/dx$, $\pi_0^n(x)=0$ for $|x|\geq n+1$ and 
$$
\lim_{n\to\infty}|\pi^n_0-\pi_0|_{\bW^m_p}=0,
$$
where $\pi_0 = P(X_0\in dx|\cF^{Y^n}_0)/dx$. \newline
(v) For each $n\geq 1$ there exists an $L_r$-solution $u^n$ to \eqref{equdZ}, $r=2,p$, such that $u^n$ is the unnormalised conditional density of $X^n$ given $Y^n$, almost surely
$$
u_t^n(x)=0 \quad\text{for $dx$-a.e. $x\in\{x\in\bR^d:|x|\geq\bar R\}$ for all $t\in[0,T]$} 
$$
with a constant $\bar R=\bar R(n,K,K_0,K_\xi,K_\eta)$ and 
\begin{equation}
\label{28.5.22}
\E\sup_{t\in [0,T]}|u^n_t|_{L_p}^p\leq N\E|\pi_0^n|^p_{L_p}
\end{equation} 
with a constant 
$N=N(d, d',K, L, K_{\xi}, K_{\eta}, T, p,\lambda,|\bar\xi|_{L_2}, |\bar\eta|_{L_2})$. Moreover,
$$
u^n\rightarrow  u
\quad
\text{weakly in $\bL_{r,q}$ for $r=p,2$ and all integers $q>1$},
$$
where $u$ is the unnormalised conditional density 
of $X$ given $Y$, satisfying \eqref{28.5.22} 
with the same constant $N$ and $(u,\pi_0)$ 
in place of $(u^n,\pi_0^n)$. \newline
(vi) Consequently, for each $n\geq 1$ and $t\in [0,T]$ we have 
$$
\pi^n_t(x) = P(X_t^n\in dx|\cF^{Y^n}_t)/dx 
= u^n_t(x){^o\!\gamma_t^n},\quad\text{almost surely},
$$
as well as
$$
\pi_t(x) = P(X_t\in dx|\cF^{Y}_t)/dx 
= u_t(x){^o\!\gamma_t},\quad\text{almost surely},
$$
where ${^o\!\gamma_t^n}$ and ${^o\!\gamma_t}$ are cadlag 
 positive normalising process, adapted to $\cF^{Y^n}_t$ 
 and $\cF^Y_t$, respectively.
\end{lemma}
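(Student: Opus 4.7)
The plan is to carry out a standard truncation/approximation argument in three stages: construct the approximating coefficients and initial data, verify (i)--(iv) from elementary properties of the construction, invoke the established solvability results to obtain $u^n$ and its estimates, and then identify the weak limit of $u^n$ with the unnormalised density $u$ of the full system.

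For the coefficients I would define $b^n, B^n, \sigma^n, \rho^n, \xi^n, \eta^n$ by \eqref{truncated coeff}, with $\chi_n(z) = \chi(|z|/n)$ and $\bar{\chi}_n(x, y) = \kappa^{R_n}_{\varepsilon_n}(|x|/n)\chi(|y|/n)$, where the parameters $(R_n, \varepsilon_n)$ are chosen through Lemma \ref{lemma Sandy} so that the biLipschitz property of $x \mapsto x + \theta \xi^n_t(x, \frz_1)$ and $x \mapsto x + \theta \eta^n_t(x, \frz_0)$ is preserved uniformly in $n, \theta, \frz_i, t, \omega$; this translates into Assumption \ref{assumption estimates}(i) holding with a uniform constant $\lambda' = \lambda'(\lambda, K_0, K_1, K_\xi, K_\eta)$ via Remark \ref{remark diffeom}. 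For the initial data, I would enlarge the probability space to carry a uniform $[0,1]$-valued random variable $U$ independent of $\cF$, set $\pi_0^n(x) := c_n(\omega)\, \pi_0(x)\, \mathbf{1}_{|x| \le n}$ with $c_n$ normalising $\pi_0^n$ to a probability density, and define $X_0^n := F_n^{-1}(U; \cF^Y_0)$ via the regular conditional inverse c.d.f.\ of $\pi_0^n$. A standard coupling can then be arranged so that $X_0^n = X_0$ on $\{|X_0| \le n\}$, giving the convergence $X_0^n \to X_0$ together with the required moment bounds.

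Items (i)--(iv) are then essentially bookkeeping. Item (i) follows from Lemma \ref{lemma truncated coeffs} (for the Lipschitz, growth, and derivative bounds on $b^n, B^n, \sigma^n, \rho^n$), Lemma \ref{lemma Sandy} combined with Remark \ref{remark diffeom} (for Assumption \ref{assumption estimates}(i)), and the Leibniz rule applied to products with $\chi_n, \bar{\chi}_n$ (for Assumption \ref{assumption SDE2}); the support condition \eqref{supp_condition} is immediate from the supports of the cutoffs. Item (iii) is It\^{o}'s classical existence and uniqueness theorem for the truncated SDE with initial condition $Z^n_0 = (X_0^n, Y_0)$. Item (ii) comes from the coupled construction, and item (iv) from the fact that $\pi_0^n$ is supported in $\{|x| \le n+1\}$ and $\pi_0^n \to \pi_0$ in $\bW^m_p$ by dominated convergence. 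The existence part of item (v), including \eqref{28.5.22}, follows by applying Theorem \ref{theorem Lp} to the truncated system (whose hypotheses are verified by (i)), and the compact spatial support of $u^n_t$ comes from Lemma \ref{lemma compact}.

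The main obstacle, which I expect to be the technical heart of the argument, is the weak convergence $u^n \rightharpoonup u$ in $\bL_{r,q}$. The strategy is: uniform boundedness of $(u^n)$ in $\bL_{p,q}$ (from \eqref{28.5.22} plus the $\bW^m_p$-convergence of $\pi_0^n$) gives weak compactness, so along a subsequence $u^n \rightharpoonup \tilde{u}$ in $\bL_{p,q}$. To identify $\tilde{u}$ with $u$, I would first establish $\E \sup_{t \le T} |Z^n_t - Z_t|^2 \to 0$ via the standard SDE stability estimate, exploiting that on $\{\sup_{t \le T} |Z_t| \le n\}$ the truncated and non-truncated coefficients coincide and satisfy uniform Lipschitz bounds. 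This yields convergence of the coefficients of the truncated Zakai equation (evaluated along $(Y^n)$) to those of the true Zakai equation in a topology strong enough to be paired against the weak convergence of $u^n$, allowing one to pass to the limit in each of the (linear-in-$u$) terms of \eqref{equZ}. The limit $\tilde{u}$ is then an $L_p$-solution of the original Zakai equation with initial condition $\pi_0$ satisfying \eqref{condition u}, and the uniqueness statement in Theorem \ref{theorem Lp}(i) forces $\tilde{u} = u$; since the limit is unique, the whole sequence converges. Item (vi) then follows by applying Theorem \ref{theorem Lp}(ii) separately to the truncated and to the original systems.
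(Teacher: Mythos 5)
The paper does not prove this lemma in situ: its ``proof'' is the single line that this is Corollary 7.4 in \cite{GG2}, so your proposal is really a reconstruction of that cited result. Your architecture (truncate the coefficients as in \eqref{truncated coeff}, using Lemma \ref{lemma Sandy} to keep the bi-Lipschitz constant of $x\mapsto x+\theta\xi^n$, $x\mapsto x+\theta\eta^n$ uniform in $n$, hence Assumption \ref{assumption estimates}(i) with a uniform $\lambda'$; truncate the initial density; solve the truncated system and apply Theorem \ref{theorem Lp} and Lemma \ref{lemma compact} for existence, compact support and \eqref{28.5.22}; then identify a weak limit through uniqueness of $L_2$-solutions) is the natural one and is consistent with what the citation provides. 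But two points in your construction are genuinely flawed or incomplete.

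First, the initial data. With $\pi_0^n(x)=c_n(\omega)\pi_0(x)\1_{|x|\le n}$ you leave $W^m_p$ for $m\ge 1$: multiplication by the sharp indicator of a ball destroys weak differentiability across the sphere $\{|x|=n\}$, whereas (iv) requires $\pi_0^n\to\pi_0$ in $\bW^m_p$ and the subsequent use of Corollary \ref{corollary 1.3.4.22} needs $\pi_0^n\in\bW^m_p$. You must cut off with a smooth function equal to $1$ on $\{|x|\le n\}$, vanishing for $|x|\ge n+1$, with derivatives bounded uniformly in $n$, and then renormalise (the random factor $c_n(\omega)\downarrow 1$ needs a small dominated-convergence argument for the $\bW^m_p$-convergence). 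Relatedly, the inverse-c.d.f.\ recipe $X_0^n=F_n^{-1}(U)$ does not by itself give $X_0^n=X_0$ on $\{|X_0|\le n\}$; what works is to keep $X_0$ on that event and redraw from the renormalised truncated conditional density using the independent uniform otherwise, checking that this reproduces $\pi_0^n$ as the conditional density and that the enlargement of $\cF_0$ leaves $(W,V,\tilde N_0,\tilde N_1)$ with their martingale properties. Second, the identification $u^n\rightharpoonup u$ is where the real work lies and your sketch leaves the decisive step unjustified: to invoke the uniqueness assertion of Theorem \ref{theorem Lp}(i) you must verify that the weak limit is an $L_2$-solution satisfying \eqref{condition u} (the almost sure bounds on $|u_t|_{L_1}$ and on $\int_{\bR^d}|y|^2|u_t(y)|\,dy$), and neither of these is inherited from weak $\bL_{p,q}$-convergence; they must be extracted from bounds on the truncated unnormalised densities that are uniform in $n$ (e.g.\ via moment estimates for $Z^n$ uniform in $n$). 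Moreover $u^n$ and $u$ are densities of conditional expectations under the different measures $Q^n$ and $Q$, so passing to the limit requires control of $\gamma^n$ (equivalently of ${}^o\!\gamma^n$) in addition to $\E\sup_t|Z^n_t-Z_t|^2\to 0$; an alternative is to identify the limit directly through $\mu^n_t(\vp)=\E_{Q^n}(\gamma_t^{-1,n}\vp(X_t^n)|\cF^{Y^n}_t)$ rather than by passing to the limit in the SPDE. As written, these steps are asserted rather than proved.
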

\begin{proof}
This is Corollary 7.4 in \cite{GG2}.
\end{proof}

Now we are in the position to prove our main result.

\begin{proof}[Proof of Theorem \ref{theorem regularity}]

\textbf{Step I.}
Assume first that the support condition \eqref{supp_condition} 
holds with some $R>0$ and that the initial conditional 
density $\pi_0$ is such that $\pi_0(x)=0$ for $|x|\geq R$.
By Corollary \ref{corollary 1.3.4.22} we know that 
there exists a $W^m_p$-solution $(u_t)_{t\in [0,T]}$ 
to \eqref{equdZ} with initial condition $\pi_0$, satisfying
\begin{equation}                                                                                                  
\label{13.6.22.1}
\E\sup_{t\in [0,T]}|u_t|_{W^m_p}^p\leq N\E|\pi_0|_{W^m_p}^p
\end{equation}
with a constant 
$N=
N(m,d,p,T,K, K_{\xi}, K_{\eta}, L,\lambda,|\bar{\xi}|_{L_2},|\bar{\eta}|_{L_2})$. 
Moreover, we have $u_t=0$ for $|x|\geq \bar{R}$, 
for a constant $\bar{R}=\bar{R}(R,K,K_0,K_1,K_\xi,K_\eta)$, and hence clearly 
$$
\sup_{t\in [0,T]}|u_t|_{L_1}
\leq \bar{R}^{d/q}\sup_{t\in [0,T]}|u_t|_{L_p}
\quad
\text{and}
\quad
\sup_{t\in [0,T]}\int_{\bR^d}|y|^2|u_t(y)|\,dy<\infty\text{ (a.s.),}
$$
with $q=p/(p-1)$.
Since also $\pi_0 = P(X_0\in dx| \cF^Y_0)/dx\in \bL_1$, 
then in particular $\pi_0\in \bL_2$ and hence  
\begin{equation}
\label{13.6.22.2}
\E\sup_{t\in [0,T]}|u_t|_{L_2}^2\leq N\E|\pi_0|_{L_2}^2,
\end{equation}
with a constant 
$N=
N(d,p,T,K, K_{\xi}, K_{\eta}, L,\lambda,|\bar{\xi}|_{L_2},|\bar{\eta}|_{L_2})$
By Lemma \ref{lemma compact} $u$ is the unique $L_2$-solution 
and therefore by Theorem \ref{theorem Lp}, $u$ is in particular 
the unnormalised conditional density, i.e., $u_t = d\mu_t/dx$
for all $t\in [0,T]$,  almost surely, with $\mu$ the unnormalised 
conditional distribution from Theorem \ref{theorem Z1}. Thus also for each $t\in [0,T]$,
$$
\pi_t = P(X_t\in dx|\cF^Y_t)/dx = u_t {^o\!\gamma}_t,\quad\text{almost surely,}
$$
where $ {^o\!\gamma}_t$ is the $\cF^Y_t$-optional projection of 
the normalizing process $\gamma$ under $P$ introduced in \eqref{o gamma}.\newline
\textbf{Step II.}
Finally, we dispense with the assumption that the coefficients 
and the initial condition are compactly supported.
Define the functions $b_n,B_n,\sigma_n,\rho_n,\xi_n$ and $\eta_n$ 
as in \eqref{truncated coeff}. Note that by Lemma \ref{lemma convergence 2} 
the truncated coefficients satisfy Assumptions \ref{assumption SDE} 
and \ref{assumption p} with $K_1=K_2=0$ and constants 
$K_0'=K'_0(n, K, K_0, K_1,K_{\xi},K_{\eta})$ 
 and $L'=L'(K,K_0, K_1, L,K_{\xi},K_{\eta})$ in place of $K_0$ and $L$, 
 the coefficients $b_n,B_n,\sigma_n,\rho_n$ satisfy 
 Assumption \ref{assumption SDE2} with a constant $K'=K'(m,K_0,K_1)$ 
 in place of $L$, and moreover that the coefficients $\eta_n$ and $\xi_n$ 
 satisfy Assumption \ref{assumption SDE2} with $K'\bar\eta$ and $K'\bar\xi$ 
 instead of $\bar{\eta}$ and $\bar{\xi}$ respectively. 
 Furthermore, by Lemma \ref{lemma Sandy}, 
 for each $n\geq 1$ the coefficients $\eta_n$ and $\xi_n$ satisfy
Assumption \ref{assumption estimates} with a constant 
$\lambda'=\lambda'(\lambda,K_0,K_1,K_\eta,K_\xi)$ in place of $\lambda$. 
Note that $K'$, $L'$ and $\lambda'$ do not depend on $n$.  
Moreover, for each $n\geq 1$ they satisfy 
the support condition \eqref{supp_condition} 
of Lemma \ref{lemma compact} for some $R=R(n)>0$.
By assumption, $\pi_0=P(X_0\in dx|\cF_0^Y)/dx$ exists almost surely  
and $\E|\pi_0|^p_{W^m_p}<\infty$. 
Then let $(X^n_0)_{n=1}^{\infty}$ and  
$(\pi^n_0)_{n=1}^\infty\subset \bW^m_p$ be the sequences  
from Lemma \ref{lemma convergence 2} such that
\begin{equation}
\label{13.6.22.3}
\lim_{n\to\infty}|\pi^n_0-\pi_0|_{\bW^m_p}=0,
\end{equation}
$\pi_0^n(x)=0$ for  $|x|\geq R(n)$ 
and $\pi^n_0 = P(X_0^n\in dx|\cF^Y_0)/dx$ (a.s.), 
where $(X_0^n,Y_0)$ is the initial condition 
to the system \eqref{system_1}, and $(R(n))_{n=1}^{\infty}$ 
is the sequence of positive numbers from the support condition for 
the coefficients $(\sigma^n,...,\xi^n)$.
By Step I we know that there exists a $W^m_p$-solution 
$(u_t)_{t\in [0,T]}$ to \eqref{equdZ} with initial condition $\pi^n_0$, 
which is the unnormalized conditional density of $X^n=(X^n_t)_{t\in [0,T]}$ 
given $Y^n = (Y^n_t)_{t\in [0,T]}$, where $Z^n=(X^n,Y^n)$ 
is the solution to \eqref{system_1} with initial condition $(X_0^n, Y_0)$.
By Lemma \ref{lemma convergence 2} $(v)$ we know moreover that 
$$
u^n\rightarrow  u
\quad
\text{weakly in $\bL_{r,q}$ for $r=p,2$ and all integers $q>1$},
$$
where $u$ is the unnormalised conditional density of $X$ given $Y$ 
from Theorem \ref{theorem Lp}, satisfying 
$$
\E\sup_{t\in [0,T]}|u_t|_{L_2}^2\leq N\E|\pi_0|_{L_2}^2,
$$
 with a constant 
$N=
N(d,p,T,K, K_{\xi}, K_{\eta}, L,\lambda,|\bar{\xi}|_{L_2},|\bar{\eta}|_{L_2})$ 
independent of $n$.  Moreover, $u$ is an $L_p$-solution to \eqref{equdZ} 
and by Theorem \ref{theorem Lp} (ii), it is the unique $L_2$-solution 
to \eqref{equdZ}. It remains to show that $u$ is also a $W^m_p$-solution 
to \eqref{equdZ}, as well as that it is strongly cadlag as $W^s_p$-valued process, 
for $s\in [0,m)$. To prove the former,
by \eqref{13.6.22.1} together with \eqref{13.6.22.3} 
we get that for $n$ sufficiently large,
\begin{equation}                                                                                                   \label{13.6.22.5}
\E|u_T^n|_{W^m_p}^p + \E\left(\int_0^T|u_t^n|^r_{W^m_p}\,dt \right)^{p/r}
\leq \E|u_T^n|_{W^m_p}^p +T^{p/r} 
\E\sup_{t\in [0,T]}|u_t^n|_{W^m_p}^p\leq 2N\E|\pi_0|_{W^m_p}^p.
\end{equation}
Hence we know that 
$$
u^n_T\to u_T,\quad\text{weakly in $\bW^m_p$ and}
\quad 
u^n\to u
\quad
\text{weakly in $\bW^m_{p,r}$ for any $r> 1$,}
$$
where $u$ satisfies for all $r\geq 1$,
$$
\E|u_T|_{W^m_p}^p 
+ \E\left(\int_0^T|u_t|^r_{W^m_p}\,dt \right)^{p/r}
\leq 2N\E|\pi_0|_{W^m_p}.
$$
Letting $r\to\infty$ above yields
$$
\E|u_T|_{W^m_p}^p 
+ \E\esssup_{t\in [0,T]}|u_t|^p_{W^m_p}
\leq 2N\E|\pi_0|_{W^m_p}.
$$
By Lemma \ref{lemma weakly cadlag} 
we then know that $u$ is weakly cadlag 
as an $W^m_p$-valued process, i.e. it is 
a $W^m_p$-solution to \eqref{equdZ}. 
Clearly, by Lemma \ref{lemma convergence 2}, 
also for each $t\in [0,T]$
$$
\pi_t(x) = P(X_t\in dx|\cF^Y_t)/dx = u_t(x){^o\!\gamma_t},
\quad
\text{almost surely},
$$
with ${^o\!\gamma}$ from Theorem \ref{theorem Lp}. 
We now show that if $m\geq 1$ and $K_1=0$, 
then $u$ is strongly cadlag as $W^s_p$-valued process 
for $s\in [0,m)$. 
To this and first we 
state $u$ is strongly cadlag as an $L_p$-valued 
process.

\begin{proposition}                                                    \label{proposition strongly cadlag}
Let Assumptions \ref{assumption SDE} 
through \ref{assumption SDE2} 
hold with $K_1=0$ and with $m=1$. 
Let $p\geq 2$ and 
let $u=(u_t)_{t\in [0,T]}$ be a $W^1_p$-solution to \eqref{equdZ}.  
Then $u$ is strongly cadlag as an $L_p$-valued process. 
\end{proposition}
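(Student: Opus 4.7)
The plan is to combine two facts. First, $u$ being a $W^1_p$-solution means it is weakly cadlag as a $W^1_p$-valued process, and through the continuous dense embedding $W^1_p\hookrightarrow L_p$ this transfers to weak cadlagness as an $L_p$-valued process. Second, I will prove that the real-valued path $t\mapsto |u_t|_{L_p}^p$ is almost surely cadlag on $[0,T]$. Once both are in hand, the Radon--Riesz property of the uniformly convex space $L_p$ ($p\geq 2$) upgrades weak convergence plus convergence of norms to strong convergence; applied on a single full-measure event to both right limits $t_n\downarrow t$ and left limits $t_n\uparrow t$, this yields the strong cadlagness of $u$ in $L_p$.

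To establish the norm cadlagness I would apply the It\^o formula of Lemma \ref{lemma Ito Lp} with $\alpha=0$ to $|u_t^{(\varepsilon)}|_{L_p}^p$, noting that the extra moment hypothesis \eqref{esssup condition} is vacuous since $K_1=0$. For each fixed $\varepsilon>0$ this produces an explicit cadlag semimartingale decomposition: an absolutely continuous drift assembled from the $\tilde{\mathcal L}^{*}$, $J^{\eta*}$ and $J^{\xi*}$ pieces together with the Brownian quadratic variation, a continuous It\^o integral against $V^k$, and a compensated Poisson integral against $\tilde N_1$. I would then let $\varepsilon\downarrow 0$ and show convergence in probability uniformly in $t\in[0,T]$. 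The Sobolev estimates of Section \ref{sec estimates} with $m=0$ (Lemmas \ref{lemma dpe1}, \ref{lemma pe5}, \ref{lemma dpe3}, \ref{lemma dpe4} together with Corollaries \ref{p3 cor lemma dpe1} and \ref{corollary dJ}) deliver $\varepsilon$-uniform pathwise bounds on the drift integrand of order $|u_s|_{L_p}^p$ and on the predictable quadratic variations of the two martingale parts of order $|u_s|_{W^1_p}^{2p}$ (the latter using the $W^1_p$-regularity of $u$ through the jump estimate of Lemma \ref{lemma dpe4}); these are pathwise integrable because $u$ is a $W^1_p$-solution and hence $\sup_{t\in[0,T]}|u_t|_{W^1_p}<\infty$ almost surely.

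Combining these bounds with the standard mollifier convergence $u_s^{(\varepsilon)}\to u_s$ in $W^1_p$ for almost every $(\omega,s)$, Lebesgue's dominated convergence on the drift, and the Davis--Burkholder--Gundy inequality on the two martingale parts, I would extract that $\sup_{t\in[0,T]}\bigl||u_t^{(\varepsilon)}|_{L_p}^p-M_t\bigr|\to 0$ in probability along a subsequence, where $M$ is a cadlag process; pointwise in $t$ the limit must equal $|u_t|_{L_p}^p$, giving the required cadlagness of the norm and closing the argument by the first paragraph. The main obstacle will be the uniform-in-$t$ passage to the limit in the compensated jump martingale $\int_0^t\int_{\frZ_1}\bigl((u_{s-}^{(\varepsilon)})^{p-1},(I^{\xi*}_s u_{s-})^{(\varepsilon)}\bigr)\,\tilde N_1(d\frz,ds)$: its integrand couples the nonlinearity $(u^{(\varepsilon)})^{p-1}$ with the mollified jump operator, so controlling the $\varepsilon$-difference of these integrands in the $L_2(\nu_1\otimes ds)$-norm demanded by Davis' inequality requires Lemma \ref{lemma dpe4} combined with a delicate argument transferring subsequential a.e.\ convergence $u^{(\varepsilon_n)}\to u$ in $W^1_p$ through the nonlinear map $v\mapsto v^{p-1}$.
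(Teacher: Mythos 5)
Your strategy is genuinely different from the paper's, and as written it has real gaps rather than just unfinished routine details. The paper does not mollify and pass to the limit at all: it rewrites \eqref{equZ} in the divergence form required by Theorem 2.2 of \cite{GW} (using Corollary \ref{corollary 1.2.10.22} to express the nonlocal terms through the operators $K^{\eta}_i$, $K^{\xi}_i$ and $I^{\xi*}$), verifies the pathwise $L_p$-integrability conditions \eqref{5.2.10.22}--\eqref{8.2.10.22} from $\sup_{t\le T}|u_t|_{W^1_p}<\infty$, and then cites that theorem, whose conclusion is precisely strong cadlagness in $L_p$. What you propose is, in effect, to re-prove the relevant case of that theorem: the step you yourself flag as ``the main obstacle'' --- the uniform-in-$t$ convergence of the compensated jump integrals and of the whole mollified semimartingale decomposition of $|u^{(\varepsilon)}_t|_{L_p}^p$ as $\varepsilon\downarrow0$ --- is not a technicality to be deferred; it is the substance of the cited result, and your sketch gives no argument for it beyond naming the lemmas of Section \ref{sec estimates} (which yield $\varepsilon$-uniform bounds, not the convergence of the differences of the integrands needed for the Davis inequality).

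There is a second, independent gap in the soft part of your argument. Granting that $t\mapsto|u_t|_{L_p}^p$ is indistinguishable from a cadlag process $M$, the Radon--Riesz argument gives right continuity, because there $M_t=|u_t|_{L_p}^p$ exactly. For left limits it does not close: for $t_n\uparrow t$ you have $u_{t_n}\rightharpoonup u_{t-}$ (the weak left limit) and $|u_{t_n}|_{L_p}^p\to M_{t-}$, but weak lower semicontinuity only gives $|u_{t-}|_{L_p}^p\le M_{t-}$, and uniform convexity upgrades nothing unless you prove equality. If the inequality were strict, $u$ would simply fail to have a strong left limit at $t$, so this case must be excluded; doing so requires identifying the jumps of $M$ with the jumps of $u$ (essentially that $u_t=T^{\xi*}_tu_{t-}$ at the atoms of $N_1$ and that $u$ is weakly left-continuous off them), which your proposal never addresses. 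These identifications, together with the uniform-in-time limit passage, are exactly what Theorem 2.2 of \cite{GW} packages; if you want an argument independent of that reference you must supply them, otherwise the efficient route is the paper's: put \eqref{equdZ} in divergence form via Corollary \ref{corollary 1.2.10.22}, check the integrability hypotheses, and quote the theorem.
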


\begin{proof}
We apply Theorem 2.2 in \cite{GW}. 
In order to do so, we rewrite equation \eqref{equZ} 
into the form used therein. 
Clearly, for $v\in W^1_p$ and $\varphi\in C^{\infty}_0$ 
we have 
$$
(v,\cM^k\varphi)=(\cM^{k\ast}v,\varphi)
\quad 
\text{with $\cM^{*k}v = -D_i(\rho^{ik}_sv) + B_s^kv$, \quad $k=1,2,...,d_1$} 
$$
and 
$$
(v,\tilde{\cL}_s\varphi) 
= -(D_j(a^{ij}_sv),D_i\vp) - (D_i(b^i_sv),\vp) + \beta_s^k(\cM^{*k}_sv,\vp),  
$$
Using Corollary \ref{corollary 1.2.10.22} with 
$\eta_t(\cdot,\frz)$ and $\xi_t(\cdot,\frz)$ in place of $\zeta$ 
we can see that 
$$
(v, J_t^{\eta}\varphi)=(K^{\eta_t}_iv,D_i\varphi), 
\quad
(v, J_t^{\xi}\varphi)=(K^{\xi_t}_iv,D_i\varphi)
\quad
\text{and}
\quad
(v,I_t^{\xi}\varphi)=(I_t^{\xi\ast}v,\varphi), 
$$
for $v\in W^1_p$ and $\varphi\in C_0^{\infty}$, 
where $K_{i}^{\eta_t}v$ and $K^{\xi_t}_iv$ are defined as $K_i^{\zeta}v$ 
in Corollary \ref{corollary 1.2.10.22} with $\zeta$ 
replaced with $\eta_t(\frz_0)$ and $\xi_t(\frz_1)$, respectively, (for $t\in [0,T]$, $\omega\in\Omega$, $\frz_0\in\frZ_0$, 
$\frz_1\in\frZ_1$, $i=1,\dots,d$),  and $I_t^{\xi\ast}$ is defined as $I^{\zeta\ast}$,  
with $\zeta$ replaced by $\xi_t(\frz_1)$. 
Thus for every $\vp\in C_0^\infty$ almost surely
$$
(u_t,\vp) = (\psi,\vp) - \int_0^t (D_j(a^{ij}_su_s),D_i\vp)\,ds 
- \int_0^t (D_i(b^i_su_s)+\beta^k_s\cM^{*k}_su_s,\vp)\,ds 
$$
$$
+ \int_0^t(\cM^{*k}_su_s,\vp)\,dV^k_s
+\int_0^t \int_{\frZ_0}(K_i^{\eta_s}u_s,D_i\vp)\,\nu_0(d\frz)ds 
$$
\begin{equation}                                                                      \label{9.2.10.22}
+\int_0^t\int_{\frZ_1}(K_i^{\xi_s}u_s,D_i\vp)\,\nu_1(d\frz)ds 
+\int_0^t\int_{\frZ_1}(I^{\xi*}_su_s,\vp)\,\Nte(d\frz,ds)
\end{equation}
for all $t\in [0,T]$.
It is easy to see that almost surely
$$
\int_0^T|D_i(a^{ij}_su_s)|_{L_p}^p\,ds<\infty,  
\quad
\int_0^T |D_i(b^i_su_s)+\beta^k_s\cM^{*k}_su_s|^p_{L_p}\,ds<\infty,   
$$
\begin{equation}                                                                \label{5.2.10.22}
\int_0^T\int_{\bR^d}\big(\sum_k|(\cM^{k*}_s u_s)(x)|^2\big)^{p/2}\,dxds<\infty. 
\end{equation}
By estimates \eqref{3.2.10.22} and \eqref{4.2.10.22}, for all $x\in\bR^d$ we have  
\begin{equation*}                                                        
|I^{\xi\ast}u(x)|
\leq N\bar\xi
\int_0^1|u(\tau^{-1}_{\theta\xi}(x))|+|(Du)(\tau^{-1}_{\theta\xi}(x))|\,d\theta, 
\end{equation*}
\begin{equation*}                                                         
|K_i^{\xi}u(x)|
\leq N\bar\xi^2
\int_0^1|u(\tau^{-1}_{\theta\xi}(x))|+|(Du)(\tau^{-1}_{\theta\xi}(x))|\,d\theta, 
\end{equation*}
\begin{equation*}                                                         
|K_i^{\eta}u(x)|
\leq N\bar\eta^2
\int_0^1|u(\tau^{-1}_{\theta\eta}(x))|+|(Du)(\tau^{-1}_{\theta\eta}(x))|\,d\theta
\end{equation*}
for every $\omega\in\Omega$, $s\in[0,T]$, $\frz_i\in\frZ_i$ (i=0,1), 
suppressed in these estimates, with a constant $N=N(d,\lambda,L,K_\eta,K_\xi)$
and with the $C^2$-diffeomorphisms
$$
\tau_{\theta\eta}(x)=x + \theta\eta(x)\quad\text{and}\quad
\tau_{\theta\xi}(x) = x + \theta\xi(x).
$$
Hence by Jensen's inequality, Fubini's theorem and Minkovski's inequality 
we get 
\begin{equation}                                                                    \label{6.2.10.22}
\int_0^T\int_{\bR^d}
\Big(\int_{\frZ_1}|I_s^{\xi\ast}u_s(x)|^2\nu_1(d\frz)\Big)^{p/2}\,dx\,ds
\leq 
N|\bar\xi|_{L_2(\frZ_1)}^p\int_0^T|u_s|^p_{W^1_p}ds<\infty \,{\rm(a.s.)}
\end{equation}
with a constant $N=N(p,d,\lambda, L,K_\eta,K_\xi)$. By Jensen's inequality and Fubini's 
theorem we obtain 
\begin{equation}
\int_0^T\int_{\bR^d}\int_{\frZ_1}|I_s^{\xi\ast}u_s(x)|^p\,\nu_1(d\frz)dx\,ds
\leq N|\bar\xi|^2_{L_2(\frZ_1)}\int_0^T|u_s|^p_{W^1_p}\,ds
<\infty \,{\rm(a.s.)},
\end{equation}
and for every $i=1,2,...,d$
\begin{equation}                                                          \label{7.2.10.22}
\int_0^T\int_{\bR^d}\int_{\frZ_1}|K_i^{\xi_s}u_s(x)|^p\,\nu_1(d\frz)dx\,ds
\leq N|\bar\xi|^2_{L_2(\frZ_1)}\int_0^T|u_s|^p_{W^1_p}\,ds
<\infty \,{\rm(a.s.)},
\end{equation}
\begin{equation}                                                             \label{8.2.10.22}
\int_0^T\int_{\bR^d}\int_{\frZ_1}|K_i^{\eta_s}u_s(x)|^p\,\nu_1(d\frz)dx\,ds
\leq N|\bar\xi|^2_{L_2(\frZ_1)}\int_0^T|u_s|^p_{W^1_p}\,ds
<\infty \,{\rm(a.s.)}
\end{equation}
with a constant $N=N(p,d,\lambda, L,K_\eta,K_\xi)$. 
Hence, by virtue of Theorem 2.2 in \cite{GW} we get from 
equation \eqref{9.2.10.22}, taking into account \eqref{5.2.10.22} 
through \eqref{8.2.10.22}, that $(u_t)_{t\in[0,T]}$ is 
strongly cadlag as an $L_p$-valued process.  
\end{proof}

By the above proposition $u$ is a strongly cadlag 
$L_p$-valued process, as well as weakly cadlag as an 
$W^m_p$-valued process. By interpolation we then 
have a constant $N=N(d,m,s,p)$ such that 
$$
|u_t-u_{t_n}|_{W^s_p}
\leq N |u_t-u_{t_n}|_{W^m_p}|u_t-u_{t_n}|_{L_p}
\leq 2N\zeta |u_t-u_{t_n}|_{L_p}, 
$$
$$
|u_{r_n}-u_{r-}|_{W^s_p}
\leq N |u_{r_n}-u_{r-}|_{W^m_p}|u_{r_n}-u_{r-}|_{L_p}
\leq 2N \zeta|u_{r_n}-u_{r-}|_{L_p}
$$
for any $t\in[0,T)$, $r\in(0,T]$, any strictly decreasing sequences $t_n\to t$  
and strictly increasing sequences $r_n\to r$ with $r_n,t_n\in(0,T)$, 
where $u_{r-}$ 
denotes the weak limit in $W^m_p$ of $u$ at $r$ from the left, and 
$\zeta:=\sup_{t\in [0,T]}|u_t|_{W^m_p}<\infty$ (a.s.). 
Letting here $n\to\infty$ we finish the proof.
\end{proof}

{\bf Acknowledgements.} The authors are very grateful to Nicolai Krylov, 
whose comments and suggestions greatly improved the presentation of the present article.

\end{document}